\DeclareMathOperator*{\argmin}{arg\,min}
\DeclareMathOperator{\crit}{crit}
\DeclareMathOperator{\conv}{conv}
\DeclareMathOperator{\diam}{diam}
\DeclareMathOperator{\Tr}{Tr}
\DeclareMathOperator{\e}{e}
\DeclareMathOperator{\hilbert}{\mathcal{H}}
\DeclareMathOperator{\Id}{Id}
\DeclareMathOperator{\pen}{pen}
\DeclareMathOperator{\penlin}{pen_{\ell}}
\DeclareMathOperator{\R}{\mathbb{R}}
\DeclareMathOperator{\X}{\mathcal{X}}
\newcommand{\A}{A}
\newcommand{\abs}[1]{\left\lvert#1\right\rvert}
\newcommand{\card}[1]{\left\lvert#1\right\rvert}
\newcommand{\cmax}{C_{\max}}
\newcommand{\cmin}{C_{\min}}
\newcommand{\condexpec}[2]{\mathbb{E}\left[#1\big|#2\right]}
\newcommand{\defeq}{\vcentcolon =}
\newcommand{\deltainf}{\underline{\Delta}}
\newcommand{\deltasup}{\overline{\Delta}}
\newcommand{\dhat}{D_{\tauhat}}
\newcommand{\distfrob}{\mathrm{d}_{\mathrm{F}}}
\newcommand{\disthaus}{\mathrm{d}_{\mathrm{H}}}
\newcommand{\distinf}{\mathrm{d}_{\infty}}
\newcommand{\dstar}{D_{\taustar}}
\newcommand{\dtau}{D_{\tau}}
\newcommand{\E}{\mathcal{E}}
\newcommand{\emprisk}{\widehat{\mathcal{R}}}
\newcommand{\eqdef}{= \vcentcolon}
\newcommand{\expec}[1]{\mathbb{E}\left[#1\right]}
\newcommand{\floor}[1]{\left\lfloor#1\right\rfloor}
\newcommand{\frobnorm}[1]{\norm{#1}_{\mathrm{F}}}
\newcommand{\gammainf}{\underline{\Gamma}}
\newcommand{\gaussian}{\mathcal{N}}
\newcommand{\hilbertinner}[2]{\left\langle#1,#2\right\rangle_{\hilbert}}
\newcommand{\hilbertnorm}[1]{\left\lVert#1\right\rVert_{\hilbert}}
\newcommand{\indic}[1]{\mathbf{1}_{\{#1\}}}
\newcommand{\inner}[2]{\left\langle#1,#2\right\rangle}
\renewcommand{\L}{L}
\newcommand{\lambdainf}{\underline{\Lambda}}
\newcommand{\lambdasup}{\overline{\Lambda}}
\newcommand{\lstar}{\lambda^{\star}}
\newcommand{\lzero}{\lambda^{\circ}}
\newcommand{\maxeps}{M_n}
\newcommand{\muhat}{\widehat{\mu}}
\newcommand{\mustar}{\mu^{\star}}
\newcommand{\norm}[1]{\left\lVert#1\right\rVert}
\newcommand{\opnorm}[1]{{\left\vert\kern-0.25ex\left\vert\kern-0.25ex\left\vert #1 
    \right\vert\kern-0.25ex\right\vert\kern-0.25ex\right\vert}}
\newcommand{\Q}{Q}
\newcommand{\segmentation}[1]{\left[#1\right]}
\newcommand{\set}[1]{\mathopen{} \left\{ #1 \right\} \mathclose{}}
\newcommand{\T}{\mathcal{T}}
\newcommand{\tauhat}{\widehat{\tau}}
\newcommand{\tauhatmom}{\tauhat_{2}} 
\newcommand{\tauzero}{\tau^{\circ}}
\newcommand{\redtauzero}{\widetilde{\tau}^{\circ}}
\newcommand{\taustar}{\tau^{\star}}
\newcommand{\redtaustar}{\widetilde{\tau}^{\star}}
\newcommand{\redmustar}{\widetilde{\mu}^{\star}}
\newcommand{\vitThmmoment}{v_2}
\newcommand{\vitThmbounded}{v_1}
\numberwithin{equation}{section}
\theoremstyle{plain}
\newtheorem{theorem}{Theorem}[section]
\newtheorem{proposition}{Proposition}[section]
\newtheorem{lemma}{Lemma}[section]
\newtheorem{corollary}{Corollary}[section]
\theoremstyle{definition}
\newtheorem{assumption}{Assumption}
\theoremstyle{remark}
\newtheorem{remark}{Remark}[section]
\newtheorem{example}{Example}[section]
\DeclareRobustCommand{\proba}[1]{\ensuremath{\mathbb{P}\left (#1\right )}} % proba
\newcommand{\dinf}{\distinf^{(1)}}
\newcommand{\dinfH}{\disthaus^{(1)}}
\newcommand{\dinfb}{\distinf^{(2)}}
\newcommand{\dinfbH}{\disthaus^{(2)}}
\newcommand{\dinfD}{\distinf^{(3)}}
   \def\MR#1{}
\begin{document}

\begin{frontmatter}

\begin{aug}
\title{Consistent change-point detection with kernels}
\runtitle{Change-point detection with kernels}

\author{\fnms{Damien} \snm{Garreau}\thanksref{t1}\ead[label=e1]{damien.garreau@ens.fr}}

\address{Centre de recherche Inria de Paris \\
2 rue Simone Iff \\
CS 42112 \\
75589 Paris Cedex 12 \\
\printead{e1}}

\vspace{0.4cm}

\and

\author{\fnms{Sylvain} \snm{Arlot}\ead[label=e2]{sylvain.arlot@math.u-psud.fr}}
\address{Laboratoire de Math\'ematiques d'Orsay \\
Univ. Paris-Sud, CNRS, Universit\'e Paris-Saclay, \\
91405 Orsay, France \\
\printead{e2}}

\thankstext{t1}{Corresponding author}

\runauthor{D. Garreau and S. Arlot}

\end{aug}

\begin{abstract}
In this paper we study the kernel change-point algorithm (KCP) proposed 
by Arlot, Celisse and Harchaoui \citep{Arl_Cel_Har:2012}, 
which aims at locating an unknown number of change-points in 
the distribution of a sequence of independent data taking values in an arbitrary set. 
The change-points are selected by model selection with a penalized 
kernel empirical criterion. 
We provide a non-asymptotic result showing that, 
with high probability, the KCP procedure retrieves the correct number 
of change-points, provided that the constant in the penalty is well-chosen; 
in addition, KCP estimates the change-points location at the optimal rate. %$\log(n)/n$. 
As a consequence, when using a characteristic kernel, 
KCP detects all kinds of change in the distribution (not only changes in the mean 
or the variance), 
and it is able to do so for complex structured data 
(not necessarily in $\R^d$). 
Most of the analysis is conducted assuming that the kernel is bounded; 
part of the results can be extended when we only assume a finite second-order moment.
\end{abstract}

\begin{keyword}[class=MSC]
\kwd[Primary ]{62M10}
\kwd[; secondary ]{62G20}
\end{keyword}

\begin{keyword}
\kwd{change-point detection}
\kwd{kernel methods}
\kwd{penalized least-squares}
\end{keyword}

% history:
\received{\smonth{6} \syear{2017}}

\end{frontmatter}

\section{Introduction}

In many situations, some properties of a time series change over time, 
such as the mean, the variance or higher-order moments.
Change-point detection is the long standing question of finding 
both the number and the localization of such changes.
This is an important front-end task in many applications.
For instance, detecting changes occuring in comparative genomic hybridization array data 
(CGH arrays) is crucial to the early diagnosis of cancer~\citep{Lai_Joh_Kuc:2005}.
In finance, some intensively examined time series like the volatility process 
exhibit local homogeneity and it is useful to be able to segment these time series 
both for modeling and forecasting~\citep{Lav_Tey:2006,Spo:2009}.
Change-point detection can also be used to detect changes 
in the activity of a cell~\citep{Rit_Raz_Ber:2002}, 
in the structure of random Markov fields~\citep{Liu_Suz_Rel:2014}, 
or a sequence of images~\citep{Kim_Mar_Per:2009,Abo_Gou_Blo:2015}. 
Generally speaking, it is of interest to the practitioner to segment a time series 
in order to calibrate its model on homogeneous sets of datapoints.

Addressing the change-point problem in practice requires to face 
several important challenges. 
First, the number of changes can not be assumed to be known in advance --- in particular, it can not be assumed to be equal to~$0$ or~$1$ ---, hence a practical change-point procedure 
must be able to infer the number of changes from the data. 
Second, changes do not always occur in the mean or the variance of the data, 
as assumed by most change-point procedures. 
We need to be able to detect changes in other features of the distribution. 
Third, parametric assumptions --- which are often made for building 
or for analyzing change-point procedures --- are 
often unrealistic, so that we need a fully non-parametric approach. 
Fourth, data points in the time series we want to segment 
can be high-dimensional and/or structured. 
If the dimensionality is larger than the number of observations, 
a non-asymptotic analysis is mandatory for theoretical results to be meaningful. 
When data are structured --- for instance, histograms, graphs or strings ---, 
taking their structure into account seems necessary for detecting 
efficiently the change-points. 

We focus only on the \emph{offline} problem in this article, that is, 
when all observations are given at once, 
as opposed to the situation where data come as a continuous stream.
We refer to \citet{Tar_Bas_Nik:2014} for an extensive review of sequential methods, 
which are adapted to the later situation. 
Numerous offline change-point procedures have been proposed 
since the seminal works of \citet{Pag:1955}, \citet{Fis:1958} and \citet{Bel:1961}, 
which are mostly parametric in essence.
We refer to \citet[Chapter 2]{Bro_Dar:2013} for a review of non-parametric offline change-point detection methods.
Among recent works in this direction, we can mention the Wild Binary Segmentation (WBS,~\citep{Fry:2014}) and the non-parametric multiple change-point detection procedure (NMCD,~\citep{Zou_Yin_Fen:2014}).
Some authors also consider the case of high-dimensional data when only 
a few coordinates of the mean change at each change-point \citep[and references therein]{Wan_Sam:2016}, 
or the problem of detecting gradual changes \citep{Vog_Det:2015}; 
this paper does not address these slightly different problems. 

To the best of our knowledge, no offline change-point procedure addressed simultaneously the four challenges 
mentioned above, until the kernel change-point procedure (KCP) 
was proposed by \citet{Arl_Cel_Har:2012}. 
In short, KCP mixes the penalized least-squares approach 
to change-point detection \citep{Com_Roz:2004,Leb:2005} with 
semi-definite positive kernels \citep{Aro:1950}. 
It is not the only procedure that uses positive semi-definite kernels to detect changes in a times series.
Apart from \citet{Har_Cap:2007}, who introduced KCP for a fixed number of change-point, 
and \citet{Arl_Cel_Har:2012} who extended KCP to an unknown number of change-points, 
we are aware of several closely related work.
Maximum Mean Discrepancy \citep[MMD,][]{Gre_Bor_Ras:2006} has been used for building two sample tests; % and independence tests \citep[HSIC,][]{Gre_Fuk_Teo:2007}
a block average version of the MMD, named the $M$-statistic, has lead to an online change-point detection procedure \citep{Li_Xie_Dai:2015}. 
A kernel-based statistic, named kernel Fisher discriminant ratio, has been used by \citet{Har_Mou_Bac:2009} 
for homogeneity testing  and for detecting one change-point. 
\citet{Sha_Tew_Wen:2016} build an analogue of the CUSUM statistic for Hilbert-valued random variables in order to detect a single change in the mean, and could be applied in our setting to the images of the observations in the feature space.
Kernel change detection~\citep{Des_Dav_Don:2005} is an online procedure that uses a kernel to build a dissimilarity measure between the near past and future of a data-point.

On the computational side, the KCP segmentation can be computed efficiently thanks to 
a dynamic programming algorithm \citep{Har_Cap:2007,Arl_Cel_Har:2012}, 
which can be made even faster \citep{Cel_Mor_Mar_Rig:2016}. 
An oracle inequality for KCP is proved by \citet{Arl_Cel_Har:2012}; 
this is not exactly a result on change-point estimation, but a guarantee 
on estimation of the ``mean'' of the time series 
in the RKHS associated with the kernel chosen. 
The good numerical performance of KCP in terms of change-point 
estimation is also demonstrated in several experiments. 

So, a key theoretical question remains open: 
does KCP estimate correctly the number of change-points 
and their locations with a large probability? 
If yes, at which speed does KCP estimate the change-point locations?

\medbreak

This paper answers these questions, showing that KCP has good theoretical properties 
for change-point estimation with independent data, under a boundedness assumption 
(Theorem~\ref{thm.bounded} in Section~\ref{sec:main:Dh}). 
This result is non-asymptotic, hence meaningful for high-dimensional or complex data. 
In the asymptotic setting --- with a fixed true segmentation and more and more data 
points observed within each segment ---, 
Theorem~\ref{thm.bounded} implies that KCP estimates consistently all changes 
in the ``kernel mean'' of the distribution of data, 
at speed $\log(n)/n$ with respect to the sample size $n$.
Since we make no assumptions on the minimal size of the true segments, 
this matches minimax lower bounds~\citep{Bru:2014}. 
We also provide a partial result under a weaker finite variance assumption 
(Theorem~\ref{th:localization-moment} in Section~\ref{sec:main:extension}) 
and explain in Section~\ref{sec:conclu} how our proofs could be extended 
to other settings, including the dependent case. 
These findings are illustrated by numerical simulations in Section~\ref{sec:simulation}.

An important case is when KCP is used with a characteristic kernel \citep{Fuk_etal:2008}, 
such as the Gaussian or the Laplace kernel. 
Then, any change in the distribution of data induces a change in the ``kernel mean''. 
So, Theorem~\ref{thm.bounded} implies that KCP then estimates  
consistently and at the minimax rate {\em all changes\/} in the distribution of the data, 
without any parametric assumption and without prior knowledge about the number of changes. 

Our results also are interesting regarding to the theoretical understanding 
of least-squares change-point procedures. 
Indeed, when KCP is used with the linear kernel, 
it reduces to previously known penalized least-squares change-point 
procedures \cite[for instance]{Yao:1988,Com_Roz:2004,Leb:2005}. 
There are basically two kinds of results on such procedures in the change-point literature: 
(i) asymptotic statements on change-point estimation 
\citep{Yao:1988,Yao_Au:1989,Bai_Per:1998,Lav_Mou:2000} 
and (ii) non-asymptotic oracle inequalities \citep{Com_Roz:2004,Leb:2005,Arl_Cel_Har:2012}, 
which are based upon concentration inequalities and model selection theory \citep{Bir_Mas:2001} 
but do not directly provide guarantees on the estimated change-point locations. 
Our results and their proofs show how to reconciliate the two approaches 
when we are interested in change-point locations, 
which is already new for the case of the linear kernel, 
and also holds for a general kernel. 

\section{Kernel change-point detection}

This section describes the general change-point problem and the 
kernel change-point procedure~\citep{Arl_Cel_Har:2012}. 

\subsection{Change-point problem}
\label{sec:setting}

Set~$2\leq n <+\infty$ and consider $X_1,\ldots,X_n$ independent $\X$-valued random variables, 
where~$\X$ is an arbitrary (measurable) space.
The goal of change-point detection is to detect abrupt changes in the distribution of the~$X_i$s.
For any $D\in\set{1,\ldots,n}$ and any integers $0=\tau_0 <\tau_1 <\cdots <\tau_D=n$, 
we define the {\em segmentation\/} $\tau\defeq\bigl[\tau_0,\ldots,\tau_D\bigr]$ of $\set{1,\ldots,n}$ 
as the collection of segments 
$\lambda_{\ell}=\set{\tau_{\ell-1}+1,\ldots,\tau_{\ell}}$, $\ell\in\set{1,\ldots,D}$.
We call {\em change-points\/} the right-end of the segments, 
that is the~$\tau_{\ell}$, $\ell\in\set{1,\ldots,D}$.
We denote by~$\T_n^D$ the set of segmentations with~$D$ segments and 
$\T_n\defeq\bigcup_{D=1}^n \T_n^D$ the set of all segmentations of~$\set{1,\ldots,n}$.
For any~$\tau\in\T_n$, we write~$\dtau$ for the number of segments of~$\tau$.
Figure~\ref{fig:segmentation-example} provides a visual example.

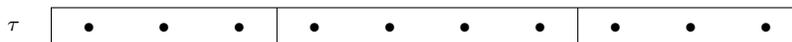
\begin{figure}[ht]
\centering
\begin{tikzpicture}
\def\offset{0}
\draw (0,0+\offset) rectangle (10,0.5+\offset) ;
\draw (3,0.5+\offset) -- (3,0+\offset) ;
\draw (7,0.5+\offset) -- (7,0+\offset) ;
\draw (0.5,0.23\offset) node {$\bullet$} ;
\draw (1.5,0.23+\offset) node {$\bullet$} ;
\draw (2.5,0.23+\offset) node {$\bullet$} ;
\draw (3.5,0.23+\offset) node {$\bullet$} ;
\draw (4.5,0.23+\offset) node {$\bullet$} ;
\draw (5.5,0.23+\offset) node {$\bullet$} ;
\draw (6.5,0.23+\offset) node {$\bullet$} ;
\draw (7.5,0.23+\offset) node {$\bullet$} ;
\draw (8.5,0.23+\offset) node {$\bullet$} ;
\draw (9.5,0.23+\offset) node {$\bullet$} ;
\draw (-0.5,0.25+\offset) node {$\tau$} ;
\end{tikzpicture}
\caption{We often represent the segmentations as above. 
The bullet points stand for the elements of~$\set{1,\ldots,n}$. 
Here, $n=10$, $D_{\tau}=3$, $\tau_0 = 0$, $\tau_1=3$, $\tau_2=7$ and $\tau_3 = 10$.}
\label{fig:segmentation-example}
\end{figure}

An important example to have in mind is the following. 
\begin{example}[Asymptotic setting]
\label{ex.asymptotic-setting}
Let $K \geq 1$, 
$0 = b_0 < b_1 < \cdots < b_K < b_{K+1} = 1$ 
and $P_1,\ldots,P_{K+1}$  some probability distributions on~$\X$ 
be fixed. 
Then, for any~$n$ and $i \in \{1, \ldots, n\}$, 
we set $t_i\defeq i/n$ and the distribution of $X_i$ is 
$P_{j(i)}$ where $j(i)$ is such that $t_i \in [b_j, b_{j+1})$. 
In other words, we have a fixed segmentation of $[0,1]$, 
given by the $b_j$, 
a fixed distribution over each segment, 
given by the $P_j$, 
and we observe independent realizations from the distributions 
at discrete times $t_1, \ldots, t_n$. 
The corresponding true change-points in $\{0 , \ldots, n\}$ 
are the $\floor{n b_j}$, $j=1, \ldots, K$. 
For~$n$ large enough, 
there are $K+1$ segments. 
Figure~\ref{fig:asymptotic} shows an example.
Let us emphasize that in this setting, $n$ going to infinity 
does not mean that new observations are observed over time. 
Recall that we consider the change-point problem {\em a posteriori\/}: 
a larger~$n$ means that we have been able to observe the phenomenon of interest 
with a finer time discretization. 
Also note that this asymptotic setting is restrictive in the sense that segments size asymptotically are of order $n$; 
we do not make this assumption in our analysis, which also covers asymptotic settings where some segments have a smaller size. 
\end{example}
\begin{figure}[!h]
\includegraphics[scale=0.5]{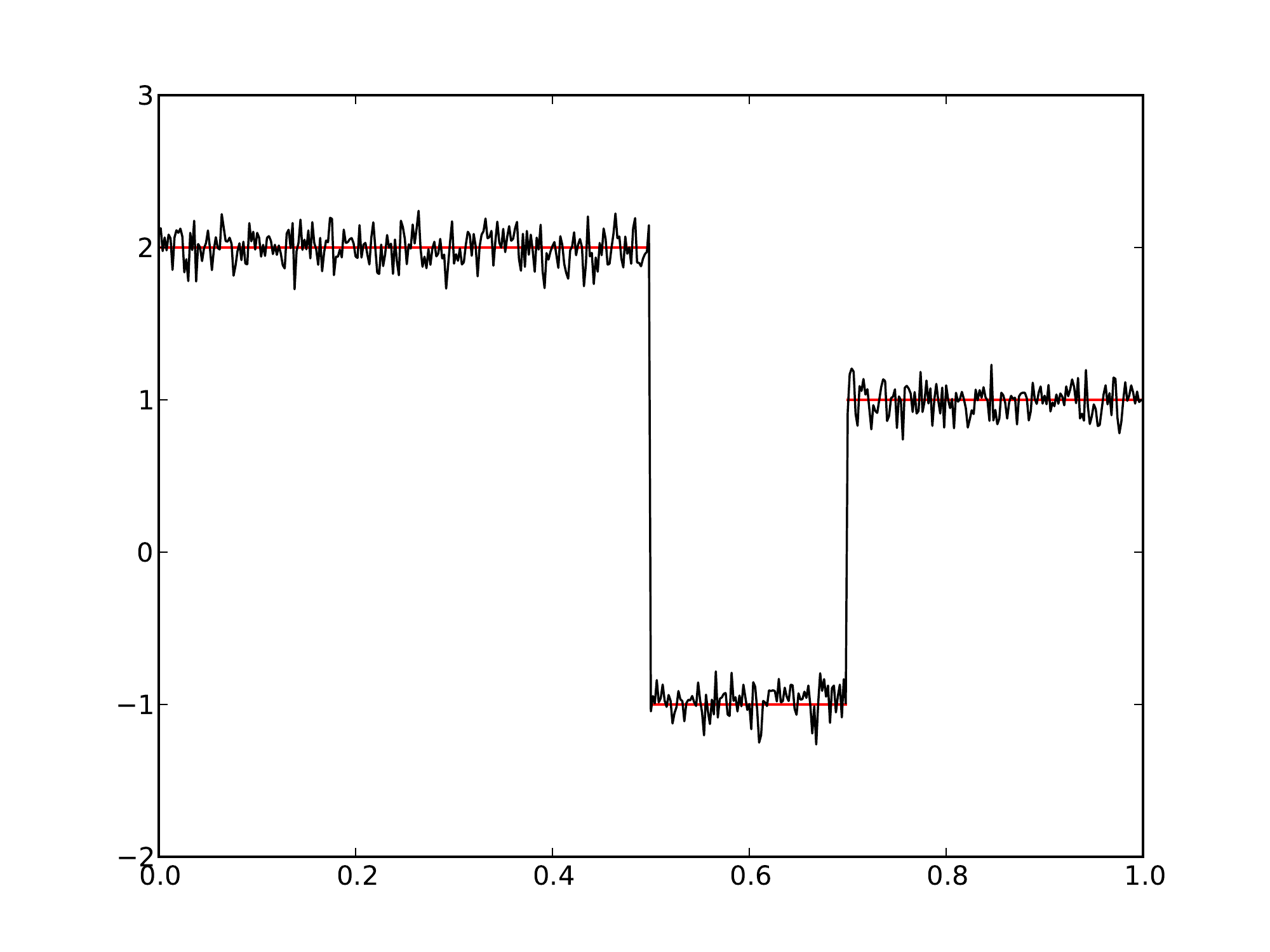}
\caption{\label{fig:asymptotic} 
Illustration of the asymptotic setting (Example~\ref{ex.asymptotic-setting}) 
in the case of changes in the mean of the $X_i$. 
Here, $\X=\R$, $X_i = f(t_i) + \varepsilon_i$ with $\varepsilon_1, \ldots, \varepsilon_n$ 
i.i.d. and centered, and $f:[0,1]\to\R$ is a (fixed) piecewise constant function 
(shown in red). 
The goal is to recover the number of abrupt changes of $f$ 
(here, $2$) 
and their locations ($b_1 = 0.5$ and $b_2 = 0.7$). 
Note that other kinds of changes in the distribution of the $X_i$ 
can be considered, 
see Section~\ref{sec:simulation}.  
}
\end{figure}

\subsection{Kernel change-point procedure (KCP)}
\label{sec:KCP}

Let $k:\X\times \X\to\R$ be a positive semidefinite kernel, that is, 
a measurable function such that the matrix $\left(k(x_i,x_j)\right)_{1\leq i,j\leq m}$ 
is positive semidefinite for any $m\geq 1$ and $x_1,\ldots,x_m\in\X$~\citep{Sch_Smo:2002}.
Classical examples of kernels are given by \cite[section~3.2]{Arl_Cel_Har:2012}, among which: 
\begin{itemize}
\item the {\em linear kernel\/}: 
$k^{\mathrm{lin}}(x,y) = \inner{x}{y}_{\R^p}$ 
for $x,y \in \X = \R^p$.  

\item the {\em polynomial kernel\/} of order $d \geq 1$: 
$k^{\mathrm{poly}}_d(x,y) = \bigl( \inner{x}{y}_{\R^p} + 1)^d$ 
for $x,y \in \X = \R^p$.  

\item the {\em Gaussian kernel\/} with bandwidth $h>0$:  
$k^{\mathrm{G}}_{h}(x,y) = \exp[ - \lVert x-y \rVert^2 / (2 h^2) ]$ 
for $x,y \in \X = \R^p$. 

\item the {\em Laplace kernel\/} with bandwidth $h>0$: 
$k^{\mathrm{L}}_{h}(x,y) = \exp [ - \lVert x-y \rVert / (2 h^2) ]$ 
for $x,y \in \X = \R^p$. 

\item the $\chi^2$-kernel: 
$k_{\chi^2} (x,y) = \exp\left( -\frac{1}{2}\sum_{i=1}^p \frac{( x_i - y_i )^2}{x_i+y_i} \right)$ 
for $x,y \in \X$ the $p$-dimensional simplex. 

\end{itemize}

As done by \citet{Har_Cap:2007} and \citet{Arl_Cel_Har:2012}, 
for a given segmentation~$\tau\in\T_n^D$, 
we assess the adequation of~$\tau$ with the \emph{kernel least-squares criterion} 
\begin{eqnarray}
\label{eq:def-empirical-risk}
\emprisk_n(\tau)
&\defeq& \frac{1}{n}\sum_{i=1}^n k(X_i,X_i) \\
&- &\frac{1}{n} \sum_{\ell=1}^D\left[\frac{1}{\tau_{\ell}-\tau_{\ell-1}} 
\sum_{i=\tau_{\ell-1}+1}^{\tau_{\ell}} \sum_{j=\tau_{\ell-1}+1}^{\tau_{\ell}}k(X_i,X_j)\right] 
\, .\notag
\end{eqnarray}
Elementary algebra shows that, when $\X=\R^p$ and $k = k^{\mathrm{lin}}$, 
$\emprisk_n$ is the usual least-squares criterion. 
Minimizing this criterion over the set of all segmentations always 
outputs the segmentation with~$n$ segments reduced to a point, 
that is $\segmentation{0,\ldots,n}$; this is a well-known overfitting phenomenon. 
To counteract this, a classical idea \cite[for instance]{Lav:2005} 
is to minimize a penalized criterion 
$\crit(\tau)\defeq \emprisk_n(\tau) + \pen(\tau)$, where $\pen :\T_n\to\R_+$ is called the penalty. 
Formally, the kernel change-point procedure (KCP) of \citet{Arl_Cel_Har:2012} 
selects the segmentation 
\begin{equation}
\label{eq:original-problem}
\tauhat \in\argmin_{\tau\in\T_n}\bigl\{ \crit(\tau)\bigr\} 
\qquad \text{where} \qquad 
\crit(\tau) = \emprisk_n(\tau)+\pen(\tau) 
\, .
\end{equation}
In this paper, we focus on the classical choice of 
a penalty proportional to the number of segments, similarly to AIC, BIC and~$C_p$ criteria. 
Namely, we consider
\begin{equation}
\label{eq:def-penalty-alt}
\pen(\tau) = \penlin(\tau)\defeq \frac{CM^2\dtau}{n} 
\, ,
\end{equation}
where $C$ is a positive constant and $M$ is specified in Assumption~\ref{assump:bounded-kernel} later on.
As mentioned in the Introduction, slightly different penalty shapes can be considered, 
as suggested by \citet{Arl_Cel_Har:2012}. 
Our results could be extended to the penalty of \citet{Arl_Cel_Har:2012}, 
but we choose to consider the linear penalty \eqref*{eq:def-penalty-alt} 
only for simplicity.

\subsection{The reproducing kernel Hilbert space}

Let~$\hilbert$ be the reproducing kernel Hilbert space (RKHS) associated 
with~$k$~\citep{Aro:1950}, 
together with the canonical feature map $\Phi :\X\to\hilbert$ 
\[\begin{array}{ccccc}
\Phi & : & \X & \to & \hilbert \\
 & & x & \mapsto & \Phi(x)\defeq k(\cdot,x) \, . 
\end{array}\]
We write~$\hilbertinner{\cdot}{\cdot}$ (resp.~$\hilbertnorm{\cdot}$) for the inner product 
(resp. the norm) of~$\hilbert$.
For any $i\in\set{1,\ldots,n}$, define $Y_i\defeq \Phi(X_i)\in\hilbert$. 
In the case where $k=k^{\mathrm{lin}}$, then $Y_i=\inner{\cdot}{X_i}_{\R^p}$ and the empirical risk 
$\emprisk_n$ reduces to the least-squares criterion
\[
\emprisk_n(\tau) 
= \frac{1}{n}\sum_{\ell =1}^{\dtau} \sum_{i=\tau_{\ell -1}+1}^{\tau_{\ell}} 
\left(X_i-\overline{X}_{\ell}\right)^2
\, ,
\]
where $\overline{X}_{\ell}$ is the empirical mean of the~$X_i$ 
over the segment $\set{\tau_{\ell-1}+1,\ldots,\tau_{\ell}}$.
It is well-known that penalized least-squares procedures detect changes in the mean of the observations~$X_i$, see \citet{Yao:1988}.
Hence the kernelized version of this least-squares procedure, KCP, should detect changes in the ``mean'' of the $Y_i = \Phi(X_i)$, 
which are a nonlinear transformation of the $X_i$.

More precisely, assume that $\hilbert$ is separable and that 
\[ 
\forall i\in\set{1,\ldots,n} , \qquad 
\expec{\sqrt{k(X_i,X_i)}}< +\infty
\, . 
\]
Then~$\mustar_i$, the Bochner integral of~$Y_i$, is well-defined~\citep{Led_Tal:2013}.
The condition above is satisfied in our setting 
(when either Assumption~\ref{assump:bounded-kernel} or Assumption~\ref{assump:bounded-variance} holds true, see Section~\ref{sec.KCP.assumptions}), 
and~$\hilbert$ is separable in most cases~\citep{Die_Bac:2014}. 
The Bochner integral commutes with continuous linear operators, hence the following property holds, which will be of common use:
\[
\forall g\in\hilbert,\quad \hilbertinner{\mustar_i}{g} 
= \mathbb{E} \bigl[ g(X_i) \bigr] 
= \mathbb{E} \bigl[ \hilbertinner{Y_i}{g} \bigr]
\, .
\]

\medbreak

We now define the ``true segmentation'' $\taustar \in \T_n$ by 
\begin{equation}
\label{def.taustar}
\begin{split}
\mustar_1 = \cdots = \mustar_{\taustar_1},
\quad 
\mustar_{\taustar_1 + 1} = \cdots = \mustar_{\taustar_2}, 
\quad \cdots \quad 
\mustar_{\taustar_{\dstar - 1} + 1} = \cdots =\mustar_n 
\\
%\quad 
\text{and} \quad 
\forall i \in \{ 1, \ldots, \dstar - 1\} , \quad \mustar_{\taustar_i}\neq\mustar_{\taustar_{i+1}} 
\end{split}
\end{equation}
with $1\leq \taustar_1 < \cdots < \taustar_{\dstar - 1} \leq n$. 
We call the~$\taustar_i$s the {\em true\/} change-points. 
It should be clear that it is always possible to define~$\taustar$.

A kernel is said to be characteristic if the mapping 
$P\mapsto \mathbb{E}_{X\sim P}\left[\Phi(X)\right]$ is injective, 
for~$P$ belonging to the set of Borel probability measures on~$\X$~\citep{Sri_Fuk_Gre:2009}.
In simpler terms, when $k$ is a characteristic kernel, $X_i$ and~$X_{i+1}$ 
have the same distribution if and only if 
$\mustar_i = \mustar_{i+1}$, 
and~$\taustar$ indeed corresponds to the set of changes in the 
distribution of the $X_i$. 
For instance, all strictly positive definite kernels are characteristic, including the Gaussian kernel, see \citet{Sri_Fuk_Gre:2009}.
Therefore, in the setting of Example~\ref{ex.asymptotic-setting}, for $n$ large enough, 
$\dstar = K+1$ and $\taustar_{\ell} = \floor{n b_{\ell}}$ for 
$\ell = 1, \ldots, K$. 

For a general kernel, some changes of~$P_{X_i}$, the distribution of $X_i$, 
might not appear in~$\taustar$. 
For instance, with the linear kernel,~$\taustar$ only corresponds to changes 
of the mean of the~$X_i$. 
In most cases, a characteristic kernel is known 
and we can choose to use KCP with a characteristic kernel; 
then, as we prove in the following, KCP eventually detects any change in the distribution 
of the observations. 
But one can also choose a non-characteristic kernel on purpose, 
hence focusing only on some changes in the distribution of the~$X_i$.
For instance, the polynomial kernel of order $d$ is not characteristic 
and leads to the detection of changes in the first $d$ moments of the distribution; 
with the linear kernel, KCP detects changes in the mean of the~$X_i$. 

From now on, we focus on the problem of detecting the changes of~$\taustar$ only, 
whether the kernel is characteristic or not.

\subsection{Rewriting the empirical risk}
\label{sec:rewriting-emp-risk}

It is convenient to see the images of the observations by the feature map as an element of~$\hilbert^n$.
To this extent, we define $Y\defeq (Y_1,\ldots,Y_n)$, as well as 
$\mustar\defeq (\mustar_1,\ldots,\mustar_n)\in\hilbert^n$ 
and $\varepsilon\defeq Y-\mustar\in\hilbert^n$.
We identify the elements of~$\hilbert^n$ with the set of applications 
$\{1,\ldots,n\} \to \hilbert$, naturally embedded with 
the inner product and norm given by 
\[
\forall x,y\in\hilbert^n,\qquad 
\inner{x}{y} 
\defeq \sum_{i=j}^n \hilbertinner{x_j}{y_j} 
\qquad \text{and}\qquad 
\norm{x}^2\defeq \sum_{j=1}^n \hilbertnorm{x_j}^2 
\, .\]
We now rewrite the empirical risk 
as a function of~$\tau$ and~$Y$.
For any segmentation~$\tau\in\T_n$, define~$F_{\tau}$ the set of applications  
$\{1,\ldots,n\} \to \hilbert$ that are constant over the segments of~$\tau$.
We see~$F_{\tau}$ as a subspace of~$\hilbert^n$ as a vector space.
Take $f\in\hilbert^n$, we define $\Pi_{\tau} f$ the orthogonal projection of~$f$ 
onto~$F_{\tau}$ with respect to~$\norm{\cdot}$: 
\[
\Pi_{\tau} f \in\argmin_{g\in F_{\tau}}\norm{f-g}
\, .
\]
It is shown by \citet{Arl_Cel_Har:2012} that for any $f\in\hilbert^n$ and any $\ell \in \{1, \ldots, \dtau\}$, 
\begin{equation}
\label{eq:computation-projection}
%%\forall 1\leq \ell\leq \dtau, \;
\forall i \in \{ \tau_{\ell-1} + 1 , \ldots,  \tau_{\ell} \}, 
\qquad 
(\Pi_{\tau} f)_i 
= \frac{1}{\abs{\tau_{\ell}-\tau_{\ell-1}}}\sum_{j=\tau_{\ell-1}+1}^{\tau_{\ell}}f_j
\, .
\end{equation}

We are now able to write the empirical risk as
\begin{equation}
\label{eq:empirical-risk-alt}
\emprisk_n(\tau) = \frac{1}{n}\norm{Y-\muhat_{\tau}}^2,
\end{equation}
where $\muhat_{\tau} = \Pi_{\tau}Y$, following \citep{Har_Cap:2007,Arl_Cel_Har:2012}.

\subsection{Assumptions}
\label{sec.KCP.assumptions}

A key ingredient of our analysis is the concentration of~$\varepsilon$. 
Intuitively, the performance of KCP is better 
when~$\varepsilon$ concentrates strongly around its mean, 
since without noise we are just given the task to segment a piecewise-constant signal.
It is thus natural to make assumptions on~$\varepsilon$ 
in order to obtain concentration results.
We actually formulate assumptions on the kernel~$k$, 
which translate automatically onto~$\varepsilon$.

As done by \citet{Arl_Cel_Har:2012}, the main hypothesis used in our analysis is the following. 
\begin{assumption}
\label{assump:bounded-kernel}
A positive constant~$M$ exists such that
\[
\forall i \in \{1, \ldots, n\} , \qquad 
k(X_i,X_i)\leq M^2 < +\infty 
\qquad \text{a.s.} 
\]
\end{assumption}
If Assumption~\ref{assump:bounded-kernel} holds true, 
\[
\forall i \in \{1, \ldots, n\} , \qquad 
\hilbertnorm{Y_i} = \sqrt{k(X_i,X_i)} \leq M 
\qquad \text{a.s.} 
\]
and \citet{Arl_Cel_Har:2012} show that  
$\hilbertnorm{\varepsilon_i} \leq 2M$ almost surely. 

Assumption~\ref{assump:bounded-kernel} is always satisfied for a large class of commonly used kernels, 
such as the Gaussian, Laplace and $\chi^2$ kernels.

Note that Assumption~\ref{assump:bounded-kernel} is weaker than 
assuming $k$ to be bounded --- that is, $k(x,x)\leq M$ for any $x\in\X$, 
which is equivalent to $k(x,x')\leq M$ for any $x,x'\in\X$ since~$k$ is positive definite.
For instance, if $\X = \R^p$ and the data $X_i$ are bounded almost surely, 
Assumption~\ref{assump:bounded-kernel} holds true for the linear kernel  
and all polynomial kernels, which are not bounded on $\R^p$. 

In the setting of Example~\ref{ex.asymptotic-setting}, 
Assumption~\ref{assump:bounded-kernel} holds true when 
\[ 
\forall j \in \{1, \ldots, K \}, \qquad 
k(x,x) \leq M^2 \qquad \text{for $P_j$-a.e. } x \in \X 
\, . 
\]

\medbreak

It is sometimes possible to weaken Assumption~\ref{assump:bounded-kernel} into a finite variance assumption. 
\begin{assumption}
\label{assump:bounded-variance}
A positive constant~$V < +\infty$ exists such that
\[\max_{1\leq i\leq n} \expec{\hilbertnorm{\varepsilon_i}^2}\leq V.\]
\end{assumption}
Since $v_i\defeq \mathbb{E} [ \hilbertnorm{\varepsilon_i}^2 ] 
=\expec{k(X_i,X_i)}-\hilbertnorm{\mustar_i}^2$, Assumption~\ref{assump:bounded-variance} holds true when 
\[
\forall i\in\set{1,\ldots,n}, \qquad 
\mathbb{E} \bigl[ k(X_i,X_i) \bigr] \leq V
\, . 
\]
As a consequence, Assumption~\ref{assump:bounded-kernel} implies Assumption~\ref{assump:bounded-variance} with~$V=M^2$. 
Note that Assumption~\ref{assump:bounded-variance} is satisfied for 
the polynomial kernel of order $d$ 
%% $k^{\mathrm{poly}}_d(x,y) = \bigl( \inner{x}{y}_{\R^p} + 1)^d$  
provided that
\[\forall i\in\set{1,\ldots,n},\quad \expec{\norm{X_i}^{2d}}<+\infty.\]

In the setting of Example~\ref{ex.asymptotic-setting}, 
Assumption~\ref{assump:bounded-variance} holds true with 
\[
V = \max_{ 1 \leq \ell \leq K+1} \mathbb{E}_{X \sim P_{\ell}} \bigl[ k(X, X) \bigr] 
\, , 
\]
provided this maximum is finite.

\section{Theoretical guarantees for KCP}
\label{sec:main} 
We are now able to state our main results. 
In Section~\ref{sec:main:Dh}, we state the main result of the paper, 
Theorem~\ref{thm.bounded}, which provides simple conditions under which 
KCP recovers the correct number of segments 
and localizes the true change-points with high probability, 
under the bounded kernel Assumption~\ref{assump:bounded-kernel}. 
Then, Section~\ref{sec:main:metric} details a few classical losses between segmentations 
which can be considered in addition to the one used in Theorem~\ref{thm.bounded}. 
Corollary~\ref{cor:Frobenius.bounded} formulates a result on $\tauhat$ 
in terms of the Frobenius loss. 
Finally, Section~\ref{sec:main:extension} states a partial result on KCP 
--- requiring the number of change-points $\dstar$ to be known --- 
under the weaker Assumption~\ref{assump:bounded-variance}.

\subsection{Main result}
\label{sec:main:Dh} 
We first need to define some quantities. 
The size of the smallest jump of $\mustar$ in~$\hilbert$ is
\begin{equation}
\label{eq:def-deltainf}
\deltainf 
\defeq \min_{i \,/\, \mustar_i\neq \mustar_{i+1}} 
\hilbertnorm{\mustar_i - \mustar_{i+1}} \, .
\end{equation}
Intuitively, the higher~$\deltainf$ is, the easier it is to detect the smallest jump with our procedure. 
The quantity $\hilbertnorm{\mustar_i - \mustar_{i+1}}$ is often called the (population) maximum mean discrepancy 
\citep[MMD,][]{Gre_Bor_Ras:2006} between the distributions of $X_i$ and $X_{i+1}$. 
In the scalar setting (with the linear kernel), 
the ratio $\deltainf/\sigma$ (where $\sigma^2$ is the variance of the noise) is called the \textit{signal-to-noise ratio}~\citep{Bas_Nik:1993} 
and is often used as a measure of the magnitude of a change in the signal. 
In Example~\ref{ex.asymptotic-setting}, 
\[
\deltainf 
= \min_{1 \leq j \leq K} \hilbertnorm{\mustar_{P_j} - \mustar_{P_{j+1}}} 
\]
where $\mustar_{P_j}$ denotes the (Bochner) expectation of $\Phi(X)$ when $X \sim P_j$.

For any $\tau\in\T_n$, we denote the (normalized) sizes of 
its smallest and of its largest segment by
\begin{equation}
\label{eq:def-lambdainf}
\lambdainf_{\tau}\defeq \frac{1}{n}\min_{1\leq \ell \leq \dtau}\card{\tau_{\ell}-\tau_{\ell-1}} 
\qquad \text{and}\qquad 
\lambdasup_{\tau}\defeq \frac{1}{n}\max_{1\leq\ell\leq \dtau}\card{\tau_{\ell}-\tau_{\ell-1}} 
\, .
\end{equation}
It should be clear that the smaller $\lambdainf_{\taustar}$ is, the harder it is to detect 
the segment that achieves the minimum in \eqref{eq:def-lambdainf}.
For instance, in the particular case of Example~\ref{ex.asymptotic-setting}, 
\[ 
\lambdainf_{\taustar} \xrightarrow[n \to +\infty]{} 
\min_{0 \leq j \leq K} \lvert b_{j+1} - b_j \rvert 
\qquad \text{and} \qquad 
\lambdasup_{\taustar} \xrightarrow[n \to +\infty]{} 
\max_{0 \leq j \leq K} \lvert b_{j+1} - b_j \rvert 
\, . 
\]

For any~$\tau^1$ and~$\tau^2\in\T_n$, we define 
\begin{align*}
\dinf(\tau^1,\tau^2) 
&\defeq \max_{1\leq i\leq D_{\tau^1} - 1} \biggl\{ \min_{1\leq j\leq D_{\tau^2}-1} 
\abs{\tau_i^1 - \tau_j^2} \biggr\} 
\, ,
\end{align*}
which is a loss function (a measure of dissimilarity) 
between the segmentations $\tau^1$ and $\tau^2$. 
Note that $\dinf$ is not a distance; 
other possible losses between segmentations and their relationship with $\dinf$ 
are discussed in Section~\ref{sec:main:metric}. 

\begin{theorem}
\label{thm.bounded}
Suppose that Assumption~\ref{assump:bounded-kernel} holds true.
For any $y>0$, an event $\Omega$ of probability at least $1-\e^{-y}$ 
exists on which the following holds true. 
For any $C>0$, 
let $\tauhat$ be defined as in \eqref[name=Eq.~]{eq:original-problem} 
with~$\pen$ defined by \eqref[name=Eq.~]{eq:def-penalty-alt}. 
Set 
\[
\cmin \defeq \frac{74}{3}(\dstar+1)(y+\log n + 1)\quad\text{and}\quad\cmax \defeq \dfrac{\deltainf^2}{M^2} 
\frac{\lambdainf_{\taustar} }{6 \dstar} n 
\, .
\]
Then, if 
\begin{equation}
\label{eq:hyp-main-result-synth}
\cmin < C < \cmax 
\, , 
\end{equation}
on $\Omega$, we have 
\[
D_{\tauhat} = \dstar 
\qquad \text{and} \qquad 
\frac{1}{n}\dinf\bigl(\taustar,\tauhat\bigr)
\leq  \vitThmbounded(y) 
\defeq 
\frac{148 \dstar M^2}{ \deltainf^2} \cdot \frac{y+\log n + 1}{n} 
\, .
\]
\end{theorem}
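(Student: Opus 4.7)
The plan is to follow the standard penalized model selection argument to convert $\crit(\tauhat) \leq \crit(\taustar)$ into a controlled ``bias'' bound, then translate that bound into change-point localization via a dedicated lower bound on $\norm{\mustar - \Pi_\tau \mustar}^2$. With $\varepsilon = Y - \mustar$ and $\mustar \in F_{\taustar}$, the Pythagorean identity in $\hilbert^n$ applied to the orthogonal projection $\Pi_\tau$ gives
\[
n \bigl[\emprisk_n(\tau) - \emprisk_n(\taustar)\bigr] = \norm{\mustar - \Pi_\tau \mustar}^2 + 2 \inner{\mustar - \Pi_\tau \mustar}{\varepsilon} + \norm{\Pi_{\taustar}\varepsilon}^2 - \norm{\Pi_\tau \varepsilon}^2 ,
\]
and applying this with $\tau = \tauhat$ together with $\crit(\tauhat) \leq \crit(\taustar)$ yields the core inequality
\[
\norm{\mustar - \Pi_{\tauhat}\mustar}^2 + 2 \inner{\mustar - \Pi_{\tauhat}\mustar}{\varepsilon} + \norm{\Pi_{\taustar}\varepsilon}^2 - \norm{\Pi_{\tauhat}\varepsilon}^2 \leq C M^2 (\dstar - \dhat) .
\]

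The first substantial step is to build the good event $\Omega$. Since Assumption~\ref{assump:bounded-kernel} forces $\hilbertnorm{\varepsilon_i} \leq 2M$ almost surely, a Pinelis--Hoeffding inequality for bounded independent sums in $\hilbert$ applied segment-by-segment controls $\norm{\Pi_\tau \varepsilon}^2$ for each fixed $\tau$ around a mean of order $M^2 \dtau$. A union bound over $\tau \in \T_n$, exploiting $\card{\T_n^D} \leq \binom{n-1}{D-1}$ so that $\log \card{\T_n^D} \lesssim D \log n$, produces on an event of probability at least $1 - \e^{-y}$ a uniform inequality of the form $\norm{\Pi_\tau \varepsilon}^2 \leq c_1 M^2 \dtau (y + \log n + 1)$, together with a companion bound on the linear cross term $\abs{\inner{\mustar - \Pi_\tau \mustar}{\varepsilon}}$ obtained by Cauchy--Schwarz and a peeling argument over the unknown scale $\norm{\mustar - \Pi_\tau \mustar}$. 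Sharpening these uniform controls enough to match the stated constants $74/3$ and $148$ is the main technical obstacle of the proof.

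Next I rule out $\dhat \neq \dstar$ on $\Omega$. If $\dhat > \dstar$, the right-hand side of the core inequality is $\leq -CM^2$, whereas $\Omega$ bounds its left-hand side below by $-c_1 M^2 \dhat (y + \log n + 1)$; once constants are tracked, the hypothesis $C > \cmin$ forces a contradiction. If instead $\dhat < \dstar$, at least one segment of $\tauhat$ strictly contains a full segment of $\taustar$, and an elementary within-segment variance computation using $\deltainf$ and $\lambdainf_{\taustar}$ yields the bias lower bound
\[
\norm{\mustar - \Pi_{\tauhat}\mustar}^2 \geq c_2\, n\, \lambdainf_{\taustar}\, \deltainf^2\, (\dstar - \dhat) / \dstar ,
\]
which inserted into the core inequality beats $CM^2(\dstar - \dhat)$ plus the concentration slack as soon as $C < \cmax$, again a contradiction. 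Hence $\dhat = \dstar$ on $\Omega$.

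Finally, I localize. Once $\dhat = \dstar$, the penalty difference in the core inequality vanishes and the concentration on $\Omega$ turns it into $\norm{\mustar - \Pi_{\tauhat}\mustar}^2 \leq c_3 M^2 \dstar (y + \log n + 1)$. Conversely, for any $\tau$ with $\dtau = \dstar$, a direct computation shows that if some change-point $\tauhat_i$ lies at distance $r$ from the closest true change-point $\taustar_j$, that single mismatch already contributes at least $\tfrac{1}{2} r\, \deltainf^2$ to $\norm{\mustar - \Pi_\tau \mustar}^2$. Combining the upper and lower bounds and maximizing over $i$ converts the bias bound into $n^{-1} \dinf(\taustar, \tauhat) \leq \vitThmbounded(y)$, closing the proof.
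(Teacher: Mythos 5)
Your overall architecture --- comparing $\crit(\tauhat)\leq\crit(\taustar)$, building a uniform concentration event by a union bound over $\T_n$ with $\log\card{\T_n^D}\lesssim D\log n$, ruling out $\dhat\neq\dstar$ via a lower bound on the approximation error, then localizing via a second such lower bound --- is exactly the paper's strategy, and your core inequality and construction of $\Omega$ match the paper's treatment of the linear and quadratic terms. One bookkeeping point in the $\dhat>\dstar$ step: you must retain the factor $\dhat-\dstar$ on the right-hand side, since comparing $-CM^2$ alone against a concentration slack of order $M^2\dhat(y+\log n+1)$ cannot give a contradiction when $\dhat$ is as large as $n$; the paper's version of this step reads $CD_{\taustar}M^2\geq(C-\tfrac{74}{3}\lambda)D_{\tauhat}M^2$. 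The genuine gaps are in the two deterministic lower bounds on $\A_{\tau}=\norm{\mustar-\mustar_{\tau}}^2$, which are precisely the nontrivial lemmas of the paper. For $\dhat<\dstar$: it is true that some segment of $\tauhat$ strictly contains a full segment of $\taustar$, but the within-segment variance computation on that one segment does not give anything of order $n\lambdainf_{\taustar}\deltainf^2$. If $\lambda\supsetneq\lstar_i$ but $\lambda$ overlaps the neighbouring true segment by a single point, its contribution is $\tfrac{\card{\lstar_i}}{\card{\lstar_i}+1}\hilbertnorm{\mustar_{\lstar_{i+1}}-\mustar_{\lstar_i}}^2\approx\deltainf^2$, smaller than the target by a factor $n\lambdainf_{\taustar}$; the missing mass is only recovered by aggregating contributions across several segments. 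The paper proves $\tfrac1n\A_{\tau}\geq\tfrac12\lambdainf_{\taustar}\deltainf^2$ for every $\tau$ with $\dtau<\dstar$ by a strong induction on $\dstar$ together with a telescoping argument over overlap fractions (Lemma~\ref{lemma:approx-error-minoration:optimal-bound}); your local argument, as stated, does not close.

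The localization step has a second, more serious problem: the inequality is stated in the wrong direction and omits a necessary truncation. The theorem bounds $\dinf(\taustar,\tauhat)=\max_i\min_j\abs{\taustar_i-\tauhat_j}$, i.e.\ every \emph{true} change-point must have a nearby estimated one, whereas your lower bound is phrased for an \emph{estimated} change-point far from all true ones, which controls $\dinf(\tauhat,\taustar)$. These can differ even when $\dhat=\dstar$: two estimated change-points can cluster near one true change-point, leaving another true change-point uncovered while $\dinf(\tauhat,\taustar)$ stays small (e.g.\ $\taustar=[0,33,66,100]$, $\tauhat=[0,32,34,100]$ gives $\dinf(\tauhat,\taustar)=1$ but $\dinf(\taustar,\tauhat)=32$), and swapping the two via Lemma~\ref{lemma:equality-distances}(ii) requires exactly the bound you are trying to establish. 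Moreover the claimed inequality $\A_{\tau}\geq\tfrac12 r\deltainf^2$ is false without a $\min$ with $n\lambdainf_{\taustar}$: a segment of $\tau$ straddling $\taustar_j$ with $r$ points on one side and $s\ll r$ on the other contributes only $\tfrac{rs}{r+s}\deltainf^2\approx s\deltainf^2$. The correct statement is the paper's Lemma~\ref{lemma:approx-error-minoration-2}, $\tfrac1n\A_{\tau}\geq\tfrac12\min\bigl\{\lambdainf_{\taustar},\tfrac1n\dinf(\taustar,\tau)\bigr\}\deltainf^2$, and discharging the $\min$ requires the further observation that $\cmin<\cmax$ forces $\lambdainf_{\taustar}>\vitThmbounded(y)$ --- a step absent from your sketch.
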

We delay the proof of Theorem~\ref{thm.bounded} to Section~\ref{sec:proof-main-result}. 
Some remarks follow.

\medbreak

Theorem~\ref{thm.bounded} is a non-asymptotic result: 
it is valid for any $n \geq 1$ and there is nothing hidden 
in $\mathrm{o}(1)$ remainder terms. 
The latter point is crucial for complex data --- for instance, 
$\X = \R^p$ with $p > n$ --- since in this case, 
assuming $\X$ fixed while $n \rightarrow +\infty$ is not realistic. 

Nevertheless, it is useful to write down what Theorem~\ref{thm.bounded} 
becomes in the asymptotic setting of Example~\ref{ex.asymptotic-setting}. 
As previously noticed, $\dstar$, $\lambdainf_{\taustar}$, $\deltainf^2$ 
and $M^2$ then converge to positive constants as $n \rightarrow + \infty$. 
Therefore, $\cmin$ is of order $\log(n)$, 
$\cmax$ is of order $n$ and we always have $\cmin < \cmax$ for $n$ large enough. 
The upper bound on $C$ matches classical asymptotic conditions 
for variable selection \citep{Sha:1997}. 
The necessity of taking $C$  of order at least $\log(n)$ is 
shown by \citet{Bir_Mas:2006} in a variable selection setting, 
which includes change-point detection as a particular example; 
\citet{Bir_Mas:2006,Abr_etal:2006} provide several arguments 
for the optimality of taking a constant $C$ of order $\log(n)$. 
When~$C$ satisfies \eqref*{eq:hyp-main-result-synth}, 
the result of Theorem~\ref{thm.bounded} implies that 
$\proba{\dhat=\dstar}\to 1$. 
For the linear kernel in $\R^d$, this is a well-known result 
when the distribution of the $X_i$ changes only through its mean. 
The first result dates back to \citet[Section~2]{Yao:1988} for a Gaussian noise, 
later extended by \citet{Liu_Wu_Zid:1997} and \citet[Section~3.1]{Bai_Per:1998} 
under mixingale hypothesis on the error, 
and \citet{Lav_Mou:2000} under very mild assumptions satisfied for 
a large family of zero-mean processes 
\citep[for the precise statement of the hypothesis, see][Section~2.1]{Lav_Mou:2000}. 
Theorem~\ref{thm.bounded} also shows that the normalized estimated change-points of $\tauhat$ 
converge towards the normalized true change-points at speed at least $\log(n)/n$. 

Up to a logarithmic factor, this speed matches the minimax lower bound $n^{-1}$ 
which has been obtained previously for various change-point procedures 
\citep[for instance]{Kor:1988,Boy_etal:2006,Kor_Tsy:2012} 
including least-squares \citep{Lav_Mou:2000}, 
assuming that $\lambdainf_{\taustar} \geq \kappa >0$. 
When $\dstar \geq 3$ and the assumption on $\lambdainf_{\taustar}$ is removed 
---that is, segments of length much smaller than $n$ are allowed, 
which is compatible with Theorem~\ref{thm.bounded} since it is non-asymptotic---, 
\citet[Theorem~6]{Bru:2014} shows a minimax lower bound of order $\log(n)/n$. 
Therefore, in this setting, KCP achieves the minimax rate. 
We do not know whether KCP remains minimax optimal (without the $\log$ factor) 
under the assumption $\lambdainf_{\taustar} \geq \kappa >0$. 

Note finally that KCP also performs well for finite samples, 
according to the simulation experiments of \citet{Arl_Cel_Har:2012}. 

\medbreak

Theorem~\ref{thm.bounded} emphasizes the key role of $\deltainf^2/M^2$, 
which can be seen as a generalization of the signal-to-noise ratio, 
for the change-point detection performance of KCP. 
The larger is this ratio, the easier it is to have \eqref[name=Eq.~]{eq:hyp-main-result-synth} 
satisfied and the smaller is $\vitThmbounded(y) $. 
This suggests to choose $k$ (theoretically at least) by maximizing 
$\deltainf^2/M^2$, as we discuss in Section~\ref{sec:conclu}. 
Note that $\deltainf^2/M^2$ is invariant by a rescaling of $k$, 
hence the result of Theorem~\ref{thm.bounded} is unchanged when $k$ is rescaled.

\medbreak

The hypothesis in \eqref[name=Eq.~]{eq:hyp-main-result-synth} is actually three-fold. 
First, we use that $C > \cmin$ to get $D_{\tauhat} \leq \dstar$. 
We have to assume $C$ large enough since a too small penalty 
leads to selecting (with KCP or any other penalized least-squares procedure) 
the segmentation with~$n$ segments, that is~$\dhat=n$. 
Second, $C < \cmax$ is used to get $D_{\tauhat} \geq \dstar$. 
Such an assumption is required since taking a penalty function too large 
in \eqref[name=Eq.~]{eq:original-problem} would result in selecting 
the segmentation with only one segment, that is, $\dhat=1$.
Third, $ \cmax $ has to be greater than 
$\cmin$ for providing a non-empty interval of possible values for~$C$.
This inequality is also used in the proof of the upper bound on 
$\dinf\bigl(\taustar,\tauhat\bigr) $ when we already know that 
$D_{\tauhat} = \dstar$. 
In Example~\ref{ex.asymptotic-setting}, the $\cmin < \cmax$ hypothesis translates into $\lambdainf_{\taustar} \succ \log(n)/n$.
That is, the size of the smallest segment has to be of order $\log n / n$.
This is known to be a necessary condition to obtain the minimax rate in multiple change-point detection \cite[section~2]{Bru:2014}.

Theorem~\ref{thm.bounded} helps choosing~$C$, 
which is a key parameter of KCP, as in any penalized model selection procedure. 
However, in practice, we do not recommend to directly use 
\eqref{eq:hyp-main-result-synth} for choosing~$C$ for two reasons: 
$\cmin,\cmax$ depend on unknown quantities $\dstar, \lambdainf_{\taustar}, \deltainf$, 
and the exact values of the constants in $\cmin,\cmax$ might be pessimistic 
compared to what we can observe from simulation experiments. 
We rather suggest to use a data-driven method for choosing $C$, 
see Section~\ref{sec:conclu}.

\medbreak

If we know~$\dstar$, we can replace $\tauhat$ by 
\[
\tauhat(\dstar) \in \argmin_{ \tau \in \T_n^{\dstar} } \bigl\{\emprisk_n(\tau) \bigr\}
\, . 
\]
Then, assuming that $\lambdainf_{\taustar} > v_1(y)$ 
--- which is weaker than assuming $\cmin < \cmax$ ---, the proof of 
Theorem~\ref{thm.bounded} shows that, on $\Omega$, we have 
\[
\frac{1}{n}\dinf\bigl(\taustar,\tauhat (\dstar) \bigr)
\leq  \vitThmbounded(y) 
\, . 
\]

\subsection{Loss functions between segmentations}
\label{sec:main:metric} 

Theorem~\ref{thm.bounded} shows that~$\tauhat$ is close to~$\taustar$ 
in terms of $\dinf$. 
Several other loss functions (measures of dissimilarity) 
can be defined between segmentations \citep{Hub_Ara:1985}. 
We here consider a few of them, which are often used or natural 
for the change-point problem. 

\medbreak

Let us first consider losses related to the Hausdorff distance. 
For any~$\tau^1$ and~$\tau^2\in\T_n$, we define
\begin{align*}
\dinf(\tau^1,\tau^2) 
&\defeq \max_{1\leq i\leq D_{\tau^1} - 1}\biggl\{ \min_{1\leq j\leq D_{\tau^2} - 1} 
\abs{\tau_i^1 - \tau_j^2} \biggr\} 
\\
\dinfb(\tau^1,\tau^2) 
&\defeq \max_{1\leq i\leq D_{\tau^1} - 1} \biggl\{ \min_{0\leq j\leq D_{\tau^2}} 
\abs{\tau_i^1 - \tau_j^2} \biggr\} 
\\
\disthaus^{(i)}(\tau^1,\tau^2) 
&\defeq \max\bigl\{\distinf^{(i)}(\tau^1,\tau^2),\distinf^{(i)}(\tau^2,\tau^1)\bigr\} 
\qquad \text{for}\; i\in\bigl\{1,2\bigr\}
\, .
\end{align*}
Whenever $ D_{\tau^1}=D_{\tau^2}$, we define
\[
\dinfD(\tau^1,\tau^2) 
\defeq \max_{1\leq i\leq D_{\tau^1} - 1}\abs{\tau_i^1 - \tau_i^2}
.\]
Note that~$\dinfD$ is symmetric thus there is no need to define~$\disthaus^{(3)}$.
One could also define~$\disthaus^{(1)}$ as the {\em Hausdorff distance\/} between the subsets 
$\{ \tau_1^1,\dots,\tau_{D_{\tau^1}-1}^1 \}$ and 
$\{ \tau_1^2,\dots,\tau_{D_{\tau^2}-1}^2 \}$ 
with respect to the distance $\delta(x,y)=\abs{x-y}$ on $\R$.
These definitions are illustrated by Figure~\ref{fig:metrics-example}.

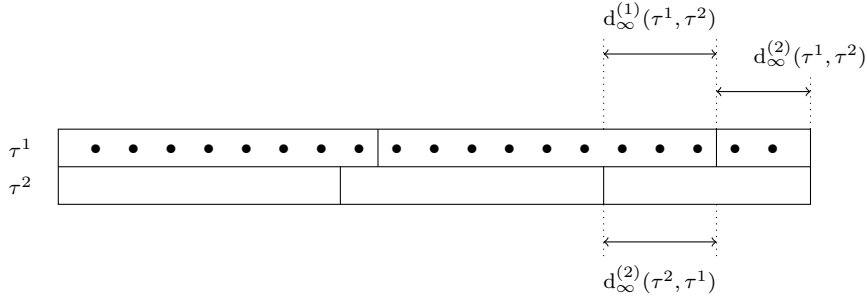
\begin{figure}[ht]
\centering
\begin{tikzpicture}
\def\offset{0}
\draw (0,0+\offset) rectangle (10,1+\offset) ;
\draw (0,0.5+\offset) -- (10,0.5+\offset) ;
\draw (4.25,0.5+\offset) -- (4.25,1+\offset) ;
\draw (8.75,0.5+\offset) -- (8.75,1+\offset) ;
\draw (0.5,0.73+\offset) node {$\bullet$} ;
\draw (1,0.73+\offset) node {$\bullet$} ;
\draw (1.5,0.73+\offset) node {$\bullet$} ;
\draw (2,0.73+\offset) node {$\bullet$} ;
\draw (2.5,0.73+\offset) node {$\bullet$} ;
\draw (3,0.73+\offset) node {$\bullet$} ;
\draw (3.5,0.73+\offset) node {$\bullet$} ;
\draw (4,0.73+\offset) node {$\bullet$} ;
\draw (4.5,0.73+\offset) node {$\bullet$} ;
\draw (5,0.73+\offset) node {$\bullet$} ;
\draw (5.5,0.73+\offset) node {$\bullet$} ;
\draw (6,0.73+\offset) node {$\bullet$} ;
\draw (6.5,0.73+\offset) node {$\bullet$} ;
\draw (7,0.73+\offset) node {$\bullet$} ;
\draw (7.5,0.73+\offset) node {$\bullet$} ;
\draw (8,0.73+\offset) node {$\bullet$} ;
\draw (8.5,0.73+\offset) node {$\bullet$} ;
\draw (9,0.73+\offset) node {$\bullet$} ;
\draw (9.5,0.73+\offset) node {$\bullet$} ;
\draw (3.75,0+\offset) -- (3.75,0.5+\offset) ;
\draw (7.25,0+\offset) -- (7.25,0.5+\offset) ;
\draw (-0.5,0.75+\offset) node {$\tau^1$} ;
\draw (-0.5,0.25+\offset) node {$\tau^2$} ;
\draw[<->] (7.25,-0.5) -- (8.75,-0.5) ;
\draw (8,-1) node {$\dinfb(\tau^2,\tau^1)$} ;
\draw[dotted] (7.25,0) -- (7.25,-0.75) ;
\draw[dotted] (8.75,0) -- (8.75,-0.75) ;
\draw[dotted] (7.25,1) -- (7.25,2.25) ;
\draw[dotted] (8.75,1) -- (8.75,2.25) ;
\draw[<->] (7.25,2) -- (8.75,2) ;
\draw (8,2.5) node {$\dinf(\tau^1,\tau^2)$} ;
\draw[dotted] (10,1) -- (10,1.75) ;
\draw[<->] (8.75,1.5) -- (10,1.5) ;
\draw (10,2) node {$\dinfb(\tau^1,\tau^2)$} ;
\end{tikzpicture}
\caption{Illustration of the definition of 
$\distinf^{(i)}$, with $n=19$, $\tau^1=\bigl[0,8,17,19\bigr]$ 
and $\tau^2=\bigl[0,7,14,19\bigr]$. 
In this example, $D_{\tau^1}=D_{\tau^2}=3$.
We can compute  
$\dinf(\tau^1,\tau^2)= \dinf(\tau^2,\tau^1)
= \dinfb(\tau^2,\tau^1) =  \dinfD(\tau^1,\tau^2)=3$ 
and $\dinfb(\tau^1,\tau^2)=2$.}
\label{fig:metrics-example}
\end{figure}

Interestingly, all these loss functions coincide  whenever $n^{-1} \dinf(\tau^1,\tau^2)$ is small enough.
The following lemma makes this claim rigorous.

\begin{lemma}
\label{lemma:equality-distances}
We have the following two properties. 
\begin{itemize}
\item[(i)]
For any $\tau^1, \tau^2 \in \T_n$ such that 
\[
\frac{1}{n}\dinf(\tau^1,\tau^2) 
< \frac{1}{2} \min\bigl\{\lambdainf_{\tau^1},\lambdainf_{\tau^2}\bigr\} 
\, , \]
we have 
$D_{\tau^1}=D_{\tau^2}$ and
\[
\dinf(\tau^1,\tau^2)
=\dinfb(\tau^1,\tau^2)
=\dinfD(\tau^1,\tau^2)
=\dinfH(\tau^1,\tau^2)
=\dinfbH(\tau^1,\tau^2)
.\]

\item[(ii)]
For any $\tau^1, \tau^2 \in \T_n$ such that 
\[ 
D_{\tau^1}=D_{\tau^2} 
\qquad \text{and} \qquad 
\frac{1}{n}\dinf(\tau^1,\tau^2) < \frac{\lambdainf_{\tau^1}}{2} 
\, , 
\]
we have 
\[
\dinf(\tau^1,\tau^2)
=\dinf(\tau^2,\tau^1)
=\dinfH(\tau^1,\tau^2)
\, .
\]
\end{itemize}
\end{lemma}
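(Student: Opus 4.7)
The plan is to build, for both parts, an order-preserving matching $\sigma$ that sends each interior $\tau^1$-change-point to the nearest interior $\tau^2$-change-point, so that $\abs{\tau_i^1 - \tau_{\sigma(i)}^2} \leq \eta$ where $\eta \defeq \dinf(\tau^1,\tau^2)$, and to leverage the segment-length lower bounds $\lambdainf_{\tau^1}, \lambdainf_{\tau^2}$ to force $\sigma$ to be the identity. Since the hypothesis gives $\eta < \frac{1}{2} n \lambdainf_{\tau^1}$, any two interior $\tau^1$-change-points are separated by strictly more than $2\eta$, so for $i_1 < i_2$ the chain
\[
\tau_{\sigma(i_1)}^2 \leq \tau_{i_1}^1 + \eta < \tau_{i_2}^1 - \eta \leq \tau_{\sigma(i_2)}^2
\]
gives $\sigma(i_1) < \sigma(i_2)$. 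Hence $\sigma$ is strictly increasing, and in particular injective, so $D_{\tau^1} \leq D_{\tau^2}$.

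For part~(ii), the assumption $D_{\tau^1} = D_{\tau^2}$ turns this injection into a bijection of $\{1,\ldots, D_{\tau^1}-1\}$ onto itself; being order-preserving, it must be the identity. This yields $\abs{\tau_i^1 - \tau_i^2} \leq \eta$ for every $i$, so $\dinfD(\tau^1,\tau^2) \leq \eta = \dinf(\tau^1,\tau^2)$, while the reverse $\dinf \leq \dinfD$ is immediate. For $\dinf(\tau^2,\tau^1)$, I reuse the matching in reverse: from the pairwise separation $\abs{\tau_j^1 - \tau_{j'}^1} > 2\eta$ for $j \neq j'$ combined with $\abs{\tau_j^2 - \tau_j^1} \leq \eta$, the closest interior $\tau^1$-point to $\tau_j^2$ is $\tau_j^1$, whence $\dinf(\tau^2,\tau^1) = \dinfD(\tau^1,\tau^2)$, and $\disthaus^{(1)}$ collapses to the same value.

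For part~(i) I additionally need $D_{\tau^2} \leq D_{\tau^1}$. The plan is to exploit the $\lambdainf_{\tau^2}$ half of the hypothesis: an interior $\tau_j^2$ not in the image of $\sigma$ would sit in a gap $(\tau_{\sigma(i)}^2, \tau_{\sigma(i+1)}^2)$ of $\tau^2$ with $\tau_j^2 - \tau_{\sigma(i)}^2 \geq n\lambdainf_{\tau^2}$ and $\tau_{\sigma(i+1)}^2 - \tau_j^2 \geq n\lambdainf_{\tau^2}$, which combined with $\abs{\tau_{\sigma(i)}^2 - \tau_i^1} \leq \eta$ puts $\tau_j^2$ at distance strictly greater than $\eta$ from every interior $\tau^1$-change-point; I would then combine this with the $\lambdainf_{\tau^1}$ bound to reach a contradiction, establishing $D_{\tau^1} = D_{\tau^2}$. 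Once this is in hand, part~(i) reduces to part~(ii), and the claims $\dinfb = \dinf$ and $\dinfbH = \dinfH$ follow because $\abs{\tau_i^1 - 0} \geq n\lambdainf_{\tau^1} > 2\eta > \abs{\tau_i^1 - \tau_i^2}$ (and symmetrically at $n$), so the boundary indices $0$ and $D_{\tau^2}$ never attain the inner minimum.

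The main obstacle is the asymmetry in part~(i): the hypothesis bounds only $\dinf(\tau^1,\tau^2)$ while the conclusion $D_{\tau^1} = D_{\tau^2}$ is symmetric, so most of the care will go into the surjectivity argument for $\sigma$ without an a priori bound on $\dinf(\tau^2,\tau^1)$. Once that symmetric control is set up, the rest of both parts is routine bookkeeping built on the same matching argument.
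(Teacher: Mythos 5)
Your order-preserving nearest-neighbour matching, and everything you build on it --- part~(ii) in full, and the identities $\dinfb=\dinf$, $\dinfD=\dinf$, $\dinf(\tau^2,\tau^1)=\dinf(\tau^1,\tau^2)$ once $D_{\tau^1}=D_{\tau^2}$ is in hand --- is correct and is essentially the paper's own argument (the paper phrases the matching as the map $\phi$ with $\{\phi(i)\}=\argmin_{j}\abs{\tau_i^1-\tau_j^2}$ and additionally checks that the argmin is a singleton, a point your version sidesteps harmlessly by allowing any minimizer).

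The genuine gap is exactly where you locate it: the step $D_{\tau^2}\leq D_{\tau^1}$ in part~(i). Your plan --- show that an interior $\tau_j^2$ outside the image of $\sigma$ lies at distance greater than $\eta$ from every interior change-point of $\tau^1$, then ``combine this with the $\lambdainf_{\tau^1}$ bound to reach a contradiction'' --- cannot be completed, because that configuration is not contradictory: the hypothesis only forces every $\tau^1$-change-point to have a nearby $\tau^2$-change-point and places no constraint on \emph{extra} change-points of $\tau^2$. In fact part~(i) is false as stated. Take $n=12$, $\tau^1=[0,6,12]$ and $\tau^2=[0,4,6,8,12]$: then $\frac{1}{n}\dinf(\tau^1,\tau^2)=0$, $\lambdainf_{\tau^1}=1/2$ and $\lambdainf_{\tau^2}=1/6$, so the hypothesis $0<\frac{1}{2}\min\set{1/2,1/6}=\frac{1}{12}$ holds, yet $D_{\tau^1}=2\neq 4=D_{\tau^2}$. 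The paper's own proof stumbles at the same place: after deducing $D_{\tau^1}\leq D_{\tau^2}$ from the injectivity of $\phi$, it asserts that ``the same argument, exchanging $\tau^1$ and $\tau^2$'' yields the reverse inequality, but the exchanged argument requires a bound on $\dinf(\tau^2,\tau^1)$ that is not part of the hypothesis. Statement and proof are both repaired by replacing $\dinf(\tau^1,\tau^2)$ with $\dinfH(\tau^1,\tau^2)$ in the hypothesis of~(i) (equivalently, assuming the bound for both $\dinf(\tau^1,\tau^2)$ and $\dinf(\tau^2,\tau^1)$); your matching argument, run in both directions, then closes the case. This weakening is harmless for the rest of the paper, since every invocation of the lemma already knows $\dstar=\dhat$ and hence only needs part~(ii) together with the boundary argument you give for $\dinfb$.
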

Lemma~\ref{lemma:equality-distances} is proved 
in Section~\ref{sec:proof-equality-distances}. % of the supplementary material.
As a direct application of Lemma~\ref{lemma:equality-distances} we see that 
the statement of Theorem~\ref{thm.bounded} holds true 
with $\dinf$ replaced by {\em any\/} of 
the loss functions that we defined above, at least for $n$ large enough. 

\medbreak

Another loss between segmentations is the {\em Frobenius\/} loss \citep{Laj_Arl_Bac:2014}, 
which is defined as follows. 
For any $\tau^1, \tau^2 \in \T_n$, 
\[
\distfrob(\tau^1,\tau^2)
\defeq \frobnorm{\Pi_{\tau^1}-\Pi_{\tau^2}}
,\]
where $\Pi_{\tau}$ is the orthogonal projection onto~$F_{\tau}$, 
as defined in Section~\ref{sec:rewriting-emp-risk}, 
and $\frobnorm{\cdot}$ denotes the Frobenius norm of a matrix: 
\[
\forall A\in\R^{N\times M},\quad 
\frobnorm{A}^2
\defeq \sum_{i=1}^N\sum_{j=1}^M A_{ij}^2
\,. 
\]
A closed-form formula for $\distfrob$ can be derived from 
the matrix representation of~$\Pi_{\tau}$ 
that is given by \eqref*{eq:computation-projection}: 
for any $i,j \in \{1 , \ldots, n \}$, 
\[
%\forall 1\leq i,j\leq n, \qquad 
(\Pi_{\tau})_{i,j} 
= 
\begin{cases}
\frac{1}{\abs{\lambda}} \qquad & 
%\parbox{5cm}{if $i$ and $j$ belong to the same segment $\lambda$ of $\tau$}
\text{if $i$ and $j$ belong to the same segment $\lambda$ of $\tau$} 
\\
0 \qquad &\text{otherwise.}
\end{cases}
\]

An interesting feature of the Frobenius loss is that it is smaller than one 
only when $\tau^1$ and $\tau^2$ have the same number of segments, 
whereas Hausdorff distances can be small with very diffferent numbers of segments. 
Indeed, we prove in Section~\ref{sec:proof-prop-equivalence} that % of the supplementary material that  
\begin{equation} 
\label{eq.distfrob.ineq}
\lvert D_{\tau^1} - D_{\tau^2} \rvert 
\leq \distfrob(\tau^1,\tau^2)^2 
\leq D_{\tau^1} + D_{\tau^2} 
\, . 
\end{equation}

The next proposition shows that there is an equivalence 
(up to constants) between 
the Hausdorff and Frobenius losses between segmentations, 
provided that they are close enough.
\begin{proposition}
\label{prop:equivalence}
Suppose that $D_{\tau^1} = D_{\tau^2}$ and 
$\frac{1}{n}\dinf(\tau^1,\tau^2) <\lambdainf_{\tau^1}/2$, then
\[ 
\left(\distfrob(\tau^1,\tau^2)\right)^2 
\leq \frac{12 D_{\tau^1} }{\lambdainf_{\tau^1}} \frac{1}{n}\dinf(\tau^1,\tau^2)
\, . 
\]
If in addition $\frac{1}{n}\dinf(\tau^1,\tau^2) <\lambdainf_{\tau^1}/3$, then 
\[
\frac{2 }{3 \lambdasup_{\tau^1}} \frac{1}{n}\dinf(\tau^1,\tau^2)   
\leq \left(\distfrob(\tau^1,\tau^2)\right)^2
\, . 
\]
\end{proposition}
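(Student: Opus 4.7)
The plan is to reduce $\distfrob(\tau^1,\tau^2)^2 = \frobnorm{\Pi_{\tau^1}-\Pi_{\tau^2}}^2$ segment by segment via an orthonormal basis of $F_{\tau^1}$. Writing $n_a\defeq\lvert\lambda^1_a\rvert$, $m_b\defeq\lvert\lambda^2_b\rvert$, $x_{a,b}\defeq\lvert\lambda^1_a\cap\lambda^2_b\rvert$, and $u_a\defeq\mathbf{1}_{\lambda^1_a}/\sqrt{n_a}$, the projection identity $\Pi_{\tau^i}^2=\Pi_{\tau^i}$ together with $D\defeq D_{\tau^1}=D_{\tau^2}$ yields
\[
\distfrob(\tau^1,\tau^2)^2 = 2D - 2\mathrm{tr}\bigl(\Pi_{\tau^1}\Pi_{\tau^2}\bigr) = 2\sum_{a=1}^D \bigl\lVert(\Id-\Pi_{\tau^2})u_a\bigr\rVert^2, \quad \bigl\lVert(\Id-\Pi_{\tau^2})u_a\bigr\rVert^2 = \sum_b\frac{x_{a,b}(m_b-x_{a,b})}{n_a m_b}.
\]
Let $\Delta\defeq\dinf(\tau^1,\tau^2)$. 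Lemma~\ref{lemma:equality-distances}(ii), which applies under $\Delta/n<\lambdainf_{\tau^1}/2$, provides the natural pairing $\tau^1_a\leftrightarrow\tau^2_a$ and forces $x_{a,b}=0$ for $\lvert a-b\rvert\geq 2$. Setting $d_a\defeq\lvert\tau^1_a-\tau^2_a\rvert\leq\Delta$, $e_a\defeq(\tau^1_a-\tau^2_a)_+$, $f_a\defeq(\tau^2_a-\tau^1_a)_+$ (so $e_a+f_a=d_a$, $e_a f_a=0$), and $c_a\defeq x_{a,a}=n_a-e_a-f_{a-1}=m_a-f_a-e_{a-1}$, the only nonzero entries are $x_{a,a-1}=f_{a-1}$, $x_{a,a}=c_a$, and $x_{a,a+1}=e_a$.

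For the upper bound, I bound each of the three nonzero summands by dropping a ratio at most one. Using $c_a\leq m_a$ gives $\frac{c_a(f_a+e_{a-1})}{n_a m_a}\leq\frac{f_a+e_{a-1}}{n_a}$; using $(m_b-x_{a,b})/m_b\leq 1$ at $b=a\pm 1$ gives $\frac{f_{a-1}(m_{a-1}-f_{a-1})}{n_a m_{a-1}}\leq\frac{f_{a-1}}{n_a}$ and $\frac{e_a(m_{a+1}-e_a)}{n_a m_{a+1}}\leq\frac{e_a}{n_a}$. Summing the three contributions yields $\bigl\lVert(\Id-\Pi_{\tau^2})u_a\bigr\rVert^2\leq(d_{a-1}+d_a)/n_a$. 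Summing over $a$, using $n_a\geq n\lambdainf_{\tau^1}$ together with $\sum_a d_a\leq(D-1)\Delta$, delivers the first inequality of the proposition (in fact with the sharper constant $4$ in place of $12$).

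For the lower bound, pick $a^*$ with $d_{a^*}=\Delta$ and, up to symmetry (the case $f_{a^*}=\Delta$ is analogous, using the $b=a^*$ summand of $\lVert(\Id-\Pi_{\tau^2})u_{a^*}\rVert^2$ instead), assume $e_{a^*}=\Delta$ and $f_{a^*}=0$. Keeping only the summand at $a=a^*+1$, $b=a^*+1$ gives
\[
\frac{1}{2}\distfrob(\tau^1,\tau^2)^2 \geq \frac{c_{a^*+1}(m_{a^*+1}-c_{a^*+1})}{n_{a^*+1}\, m_{a^*+1}}, \quad m_{a^*+1}-c_{a^*+1}=f_{a^*+1}+\Delta\geq\Delta,
\]
with $n_{a^*+1}\leq n\lambdasup_{\tau^1}$. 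The main obstacle is establishing $c_{a^*+1}\geq m_{a^*+1}/2$ under the strengthened hypothesis $\Delta<n\lambdainf_{\tau^1}/3$: otherwise the identity $m_{a^*+1}=c_{a^*+1}+f_{a^*+1}+\Delta$ combined with $c_{a^*+1}<m_{a^*+1}/2$ forces $m_{a^*+1}\leq 2(f_{a^*+1}+\Delta)\leq 4\Delta$, and since $f_{a^*}=0$ implies $n_{a^*+1}=c_{a^*+1}+e_{a^*+1}$, one finds $e_{a^*+1}=n_{a^*+1}-c_{a^*+1}>n\lambdainf_{\tau^1}-m_{a^*+1}/2>n\lambdainf_{\tau^1}-2\Delta>\Delta$, contradicting $e_{a^*+1}\leq d_{a^*+1}\leq\Delta$. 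Once $c_{a^*+1}\geq m_{a^*+1}/2$ is in hand, the display becomes $\frac{1}{2}\distfrob^2\geq\Delta/(2n\lambdasup_{\tau^1})$, stronger than the claimed bound $\Delta/(3n\lambdasup_{\tau^1})$.
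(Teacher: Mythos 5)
Your proof is correct, and it rests on the same foundations as the paper's: the exact trace formula $\distfrob(\tau^1,\tau^2)^2 = 2D - 2\sum_{a,b} \abs{\lambda^1_a\cap\lambda^2_b}^2/(\abs{\lambda^1_a}\,\abs{\lambda^2_b})$ (your identity $2\sum_a\lVert(\Id-\Pi_{\tau^2})u_a\rVert^2$ is a repackaging of it) and the preliminary use of Lemma~\ref{lemma:equality-distances}(ii) to set up the pairing $\tau^1_a\leftrightarrow\tau^2_a$. Where you genuinely diverge is in the execution, and your version is finer. For the upper bound, the paper keeps only the diagonal overlaps and bounds them via $\abs{\lambda^1_k\cap\lambda^2_k}\geq\abs{\lambda^1_k}-2n\epsilon$, $\abs{\lambda^2_k}\leq\abs{\lambda^1_k}+2n\epsilon$ and the elementary inequality $(1-x)^2/(1+x)\geq 1-3x$; this is what produces the constant $12$. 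By instead making the tridiagonal structure of the overlap counts explicit and tracking the signed offsets $e_a,f_a$, you cancel the double-counting and land on the constant $4$. For the lower bound, the paper isolates a rectangular block of entries of $\Pi_{\tau^1}-\Pi_{\tau^2}$ (indices $i\in\lambda^1_{k_0+1}\cap\lambda^2_{k_0}$, $j\in\lambda^1_{k_0+1}\cap\lambda^2_{k_0+1}$, where one projector contributes $1/\abs{\lambda^1_{k_0+1}}$ and the other zero) and gets $2n\epsilon(\abs{\lambda^1_{k_0+1}}-2n\epsilon)/\abs{\lambda^1_{k_0+1}}^2\geq 2\epsilon/(3\lambdasup_{\tau^1})$ directly from $\epsilon<\lambdainf_{\tau^1}/3$ — no case split and no contradiction argument needed. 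Your route through the diagonal term $c_{a^*+1}(m_{a^*+1}-c_{a^*+1})/(n_{a^*+1}m_{a^*+1})$ requires the extra step establishing $c_{a^*+1}\geq m_{a^*+1}/2$, which you handle correctly, and it rewards you with the constant $1$ in place of $2/3$. In short: same skeleton, sharper constants on both sides, at the price of somewhat heavier bookkeeping.
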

Proposition~\ref{prop:equivalence} 
was first stated and proved by \cite[Theorem~B.2]{Laj_Arl_Bac:2014}.
We prove it in Section~\ref{sec:proof-prop-equivalence} for completeness. 

\medbreak

As a corollary of Theorem~\ref{thm.bounded} 
and Proposition~\ref{prop:equivalence}, we get the following 
guarantee on the Frobenius loss between $\taustar$ and 
the segmentation $\tauhat$ estimated by KCP. 

\begin{corollary}
\label{cor:Frobenius.bounded}
Under the assumptions of Theorem~\ref{thm.bounded}, 
on the event $\Omega$ defined by Theorem~\ref{thm.bounded}, 
for any $\tauhat$ satisfying \eqref*{eq:original-problem} 
with~$\pen$ defined by \eqref*{eq:def-penalty-alt}, 
we have: 
\[
\distfrob(\taustar,\tauhat) 
\leq 
\frac{43\dstar}{\sqrt{\lambdainf_{\taustar}}} \cdot \frac{M}{\deltainf} \sqrt{\frac{y+\log n + 1}{n}}
\, . 
\]
\end{corollary}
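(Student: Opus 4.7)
The plan is to chain Theorem~\ref{thm.bounded} with the comparison result of Proposition~\ref{prop:equivalence}, working on the same high-probability event $\Omega$. The main conceptual ingredient is already in place; what remains is essentially an arithmetic check that the quantitative conditions required by Proposition~\ref{prop:equivalence} are implied by the hypothesis $\cmin < C < \cmax$ of Theorem~\ref{thm.bounded}.

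First, I would apply Theorem~\ref{thm.bounded} to get, on $\Omega$, the equality $D_{\tauhat}=\dstar$ together with the bound $\frac{1}{n}\dinf(\taustar,\tauhat)\le \vitThmbounded(y)$. The equality $D_{\tauhat}=\dstar$ covers the first hypothesis of Proposition~\ref{prop:equivalence} (with $\tau^1=\taustar$ and $\tau^2=\tauhat$).

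Next, I need to check that $\frac{1}{n}\dinf(\taustar,\tauhat)<\lambdainf_{\taustar}/2$. The key point is that the strict inequality $\cmin<\cmax$ (implied by $\cmin<C<\cmax$) unfolds to
\[
y+\log n+1 \;<\; \frac{\deltainf^2\,\lambdainf_{\taustar}\, n}{148\,M^2\,\dstar(\dstar+1)} \, .
\]
Multiplying through by $148\dstar M^2/(\deltainf^2 n)$ gives exactly $\vitThmbounded(y)<\lambdainf_{\taustar}/(\dstar+1)\le \lambdainf_{\taustar}/2$, so the second hypothesis of Proposition~\ref{prop:equivalence} is satisfied as well.

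Finally, I apply the first inequality of Proposition~\ref{prop:equivalence} and the bound on $\dinf$ provided by Theorem~\ref{thm.bounded}:
\[
\distfrob(\taustar,\tauhat)^2 \;\le\; \frac{12\,\dstar}{\lambdainf_{\taustar}}\cdot\frac{1}{n}\dinf(\taustar,\tauhat)
\;\le\; \frac{12\,\dstar}{\lambdainf_{\taustar}}\cdot\frac{148\,\dstar M^2}{\deltainf^2}\cdot\frac{y+\log n+1}{n} \, .
\]
Taking square roots and bounding $\sqrt{12\cdot 148}=\sqrt{1776}<43$ yields the stated constant. There is no real obstacle here; the only point that deserves a written line is the derivation of $\vitThmbounded(y)<\lambdainf_{\taustar}/2$ from the assumption $\cmin<C<\cmax$, since it is not stated explicitly in Theorem~\ref{thm.bounded} but is needed to invoke Proposition~\ref{prop:equivalence}.
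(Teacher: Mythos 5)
Your proof is correct and follows essentially the same route as the paper: combine the conclusions of Theorem~\ref{thm.bounded} with the first inequality of Proposition~\ref{prop:equivalence}, after verifying that $\cmin<\cmax$ forces $\vitThmbounded(y)<\lambdainf_{\taustar}/(\dstar+1)\leq\lambdainf_{\taustar}/2$. The arithmetic ($\sqrt{12\cdot 148}=\sqrt{1776}<43$) matches the paper's constant.
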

Note that Corollary~\ref{cor:Frobenius.bounded} gives a better result 
(at least for large~$n$) than the obvious bound
\[
\distfrob(\taustar,\tauhat) \leq \dstar +\dhat - 2
\, .
\]
\begin{proof}
On the event~$\Omega$, we have $\frac{1}{n}\dinf(\taustar,\tauhat) <\lambdainf_{\taustar}/(\dstar + 1)$ and $\dstar=\dhat$. 
Therefore, according to Proposition~\ref{prop:equivalence},
\[
\bigl( \distfrob(\taustar,\tauhat) \bigr)^2 
\leq \frac{12\dstar}{\lambdainf_{\taustar}} \frac{1}{n}\dinf(\taustar,\tauhat) 
\leq \frac{1776\dstar^2 (y+\log n + 1)}{n\lambdainf_{\taustar}}\cdot \frac{M^2}{\deltainf^2}
\, .
\]
\end{proof}

\medbreak

Up to this point, we assessed the quality of the segmentation~$\tau$ by considering the proximity of~$\tau$ with~$\taustar$.
Another natural idea is to measure the distance between~$\mustar$ and~$\mustar_{\tau}$ in~$\hilbert^n$.
It is closely related to the oracle inequality proved by \citet{Arl_Cel_Har:2012}, 
which implies an upper bound on $\norm{\mustar - \muhat_{\tauhat}}^2$. 
We can also observe that there is a simple relationship between $\norm{\mustar-\mustar_{\tau}}^2$ and the Frobenius distance between~$\tau$ and~$\taustar$.
Indeed,
\begin{equation}
\label{eq.ineq.approx-dfrob} 
\norm{\mustar - \mustar_{\tau}}^2 
= \norm{ (\Pi_{\taustar} - \Pi_{\tau} ) \mustar }^2 
\leq \lVert \Pi_{\taustar} - \Pi_{\tau} \rVert_2^2 \norm{ \mustar }^2
\leq 
%%\frobnorm{ \Pi_{\taustar} - \Pi_{\tau} }^2 \norm{ \mustar }^2 
%%= 
\bigl( \distfrob(\taustar,\tauhat) \bigr)^2 \norm{ \mustar }^2 
\, . 
\end{equation}
\Eqref{eq.thm.bounded.maj-approx} in the proof of Theorem~\ref{thm.bounded} 
shows that on $\Omega$, under the assumptions of Theorem~\ref{thm.bounded}, 
\[
\norm{ \mustar - \mustar_{\tauhat} }^2 \leq 74 \bigl( y + \log(n) + 1 \bigr) \dstar M^2 
\]
which is slightly better (but similar) to what Corollary~\ref{cor:Frobenius.bounded}, 
\eqref{eq.ineq.approx-dfrob} 
and the bound $\norm{\mustar}^2 \leq M^2 n$ imply. 

\subsection{Extension to the finite variance case}
\label{sec:main:extension}

Theorem~\ref{thm.bounded} is valid under a boundedness assumption 
(Assumption~\ref{assump:bounded-kernel}). 
What happens under the weaker Assumption~\ref{assump:bounded-variance}? 
As a first step, we provide a result for 
\begin{equation}
\label{eq:alternative-problem}
\tauhat(\dstar, \delta_n)
\in \argmin_{\tau\in\T_n^{\dstar} \,/\, \lambdainf_{\tau} \geq \delta_n } 
\bigl\{\emprisk_n(\tau)\bigr\}
\end{equation}
for some $\delta_n >0$. 
In other words, we restrict our search to segmentations $\tau$ 
of the correct size --- hence $\dstar$ must be known \textit{a priori} --- and 
having no segment with less than $n \delta_n$ observations. 
We discuss how to relax this restriction right after the statement of 
Theorem~\ref{th:localization-moment}. 
Note that the dynamic programming algorithm of \citet{Har_Cap:2007} 
can be used for computing $\tauhat(\dstar, \delta_n)$ efficiently.

Similarly to $\deltainf$, we define 
$\deltasup\defeq \max_i\hilbertnorm{\mustar_i - \mustar_{i+1}}$.

\begin{theorem}
\label{th:localization-moment}
Suppose that Assumption~\ref{assump:bounded-variance} holds true.
For any $\delta_n, y > 0$, define\textup{:}  
\[ 
\vitThmmoment(y,\delta_n) 
:= 
24 (\dstar)^2  \frac{ \deltasup \sqrt{V} }{ \deltainf^2}  \frac{ y }{ \sqrt{n} }
+ 8 \dstar \frac{ V }{ \deltainf^2 } \frac{ y^2 }{ n \delta_n }
\, .  
\]
For any $y>0$, an event $\Omega_2$ exists such that 
\[ 
\proba{ \Omega_2 } \geq 1 - \frac{1}{y^2} 
\]
and, on $\Omega_2$, we have the following\textup{:} 
for any $\delta_n \in (0 , \lambdainf_{\taustar}]$ 
and any $\tauhat(\dstar,\delta_n)$ satisfying \eqref[name=Eq.~]{eq:alternative-problem}, 
if $\vitThmmoment(y,\delta_n)  \leq \lambdainf_{\taustar}$, 
\begin{equation}
\label{eq:localization-moment}
\frac{1}{n}\dinf \bigl( \taustar,\tauhat(\dstar,\delta_n) \bigr) 
\leq \vitThmmoment(y,\delta_n)  
\, .
\end{equation}
\end{theorem}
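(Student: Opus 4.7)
The plan starts from the feasibility of $\taustar$: since $\taustar \in \T_n^{\dstar}$ and $\lambdainf_{\taustar} \geq \delta_n$, the true segmentation belongs to the feasible set of~\eqref{eq:alternative-problem}, hence $\emprisk_n(\tauhat) \leq \emprisk_n(\taustar)$ for any $\tauhat := \tauhat(\dstar,\delta_n)$. Expanding this inequality with $Y = \mustar + \varepsilon$, $\Pi_{\taustar}\mustar = \mustar$, and orthogonality of $\mustar - \Pi_{\tauhat}\mustar$ with respect to $F_{\tauhat}$ yields the basic inequality
\[
\norm{\mustar - \Pi_{\tauhat}\mustar}^2
\leq 2\inner{\Pi_{\tauhat}\mustar - \mustar}{\varepsilon} + \norm{\Pi_{\tauhat}\varepsilon}^2 - \norm{\Pi_{\taustar}\varepsilon}^2
\leq 2\inner{\Pi_{\tauhat}\mustar - \mustar}{\varepsilon} + \norm{\Pi_{\tauhat}\varepsilon}^2\, .
\]
The remainder of the argument then consists of (i) a deterministic lower bound on the left-hand side in terms of $\dinf(\taustar,\tauhat)$ and (ii) uniform upper bounds on the two stochastic terms on the right, both valid on a single event of probability at least $1 - y^{-2}$.

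For step (ii), my key tool is Doob's $L^2$ maximal inequality applied once to the $\hilbert$-valued martingale $S_k := \sum_{i=1}^k \varepsilon_i$. Under Assumption~\ref{assump:bounded-variance}, $\expec{\hilbertnorm{S_n}^2} \leq nV$, so
$\Omega_2 := \set{\max_{0 \leq k \leq n}\hilbertnorm{S_k} \leq y\sqrt{nV}}$
has probability at least $1 - y^{-2}$. On $\Omega_2$, each increment satisfies $\hilbertnorm{S_{\tauhat_{\ell}} - S_{\tauhat_{\ell-1}}}^2 \leq 4 y^2 nV$; combining with the feasibility constraint $\tauhat_{\ell} - \tauhat_{\ell-1} \geq n\delta_n$ and summing over the $\dstar$ segments of $\tauhat$ gives $\norm{\Pi_{\tauhat}\varepsilon}^2 \leq 4 \dstar V y^2/\delta_n$. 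For the cross term, I refine $\tauhat$ with the true change-points of $\taustar$ and write
\[
\inner{\Pi_{\tauhat}\mustar - \mustar}{\varepsilon}
= \sum_{(\ell,k)} \hilbertinner{\overline{\mustar}_{\ell} - a_{\ell,k}}{\Delta_{\ell,k}}\, ,
\]
where the sum runs over the at most $2\dstar - 1$ sub-intervals of this refinement, $\overline{\mustar}_{\ell}$ is the mean of $\mustar$ over the $\ell$-th $\tauhat$-segment, $a_{\ell,k}$ is the constant value of $\mustar$ on the $k$-th sub-interval of that segment, and $\Delta_{\ell,k}$ is the corresponding partial sum of the $\varepsilon_i$'s. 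Each $\hilbertnorm{\overline{\mustar}_{\ell} - a_{\ell,k}}$ is bounded by the diameter of the $\mustar_i$'s, itself at most $\dstar\deltasup$, and each $\hilbertnorm{\Delta_{\ell,k}} \leq 2 y \sqrt{nV}$ on $\Omega_2$, which yields $\abs{\inner{\Pi_{\tauhat}\mustar - \mustar}{\varepsilon}} \leq 4(\dstar)^2 \deltasup \sqrt{V}\, y\sqrt{n}$.

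For step (i), I show that if $d := \dinf(\taustar,\tauhat) \leq n\lambdainf_{\taustar}$, then some true change-point $\taustar_{i_0}$ lies inside a $\tauhat$-segment $\lambda$ such that the sub-intervals on either side of the jump at $\taustar_{i_0}$ within $\lambda$ have lengths $L_{j^\star}$ and $L_{j^\star + 1}$ both at least $d$ (boundary edge cases, when $\lambda$ is the first or last segment of $\tauhat$, are handled using $\taustar_1, n - \taustar_{\dstar - 1} \geq n\lambdainf_{\taustar} \geq d$). The jump of $\mustar$ at $\taustar_{i_0}$ has norm $\geq \deltainf$, so by the weighted-variance identity $\sum_k L_k \hilbertnorm{a_k - \overline{\mustar}_{\ell}}^2 = L^{-1} \sum_{k<k'} L_k L_{k'} \hilbertnorm{a_k - a_{k'}}^2$, the local projection error on $\lambda$ is at least $c\, d\, \deltainf^2$ for an absolute constant $c$. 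Plugging both stochastic bounds from step (ii) into the basic inequality and dividing by $c\, n\, \deltainf^2$ yields the announced bound on $n^{-1}\dinf(\taustar,\tauhat)$. The assumption $\vitThmmoment(y,\delta_n) \leq \lambdainf_{\taustar}$ is what makes this bootstrap consistent: it ensures that the derived bound $\dinf/n \leq \vitThmmoment$ is compatible with the hypothesis $d \leq n\lambdainf_{\taustar}$ used in the lower bound.

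The main obstacle is the uniform control, on a single event of probability $\geq 1 - y^{-2}$, of both stochastic terms despite their dependence on the random segmentation $\tauhat$: under only a second-moment assumption, a union bound over all feasible segmentations --- as available in the bounded case via exponential concentration --- is not affordable. The key observation is that both stochastic terms ultimately reduce to finite ($O(\dstar)$) combinations of squared partial sums $\hilbertnorm{S_j - S_i}^2$, all controlled simultaneously on $\Omega_2$ by a single application of Doob's inequality to the martingale $(S_k)_k$.
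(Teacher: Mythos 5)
Your overall strategy is the paper's: the basic inequality from $\emprisk_n(\tauhat)\leq\emprisk_n(\taustar)$, deterministic bounds on the quadratic term and the cross term in terms of $\maxeps=\max_k\hilbertnorm{\sum_{j\leq k}\varepsilon_j}$, a single event $\Omega_2=\{\maxeps\leq y\sqrt{nV}\}$ controlled by a maximal inequality (your Doob bound for the nonnegative submartingale $\hilbertnorm{S_k}^2$ is exactly the paper's Kolmogorov-type Lemma~\ref{lemma:kolmogorov-moment}), and a deterministic lower bound on $\norm{\mustar-\Pi_{\tauhat}\mustar}^2$ in terms of $\dinf(\taustar,\tauhat)$. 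Your step (ii) is correct as stated; the refinement-based bound on the cross term even improves the paper's constant ($4(\dstar)^2-2\dstar$ instead of $6(\dstar)^2$ from Lemma~\ref{lemma:majo-linear-part}).

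The gap is in step (i). The weighted-variance identity over the \emph{whole} segment $\lambda$, keeping only the pair $(j^{\star},j^{\star}+1)$, gives the lower bound $\frac{L_{j^{\star}}L_{j^{\star}+1}}{\abs{\lambda}}\deltainf^2$; from $L_{j^{\star}},L_{j^{\star}+1}\geq d$ alone you only get $\frac{d^2}{\abs{\lambda}}\deltainf^2$, which can be of order $d^2/n\ll d$ when $\abs{\lambda}$ is large, not $c\,d\,\deltainf^2$. To close this you need an additional structural fact that you never invoke: since $d=\max_i\min_j\abs{\taustar_i-\tauhat_j}$, \emph{every} true change-point lying in $\lambda$ is within distance $d$ of an endpoint of $\lambda$, and since consecutive true change-points are at least $n\lambdainf_{\taustar}\geq d$ apart, $\lambda$ contains at most two of them; one then checks $\abs{\lambda}\leq 2d+(\taustar_{i_0+1}-\taustar_{i_0})\leq 3L_{j^{\star}+1}$, which salvages the claim with $c=1/3$. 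The paper avoids this case analysis entirely (Lemma~\ref{lemma:approx-error-minoration-2}): it restricts to the window $\lzero$ of length $2d$ centred at $\taustar_{i_0}$, which is contained in a single segment of $\tauhat$ and meets only two segments of $\taustar$, and uses that the mean over $\lzero$ minimizes the sum of squared deviations over $\lzero$; this gives $c=1/2$ directly. The distinction matters for the constants: with $c=1/3$ your second term becomes $12\dstar V y^2/(\deltainf^2 n\delta_n)$ rather than the stated $8\dstar V y^2/(\deltainf^2 n\delta_n)$, so you need the window argument (or $c=1/2$) to recover $\vitThmmoment$ exactly.
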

We postpone the proof of Theorem~\ref{th:localization-moment} to Section~\ref{sec:proof-th-localization-moment}. 
Let us make a few remarks.

As for Theorem~\ref{thm.bounded}, our result is non-asymptotic. 
However, it is interesting to write it down in 
the setting of Example~\ref{ex.asymptotic-setting}. 
If~$n$ goes to infinity, then the assumption $\lambdainf_{\taustar}\geq\delta_n$ 
is satisfied whenever $\delta_n\to 0$. 
If we furthermore require that $n\delta_n\to\infty$, 
then \eqref[name=Eq.~]{eq:localization-moment} implies that 
\[
\frac{1}{n}\dinf \bigl( \taustar,\tauhat(\dstar,\delta_n) \bigr)
\xrightarrow[n \rightarrow +\infty]{\mathbb{P}} 0
\, ,
\]
by taking a well-chosen $y$ of order $\sqrt{n}+\sqrt{n\delta_n}$. 
In the particular case of the linear kernel, this result is known under various hypothesis 
\cite[for instance]{Lav_Mou:2000}; it is new for a general kernel.

More precisely, if we take $\delta_n = n^{-1/2}$, 
Theorem~\ref{th:localization-moment} implies that 
\[
\frac{1}{n}\dinf\left (\taustar,\tauhat(\dstar,n^{-1/2})\right )
\]
goes to zero at least as fast as $\ell_n / \sqrt{n}$, 
where $(\ell_n)_{n \geq 1}$ is any sequence tending to infinity, 
for instance $\ell_n = \log(n)$. 
This speed seems suboptimal compared to previous results 
\cite[for instance]{Lav_Mou:2000} 
--- which do not consider the case of a general kernel ---, 
but we have not been able to prove tight enough deviation bounds 
for getting the localization rate $\log(n)/n$ under Assumption~\ref{assump:bounded-variance}. 

How does Theorem~\ref{th:localization-moment} compares to 
Theorem~\ref{thm.bounded}? 
First, as noticed by Remark~\ref{rk.thm.bounded.deltan} 
in Section~\ref{sec:proof-main-result}, the result of 
Theorem~\ref{thm.bounded} also holds true for 
$\tauhat(\dstar,\delta_n)$ as long as $\lambdainf_{\taustar} \geq \delta_n$. 
Second, $\vitThmbounded(y) $ is usually smaller than $\vitThmmoment(y,\delta_n)$ 
--- its order of magnitude is smaller when $n \rightarrow +\infty$ ---, 
and the lower bound on the probability of $\Omega$ is better than the one for $\Omega_2$. 
There is no surprise here: the stronger Assumption~\ref{assump:bounded-kernel} 
helps us proving a stronger result for $\tauhat(\dstar,\delta_n)$. 
Nevertheless, these only are upper bounds, so we do not know whether 
the performance of $\tauhat(\dstar,\delta_n)$ 
actually changes much depending on the noise assumption. 
For instance, as already noticed, we do not believe that the localization speed 
$\log(n)/n$ requires a boundedness assumption; 
in particular cases at least, it has been obtained for unbounded data 
\citep{Lav_Mou:2000,Boy_etal:2006}. 

The dependency in $k$ of the speed of convergence of $\tauhat(\dstar,\delta_n)$ 
is slightly less clear than in Theorem~\ref{thm.bounded}. 
The signal-to-noise ratio appears through $\deltainf^2/V$, 
as expected, but the size $\deltasup$ of the largest true jump also 
appears in $\vitThmmoment$. 
At the very least, it is clear that~$\deltainf^2/V$ should not be too small. 

As noted by \citet{Lav_Mou:2000}, 
it may be possible to get rid of the minimal segment length~$\delta_n$, 
either by imposing stronger conditions on~$\varepsilon$ --- which 
are not met in our setting --- or by constraining the values of~$\muhat$ 
to lie in a compact subset $\Theta\subset\hilbert^{\dstar+1}$.

\section{Numerical simulations}
\label{sec:simulation}

One consequence of our main result, Theorem~\ref{thm.bounded}, is that for a bounded kernel, 
the KCP procedure is consistent in the asymptotic setting presented in Example~\ref{ex.asymptotic-setting}.
We now illustrate this fact by a simulation study.

\begin{figure}[ht!]
%\begin{center}
\makebox[\textwidth][c]{\includegraphics[width=1.2\textwidth]{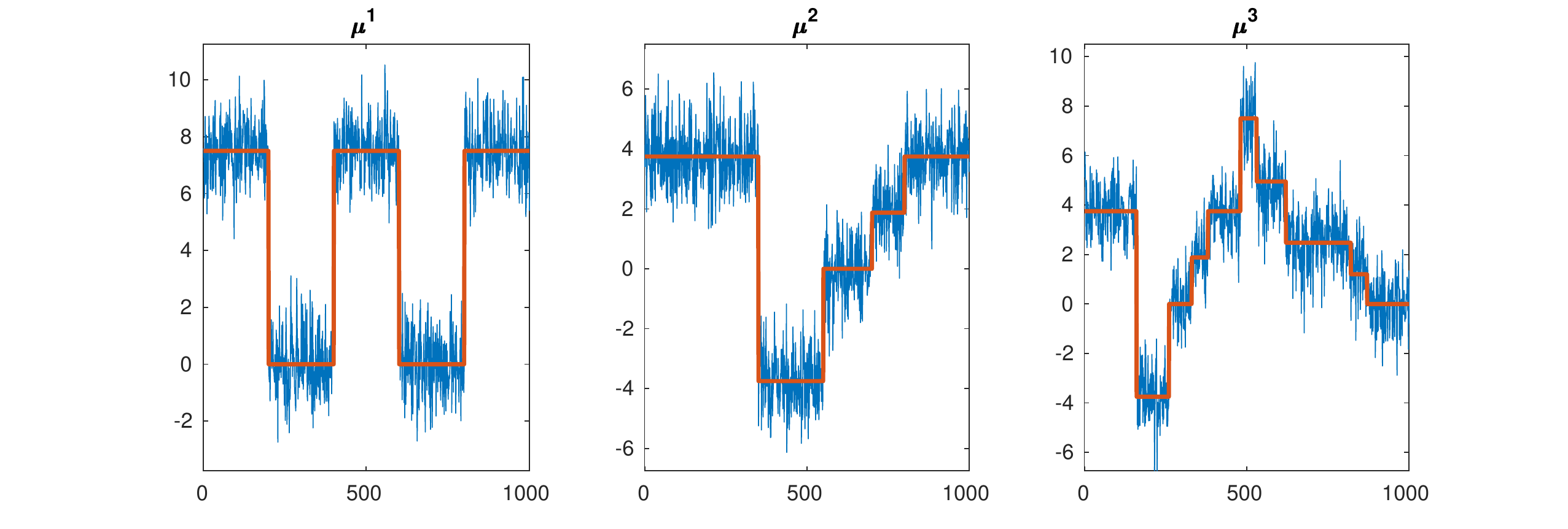}}
%\end{center}
\caption{In red, the three piecewise constant functions used in the simulations of Section~\ref{sec:gaussian-kernel}. 
In blue, a noisy version of these functions. 
Both $\mu^1$ and $\mu^2$ have $4$ jumps; 
$\mu^3$ has $9$ jumps.}
\label{fig:reg-functions}
\end{figure}

\paragraph{Detecting changes in the mean with the Gaussian kernel}
\label{sec:gaussian-kernel}
Let us consider the archetypic change-point detection problem 
---finding changes in the mean of a sequence of independent random variables--- 
and show how these changes are localized more precisely when more data are available. 

We define three functions $\mu^m:[0,1]\to \R$, $1\leq m\leq 3$, previously used by \citet{Arl_Cel:2011}, 
which cover a variety of situations (see Fig.~\ref{fig:reg-functions}). %% that can occur in real data.
For each $m \in \{1,2,3\}$ and several values of $n$ between $10^2$ and $10^3$, 
we repeat $10^3$ times the following:
\begin{itemize}
\item
Sample $n$ independent Gaussian random variables $g_i\sim\gaussian(0,1)$;

\item
Set $X_i = \mu^m(i/n) + g_i$ ---Fig.~\ref{fig:reg-functions} shows one sample for each $m \in \{1,2,3\}$; 

\item
Perform KCP with Gaussian kernel and linear penalty on $X_1, \ldots, X_n$;   
the penalty constant is chosen as indicated in Section~\ref{sec:conclu}, the bandwidth is set to $0.1$, and the maximum number of change-points is set to $30$;

\item
Compute $d_H^{(2)}(\taustar, \tauhat_n)$.
\end{itemize}

The results are collected in Fig.~\ref{fig:convergence-results}, 
where each graph corresponds to a regression function $\mu^m$. 
We represent in logarithmic scale the mean distance between the true segmentation and the estimated segmentation for each value of~$n$. 
The error bars are $\pm \widehat{\sigma} / \sqrt{N}$, where $\widehat{\sigma}$ is the empirical standard deviation over $N=10^3$ repetitions.
We want to emphasize that, though these experiments illustrate our main result Theorem~3.1, 
they are carried out in a slightly different setting 
since the penalty constant~$C$ is not chosen according to \eqref{eq:hyp-main-result-synth}, 
but using the dimension jump heuristic \citep{Bau_Mau_Mic:2012}.

\begin{figure}[ht!]
\makebox[\textwidth][c]{\includegraphics[width=1.15\textwidth]{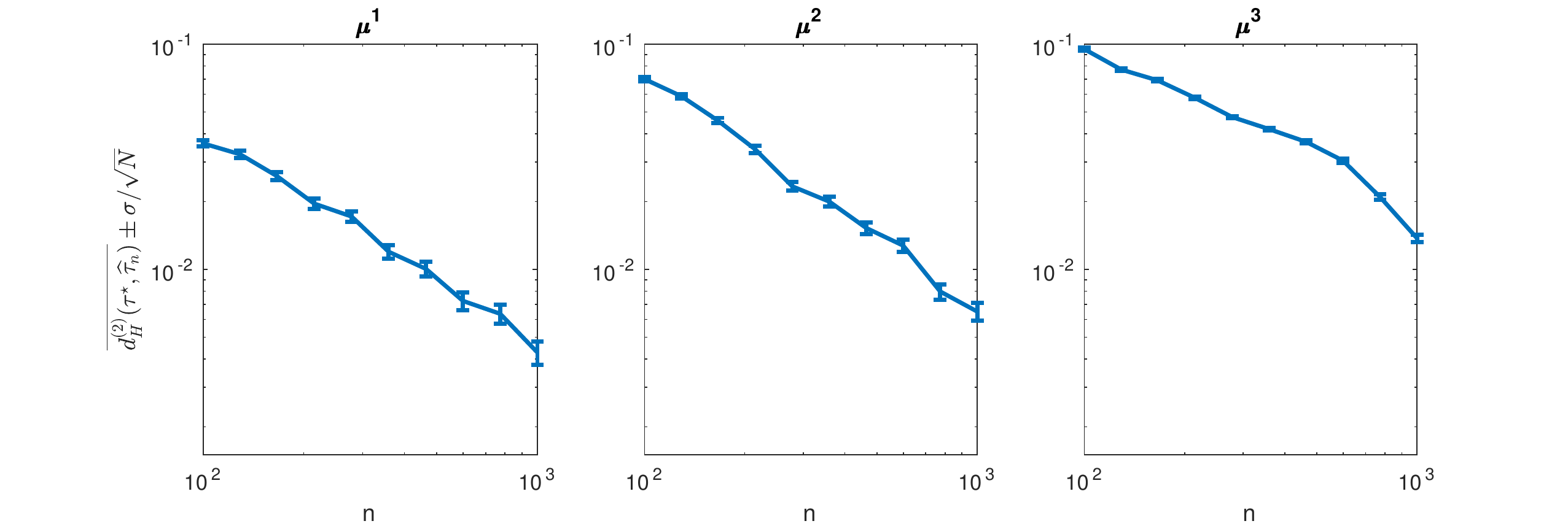}}
\caption{Convergence of $\frac{1}{n}\disthaus^{(2)}(\taustar,\tauhat_n)$ towards $0$ when the number of data points~$n$ is increasing.  
A linear regression between $\log n$ and $\frac{1}{n}\disthaus^{(2)}(\taustar,\tauhat_n)$ 
for $n \geq 300$ 
yields slope estimates $-0.97$, $-1.04$ and $-1.00$, respectively. }
\label{fig:convergence-results}
\end{figure}

The three segmentation problems considered here are quite different in nature, 
but all lead to a linear convergence rate 
(slopes close to~$-1$ on the graphs of Figure~\ref{fig:convergence-results}) 
with different constants 
(different values for the intercept on the graphs of Figure~\ref{fig:convergence-results}). 
Recall that Theorem~\ref{thm.bounded} 
combined with Lemma~\ref{lemma:equality-distances} states that, with high probability, 
\[
\frac{1}{n}\disthaus^{(2)}(\taustar,\tauhat_n) \lesssim \widetilde{v}_1 = \frac{\dstar M^2}{\deltainf^2}\cdot \frac{\log n}{n}
\, .
\]
Hence, whenever $\dstar$, $\deltainf$ and $M$ are fixed, $\frac{1}{n}\disthaus^{(2)}(\taustar,\tauhat_n)$ converges to $0$ at rate at least $\log n / n$ when the number of data points increases.
In our experimental setting, these quantities are fixed, and the observed convergence rate matches our theoretical upper bound. 
The performance of KCP still depends on the regression function $\mu^m$ experimentally, 
by a constant multiplicative factor, 
%which grows with $m$, 
like the theoretical bound $\widetilde{v}_1$. 

\paragraph{Detecting changes in the number of modes}
\label{sec:other-changes}
Let us now consider data $X_1,\ldots,X_n\in\R$ 
whose distribution vary only through the number of modes.
Can we accurately detect such changes with the KCP procedure? 
The data are generated according to the following process for several $n$:
\begin{itemize}
\item
Set $\taustar_1=\floor{n/3}$ and $\taustar_2=\floor{2n/3}$;

\item 
Draw $X_1,\ldots,X_{\taustar_1},X_{\taustar_2+1},\ldots,X_n$ according to a standard Gaussian distribution, and $X_{\taustar_1+1},\ldots,X_{\taustar_2}$ according to a $(1/2,1/2)$-mixture of Gaussian distributions $\gaussian(\delta, 1-\delta^2)$ and $\gaussian(-\delta, 1-\delta^2)$, 
with $\delta = 0.999$; 
the $X_i$ are independent. 
\end{itemize}
%
%%The left panel of Figure~\ref{fig:kernel-choice} depicts a sample of the $X_i$ for $n=10^3$ points.
%
%For a fixed sample size $n=200$, 
We test KCP with various kernels assuming that the number of change-points ($\dstar = 3$) is known; 
this simplification avoids possible artifacts linked to the choice of the penalty constant. 
Results are shown on Figure~\ref{fig:kernel-choice}. 
The $X_i$ all have zero mean and unit variance, 
hence a classical penalized least-squares procedure ---KCP with the linear kernel--- 
is expected to detect poorly the changes in the distribution of the $X_i$, 
as confirmed by Figure~\ref{fig:kernel-choice} 
(for instance, according to the right panel, it is not consistent). 
On the contrary, a Gaussian kernel with well-chosen bandwidth yields much better performance 
according to the middle and right panels of Figure~\ref{fig:kernel-choice} 
(with a rate of order $1/n$). 
\begin{figure}[ht!]
\begin{center}
\hspace*{-0.5cm}
\begin{minipage}[c]{0.35\textwidth}
\includegraphics[width=\linewidth]{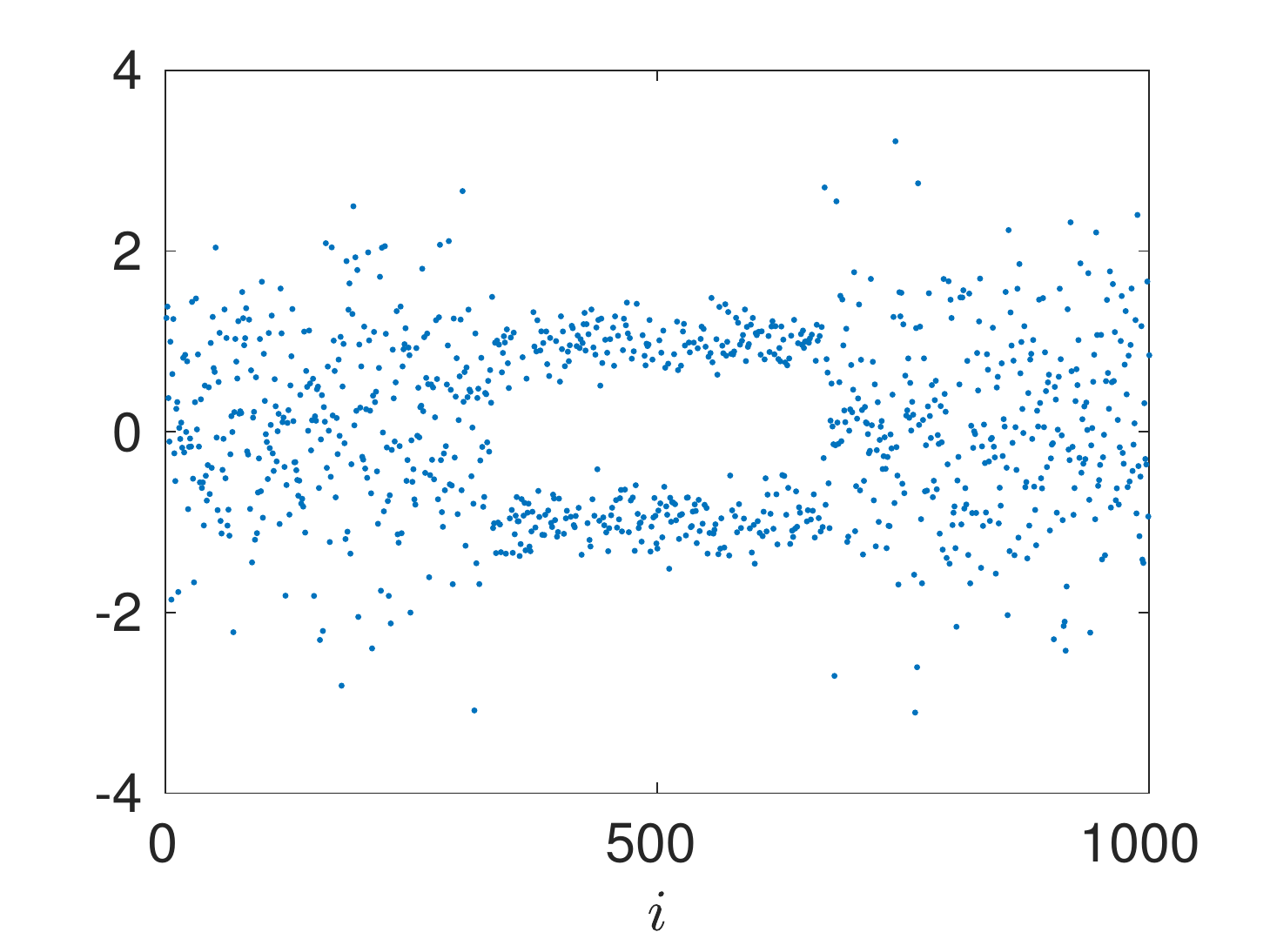}
\end{minipage} \hspace*{-0.25cm}%\hfill
\begin{minipage}[c]{0.35\textwidth}
\includegraphics[width=\linewidth]{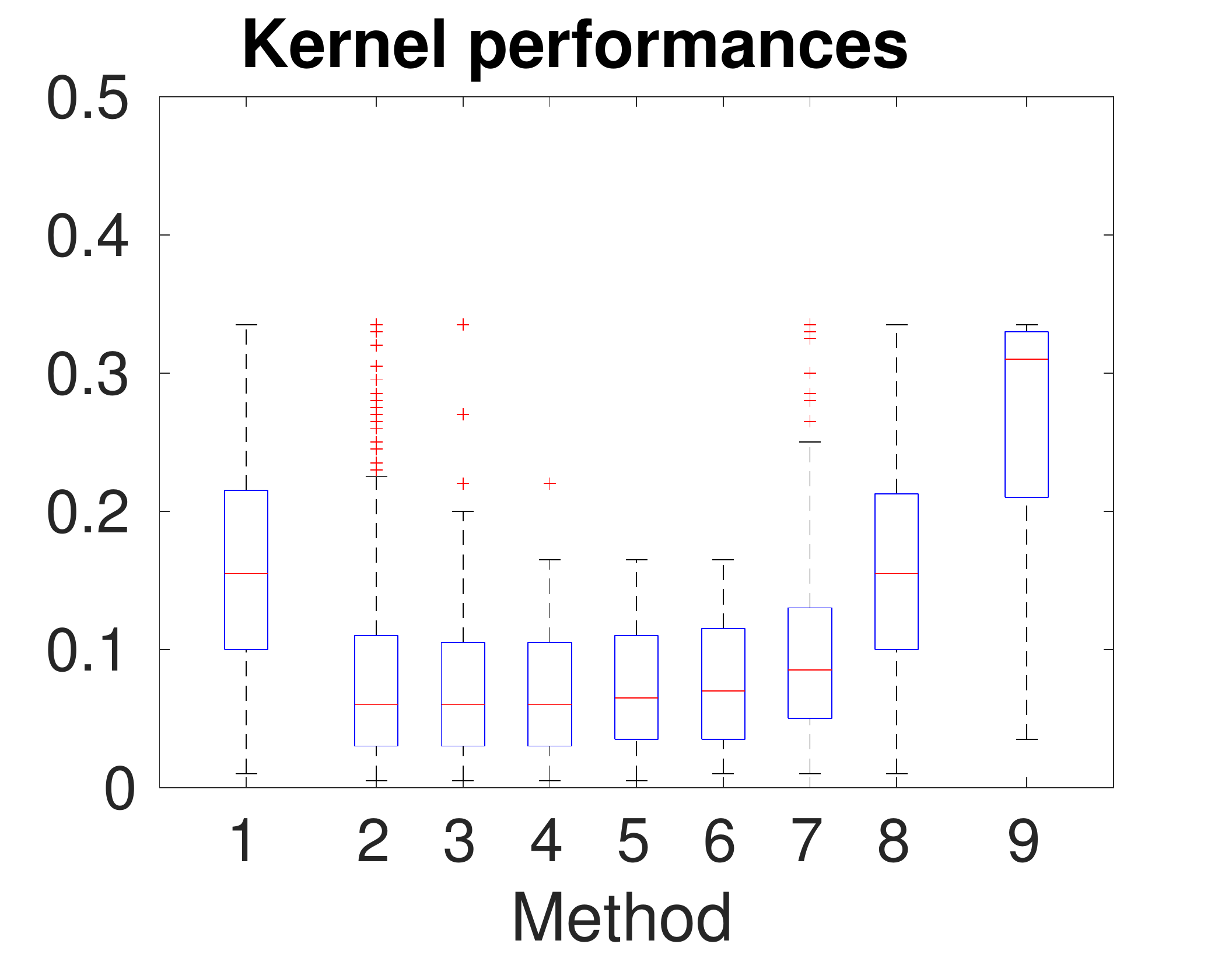}
\end{minipage} \hspace*{-0.4cm}%\hfill
\begin{minipage}[c]{0.35\textwidth}
\includegraphics[width=\linewidth]{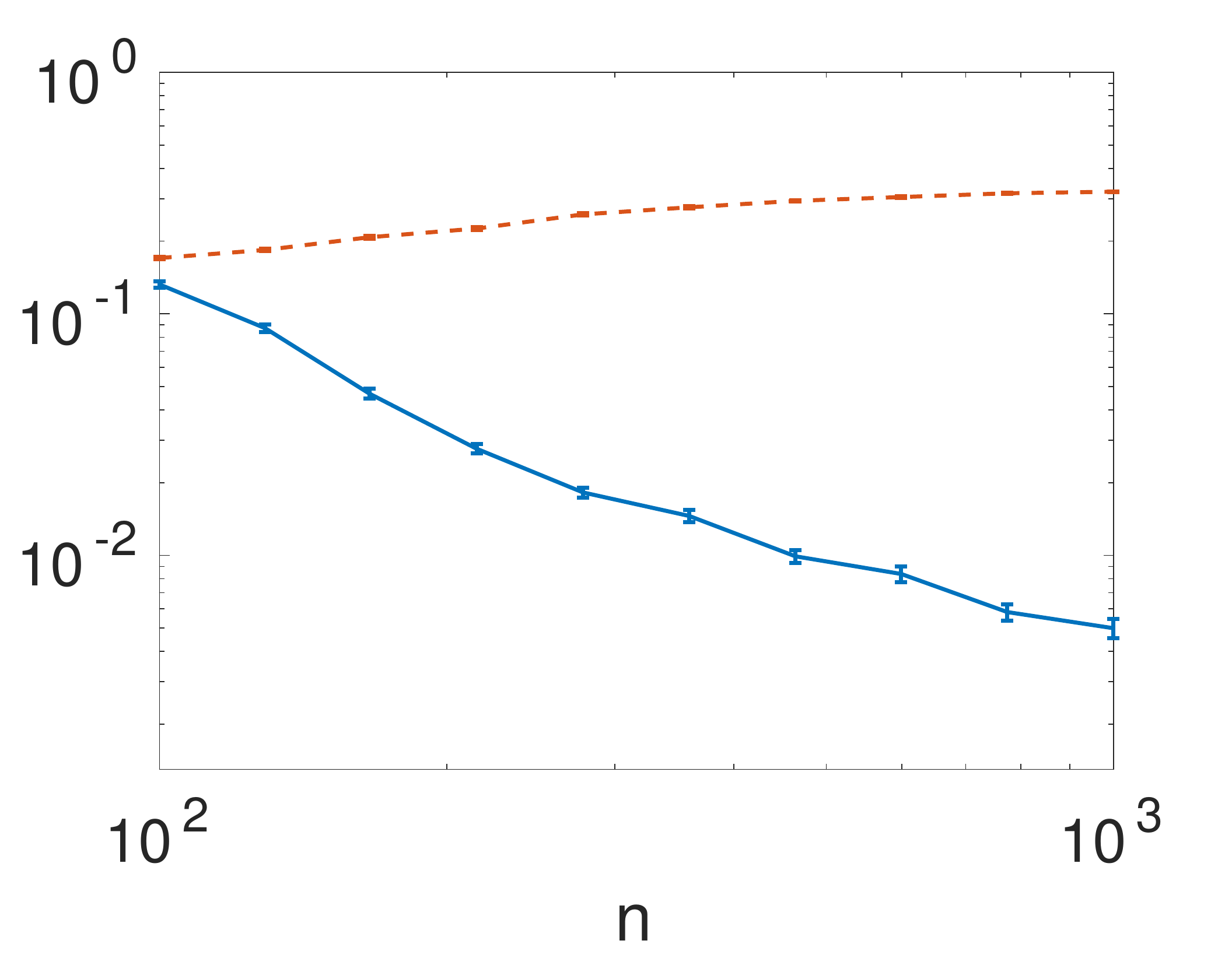}
\end{minipage}
\caption{\textbf{Left\textup{:}} One sample $X_1, \ldots, X_n$ for $n=10^3$. 
\textbf{Middle\textup{:}} Performance of KCP with various kernels \textup{(}$n=200$\textup{)}. %% and $\delta=0.999$. 
Methods $1$ to $8$\textup{:} Gaussian kernel with bandwidth set \emph{via} the median heuristic \textup{(}method 1\textup{)}, 
or fixed equal to $0.001,0.005,0.01,0.05,0.1,0.5,1$ \textup{(}methods $2, \ldots, 9$, respectively\textup{)}. 
Method~9\textup{:} linear kernel. 
\textbf{Right\textup{:}} Estimated values of 
$n^{-1} \disthaus^{(2)}(\taustar, \tauhat_n)$ vs. $n$ in log scale, 
for KCP with a Gaussian kernel with bandwidth $0.01$ \textup{(}blue solid line\textup{;} estimated slope $-1.05$\textup{)} 
and with the linear kernel \textup{(}red dashed line\textup{;} estimated slope $0.16$\textup{)}.}
\label{fig:kernel-choice}
\end{center}
\vskip -0.2in
\end{figure}

\section{Discussion}
\label{sec:conclu}
Before proving our main results, let us discuss some of their consequences 
regarding the KCP procedure. 

\paragraph{Fully non-parametric consistent change-point detection}

We have proved that for any kernel satisfying 
some reasonably mild hypotheses, the KCP procedure outputs a segmentation closeby 
the true segmentation with high probability.

An important particular example is the ``asymptotic setting'' 
of Example~\ref{ex.asymptotic-setting}, where we have a fixed 
true segmentation $\taustar$ and fixed distributions $P_1, \ldots, P_{K+1}$ 
from which more and more points are sampled. 
How fast can KCP recover $\taustar$, without any prior information 
on the number of segments $\dstar$ or on the distributions $P_1, \ldots, P_{K+1}$? 

Let us take a bounded characteristic kernel 
--- for instance the Gaussian or the Laplace kernel 
if $\X = \R^d$ ---, 
so that Assumption~\ref{assump:bounded-kernel} holds true. 
Then, Theorem~\ref{thm.bounded} shows that KCP detects consistently 
all changes in the distribution of the $X_i$, 
and localizes them at speed $\log(n)/n$. 
This speed also depends on the adequation between 
the kernel $k$ and the differences between the $P_j$,  
through the ratio $\deltainf^2/M^2$.  
Obtaining such a fully non-parametric result for multiple change-points 
with a general set $\X$ ---we only need to know 
a bounded characteristic kernel on $\X$--- 
has never been obtained before. 
To the best of our knowledge, 
non-parametric consistency results 
for the detection of arbitrary changes in the distribution of the data 
have only been obtained for real-valued data 
\citep{Zou_Yin_Fen:2014} 
or for the case of a single change-point 
\citep{Car:1988,Bro_Dar:2013}. 

\medbreak

\paragraph{Choice of $k$}

An important question remains: how to choose the kernel $k$? 
In Theorem~\ref{thm.bounded}, $k$ only appears through 
the ``signal-to-noise ratio'' $\deltainf^2/M^2$, 
leading to better theoretical guarantees when this signal-to-noise ratio is larger: 
a larger value for $\cmax$ and a smaller bound $\vitThmbounded$ 
on $\dinf\bigl(\taustar,\tauhat\bigr)$. 
Therefore, a simple strategy for choosing the kernel is 
to pick~$k$ that maximizes $\deltainf^2/M^2$, 
at least among a family of kernels, for instance Gaussian kernels. 
This first idea requires to know the distributions of the $X_i$, 
or at least to have prior information on them. 
Interestingly, when the change-points locations are known, 
$\deltainf^2$ corresponds to the maximum mean discrepancy \citep[MMD,][]{Gre_Bor_Ras:2006} 
between the distributions of the $X_i$ over contiguous segments. 
In this particular setting, 
it is feasible to estimate and to maximize~$\deltainf^2$ with respect to the kernel~$k$, 
as done by \citet{Gre_Sej_Str:2012}.
An interesting future development would be to build an estimator of~$\deltainf^2$ 
without knowing the change-point locations 
and to maximize this estimator with respect to the kernel~$k$. 
We refer to \citet[section~7.2]{Arl_Cel_Har:2012} for a complementary discussion 
about the choice of $k$ for KCP.

\paragraph{Choice of $C$}

Another important parameter of the KCP procedure is the constant $C$ 
that appears in the penalty function. 
As mentioned below Theorem~\ref{thm.bounded}, our theoretical guarantees 
provide some guidelines for choosing $C$, but these are not sufficient 
to choose precisely $C$ in practice. 
We recommend to follow the advice of \cite[section~6.2]{Arl_Cel_Har:2012} on this point, 
which is to choose $C$ from data with the ``slope heuristic'' \citep{Bau_Mau_Mic:2012}.

\medbreak

\paragraph{Modularity of the proofs and possible extensions}
Finally, we would like to emphasize what we believe to be an important contribution 
of this paper. 
The structure of the proofs of Theorems~\ref{thm.bounded} and~\ref{th:localization-moment} 
--- which follow the same strategy --- is modular, 
so that one can easily adapt it to different sets of assumptions. 

Our proof strategy is not fully new, since it is similar to the one of almost all previous 
papers analyzing the consistency of least-squares change-point detection procedures.  
In particular, we adapted some ideas of the proofs of \citet{Lav_Mou:2000} 
to the Hilbert space setting. 
Nevertheless, these papers formulate their main results in asymptotic terms, 
which can be seen as a limitation --- especially when $n$ is small or $\X$ is of 
large dimension. 
Another approach is the one of \citet{Leb:2005,Com_Roz:2004,Arl_Cel_Har:2012} 
where non-asymptotic oracle inequalities 
--- using concentration inequalities and 
following the model selection results of \citet{Bir_Mas:2001} --- 
are provided as theoretical guarantees on some penalized least-squares change-point procedures. 
Up to now, these two approaches seemed difficult to combine. 
The proofs of Theorems~\ref{thm.bounded} and~\ref{th:localization-moment} 
show how they can be reconciled, 
which allows us to mix their strengths. 

Indeed, the assumptions on the distributions of the $X_i$ 
---Assumptions~\ref{assump:bounded-kernel} and~\ref{assump:bounded-variance}--- 
are only used for proving bounds on two quantities 
---a linear term $L_{\tau}$ and a quadratic term $Q_{\tau}$---, uniformly over 
$\tau \in \T_n$. 
Under Assumption~\ref{assump:bounded-kernel}, this is done 
thanks to concentration inequalities 
(Lemmas~\ref{lemma:concentration-linear} 
and~\ref{lemma:concentration-quadratic}) 
which have been proved first by 
\citet{Arl_Cel_Har:2012} in order to get an oracle inequality. 
Under Assumption~\ref{assump:bounded-variance}, this is done 
by generalizing the method of \citet{Lav_Mou:2000} to Hilbert-space valued data, 
through two deterministic bounds 
(Lemmas \ref{lemma:majo-V} and~\ref{lemma:majo-linear-part}) 
and a deviation inequality for 
\[ 
\maxeps\defeq \max_{1\leq k\leq n} \hilbertnorm{\sum_{j=1}^k\varepsilon_j}
\]
(Lemma~\ref{lemma:kolmogorov-moment}). 
The rest of the proofs does not use anything about the distribution 
of $X_1, \ldots, X_n$. 

As a consequence, if one can generalize these bounds 
to another setting, 
a straightforward consequence is that a result similar to 
Theorem~\ref{thm.bounded} or~\ref{th:localization-moment} 
holds true for the KCP procedure in this new setting. 
In particular, this could be used for dealing with 
the case of dependent data $X_1, \ldots, X_n$. 
We could also consider an intermediate assumption between 
Assumption~\ref{assump:bounded-variance} 
and Assumption~\ref{assump:bounded-kernel}, of the form: 
\[
\max_{1 \leq i \leq n} \mathbb{E} \bigl[ k(X_i, X_i)^{\alpha} \bigr] \leq B_{\alpha} < +\infty 
\]
for some $\alpha \in (1,+\infty)$.

\section{Proofs}
\label{sec:proofs}

Let us start by describing our general strategy for proving 
our main results. 
Our goal is to build a large probability event on which 
any 
$\tauhat\in\argmin_{\tau\in\T_n}\crit(\tau)$ 
belongs to some subset~$\E$ of~$\T_n$. 
For proving this, we use the key fact that 
$\crit(\taustar) \geq \crit(\tauhat)$, 
together with a lower bound on $\crit(\tau)$ 
holding simultaneously for all $\tau \in \T_n$---hence for $\tau=\tauhat$. 

In order to get such a lower bound on the empirical penalized criterion, 
we start by decomposing it in Section~\ref{sec:decomposition} 
into terms that are simpler to control individually: 
two random terms --- a linear function of $\varepsilon$ 
and a quadratic function of $\varepsilon$ ---, 
and two deterministic terms --- the approximation error and the penalty.
Then, we control these terms thanks to deterministic bounds 
(Section~\ref{sec:deterministic}) 
and deviation/concentration inequalities (Section~\ref{sec:concentration}). 
Finally, we prove Theorem~\ref{thm.bounded} in Section~\ref{sec:proof-main-result} 
and Theorem~\ref{th:localization-moment} 
in Section~\ref{sec:proof-th-localization-moment}.

\subsection{Decomposition of the empirical risk}\label{sec:decomposition}

The first step in the proofs of Theorems~\ref{thm.bounded} 
and~\ref{th:localization-moment} is 
to decompose the empirical risk~\eqref*{eq:empirical-risk-alt}.

\begin{lemma}
\label{lemma:empirical-decomp}
Let $\tau\in\T_n$ be a segmentation. 
Define $\mustar_{\tau} = \Pi_{\tau} \mustar$. 
Then we can write
\begin{equation}
\label{eq:empirical-decomp}
n\emprisk_n(\tau)=\norm{Y - \muhat_{\tau}}^2 
= \norm{\mustar-\mustar_{\tau}}^2 + 2\inner{\mustar-\mustar_{\tau}}{\varepsilon} 
- \norm{\Pi_{\tau}\varepsilon}^2 + \norm{\varepsilon}^2
\, .
\end{equation}
\end{lemma}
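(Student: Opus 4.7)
The plan is a straightforward computation exploiting the linearity of $\Pi_{\tau}$ and the orthogonality properties of an orthogonal projection in a Hilbert space. Since $Y = \mustar + \varepsilon$ and $\Pi_{\tau}$ is linear, I first write
\[
\muhat_{\tau} = \Pi_{\tau} Y = \Pi_{\tau}\mustar + \Pi_{\tau}\varepsilon = \mustar_{\tau} + \Pi_{\tau}\varepsilon
\, ,
\]
so that the residual $Y - \muhat_{\tau}$ decomposes as $(\mustar - \mustar_{\tau}) + (\varepsilon - \Pi_{\tau}\varepsilon)$.

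Next I expand $\norm{Y - \muhat_{\tau}}^2$ as a sum of three terms:
\[
\norm{Y - \muhat_{\tau}}^2
= \norm{\mustar - \mustar_{\tau}}^2
+ 2 \inner{\mustar - \mustar_{\tau}}{\varepsilon - \Pi_{\tau}\varepsilon}
+ \norm{\varepsilon - \Pi_{\tau}\varepsilon}^2
\, .
\]
Then I simplify the last two terms using the fact that $\Pi_{\tau}$ is the orthogonal projection onto $F_{\tau}$ (w.r.t.\ the inner product $\inner{\cdot}{\cdot}$ on $\hilbert^n$). First, $\mustar - \mustar_{\tau} = (\Id - \Pi_{\tau})\mustar$ lies in $F_{\tau}^{\perp}$ while $\Pi_{\tau}\varepsilon \in F_{\tau}$, so these are orthogonal; hence the cross term reduces to $2 \inner{\mustar - \mustar_{\tau}}{\varepsilon}$. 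Second, by Pythagoras applied to $\varepsilon = \Pi_{\tau}\varepsilon + (\varepsilon - \Pi_{\tau}\varepsilon)$, we get $\norm{\varepsilon - \Pi_{\tau}\varepsilon}^2 = \norm{\varepsilon}^2 - \norm{\Pi_{\tau}\varepsilon}^2$.

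Combining these identities yields exactly \eqref{eq:empirical-decomp}, and the identity $n\emprisk_n(\tau) = \norm{Y - \muhat_{\tau}}^2$ is already recorded in \eqref{eq:empirical-risk-alt}. There is really no obstacle here; the only thing worth checking carefully is that orthogonality arguments valid in $\R^n$ transfer verbatim to $\hilbert^n$, which is immediate since $F_{\tau}$ is a closed subspace and $\Pi_{\tau}$ is defined as the orthogonal projection on it, making both $\Id - \Pi_{\tau}$ and $\Pi_{\tau}$ self-adjoint with orthogonal ranges.
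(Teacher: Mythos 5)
Your proof is correct and follows essentially the same route as the paper: decompose $Y-\muhat_{\tau}=(\mustar-\mustar_{\tau})+(\varepsilon-\Pi_{\tau}\varepsilon)$, expand the squared norm, and use that $\Pi_{\tau}$ is an orthogonal projection to simplify the cross term and to apply Pythagoras to $\norm{\varepsilon-\Pi_{\tau}\varepsilon}^2$. Nothing is missing.
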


\begin{proof}
First, recall that $\muhat_{\tau} = \Pi_{\tau}Y$ and that $Y=\mustar+ \varepsilon$, hence
\begin{align*}
\norm{Y-\muhat_{\tau}}^2 &= \norm{Y-\Pi_{\tau}Y}^2 \\
&= \norm{\mustar+\varepsilon - \Pi_{\tau}(\mustar + \varepsilon)}^2 \\
&= \norm{\mustar - \Pi_{\tau}\mustar}^2 + \norm{\varepsilon-\Pi_{\tau}\varepsilon}^2 
+ 2\inner{\mustar-\Pi_{\tau}\mustar}{\varepsilon - \Pi_{\tau}\varepsilon}.
\end{align*}
Since~$\Pi_{\tau}$ is an orthogonal projection,
\begin{align*}
\norm{Y-\muhat_{\tau}}^2 
&= \norm{\mustar-\mustar_{\tau}}^2 + \norm{\varepsilon}^2 
- 2\inner{\varepsilon}{\Pi_{\tau}\varepsilon} 
+ \norm{\Pi_{\tau}\varepsilon}^2 + 2\inner{(\Id-\Pi_{\tau})\mustar}{\varepsilon} 
\\
&= \norm{\mustar-\mustar_{\tau}}^2 + \norm{\varepsilon}^2 
- \norm{\Pi_{\tau}\varepsilon}^2 
+ 2\inner{(\Id-\Pi_{\tau})\mustar}{\varepsilon}
\, .
\end{align*}
\end{proof}

Since each term of \eqref[name=Eq.~]{eq:empirical-decomp} behaves differently and 
is controlled via different techniques depending on the result to be proven, 
we name each of these terms: 
\begin{equation}
\label{eq:def-L-Q-A}
\L_{\tau} \defeq \inner{\mustar-\mustar_{\tau}}{\varepsilon}
\, , \qquad 
\Q_{\tau}\defeq \norm{\Pi_{\tau}\varepsilon}^2
\qquad\text{and}\qquad 
\A_{\tau}\defeq \norm{\mustar-\mustar_{\tau}}^2 
\, .
\end{equation}
It should be clear that $\L$ stands for ``linear'', $\Q$ stands for ``quadratic'' 
and $\A$ stands for ``approximation error''.
We also define
\begin{equation}
\label{eq:def-psi}
\psi_{\tau}\defeq 2\L_{\tau}-Q_{\tau}+A_{\tau}
\, . 
\end{equation}
Therefore a reformulation of Lemma~\ref{lemma:empirical-decomp} is
\[n\emprisk_n(\tau) = \psi_{\tau} +\norm{\varepsilon}^2 \, .\]
Notice that $\L_{\taustar}=\A_{\taustar}=0$ and $\Q_{\taustar}\geq 0$, hence $\psi_{\taustar}\leq 0$.
Also note that~$\psi$,~$\L$ and~$\Q$ are random quantities depending on~$\varepsilon$.

\subsection{Deterministic bounds}\label{sec:deterministic}

In this section, we provide some deterministic bounds that are used in 
the proofs of Theorems \ref{thm.bounded} and~\ref{th:localization-moment}.

\subsubsection{Approximation error~$\A_{\tau}$} 
We begin by the following result, which is the reason for 
the $\lambdainf_{\taustar} \deltainf^2 $ term 
in Theorem~\ref{thm.bounded}.

\begin{lemma}
\label{lemma:approx-error-minoration:optimal-bound} 
Let $\tau \in \T_n$ be a segmentation such that $D\defeq \dtau <\dstar$. 
Then
\begin{equation}
\label{eq:approx-error-minoration:optimal-bound}
\frac{1}{n}\A_{\tau} 
= \frac{1}{n}\norm{\mustar - \mustar_{\tau}}^2 
\geq \frac{1}{2} \lambdainf_{\taustar}  \deltainf^2 
\, .
\end{equation}
\end{lemma}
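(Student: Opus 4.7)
The approach hinges on the variance identity
\[ \sum_{i \in S} \hilbertnorm{f_i - \bar f}^2 = \frac{1}{|S|} \sum_{i < j, \, i,j \in S} \hilbertnorm{f_i - f_j}^2 \]
valid for any finite $S$ and $f : S \to \hilbert$ with mean $\bar f$. Applied to $\mustar$ on each $\tau$-segment $\lambda_\ell$ and summed over $\ell$, it produces
\[ \A_\tau = \sum_{\ell=1}^{D} \frac{1}{|\lambda_\ell|} \sum_{i<j, \, i,j \in \lambda_\ell} \hilbertnorm{\mustar_i - \mustar_j}^2 . \]
Pairs inside the same true segment contribute zero; pairs in two consecutive true segments contribute at least $\deltainf^2$.

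Next, since $D < \dstar$, the $D-1 < \dstar - 1$ interior change-points of $\tau$ cannot cover all interior change-points of $\taustar$, so pigeonhole forces at least $\dstar - D \geq 1$ true change-points $\taustar_i$ to lie strictly inside some $\tau$-segment $\lambda_{\ell(i)}$ (``missed'' change-points). For each missed $\taustar_i$, I set $L_i := |\lambda_{\ell(i)} \cap S_i|$ and $R_i := |\lambda_{\ell(i)} \cap S_{i+1}|$, writing $S_j := \{\taustar_{j-1}+1, \ldots, \taustar_j\}$ for the true segments. Retaining only pairs with one endpoint in $S_i$ and the other in $S_{i+1}$ in the double sum then gives
\[ \A_\tau \geq \deltainf^2 \sum_{\text{missed } \taustar_i} \frac{L_i \, R_i}{|\lambda_{\ell(i)}|} . \]

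The main obstacle is showing this sum is at least $\frac{n\lambdainf_{\taustar}}{2}$, which I would handle by case analysis on the structure of each bad $\tau$-segment. When $\lambda_{\ell(i)}$ contains exactly the one missed change-point $\taustar_i$ in its interior, we have $L_i + R_i = |\lambda_{\ell(i)}|$ and the elementary bound $\frac{xy}{x+y} \geq \frac{\min(x,y)}{2}$ reduces the contribution to $\frac{\min(L_i, R_i)}{2}$. When $\lambda_{\ell(i)}$ contains two or more interior missed change-points, the ``middle'' pieces of $\lambda_{\ell(i)}$ are necessarily full true segments, so one of $L_i, R_i$ is automatically $\geq n\lambdainf_{\taustar}$. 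The delicate remaining scenario---every bad $\tau$-segment straddles a single missed change-point with both of its $\tau$-boundaries strictly inside adjacent true segments---is addressed via the overhang-sum identity $R_{i_r} + L_{i_{r+1}} = |S_{i_r+1}| \geq n\lambdainf_{\taustar}$ for consecutive missed change-points $\taustar_{i_r} < \taustar_{i_{r+1}}$ whose shared $\tau$-boundary lies inside the true segment $S_{i_r + 1}$; a sub-case analysis on which side attains each $\min$ then combines the two adjacent contributions into a total of at least $\frac{n\lambdainf_{\taustar}}{2} \deltainf^2$.

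The hard part is this combinatorial bookkeeping: verifying that across every configuration of $\tau$ with $D < \dstar$, the individual contributions from missed change-points can be grouped (either singly, or in pairs via the overhang identity) so that the lower bound $\frac{n\lambdainf_{\taustar}}{2}$ is reached---and that no ``orphan'' missed change-point falls through the cracks of the pairing argument when matched change-points are interleaved.
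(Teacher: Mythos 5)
Your reduction is sound up to the combinatorial claim: the variance identity, the vanishing of within-true-segment pairs, and the retention of adjacent cross-pairs correctly give
\[
\A_{\tau} \;\geq\; \deltainf^2 \sum_{\text{missed } \taustar_i} \frac{L_i R_i}{\card{\lambda_{\ell(i)}}}
\, ,
\]
which is in fact an alternative derivation of Eq.~\eqref{eq:lemma:approx-error-minoration:claim-1} in Lemma~\ref{le.approx-lower} (note that when $\lambda_{\ell(i)}$ meets only $\lstar_i$ and $\lstar_{i+1}$, your $L_iR_i/\card{\lambda_{\ell(i)}}$ is exactly $\card{\lambda\cap\lstar_i}\card{\lambda\cap\lstar_{i+1}}/(\card{\lambda\cap\lstar_i}+\card{\lambda\cap\lstar_{i+1}})$). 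This is a genuinely different route from the paper, which instead proves the stronger bound $\A_{\tau}\geq \min_i \card{\lstar_i}\card{\lstar_{i+1}}\hilbertnorm{\mustar_{\lstar_{i+1}}-\mustar_{\lstar_i}}^2/(\card{\lstar_i}+\card{\lstar_{i+1}})$ by strong induction on $\dstar$, reducing case~(i) (a true segment containing no $\tau$-change-point) to a smaller problem and handling case~(ii) ($D=\dstar-1$, one $\tau$-change-point per inner true segment) by the telescoping bound $\alpha_2+(1-\alpha_2)\wedge\alpha_3+\cdots+(1-\alpha_D)\geq 1$.

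However, the combinatorial core of your argument — that the sum above is at least $n\lambdainf_{\taustar}/2$ — is not established as written, and this is where the content of the lemma lives. Two concrete problems. First, in the case where $\lambda_{\ell(i)}$ contains two or more missed change-points, knowing that one of $L_i,R_i$ equals a full true segment does \emph{not} give $L_iR_i/\card{\lambda_{\ell(i)}}\geq n\lambdainf_{\taustar}/2$ for that single term, because $\card{\lambda_{\ell(i)}}$ now strictly exceeds $L_i+R_i$ (it contains whole intermediate true segments): with $L_i=1$, $R_i=n\lambdainf_{\taustar}$ and $\card{\lambda_{\ell(i)}}$ large, the term is $O(1)$. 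You must sum \emph{all} contributions inside that segment; writing the pieces of $\lambda_{\ell(i)}$ as $a_0,\dots,a_m$ with $a_1,\dots,a_{m-1}\geq n\lambdainf_{\taustar}$, the bound $\sum_j a_ja_{j+1}\geq \tfrac{1}{2}n\lambdainf_{\taustar}\sum_j a_j$ does hold, but it needs to be proved. Second, your closing worry about ``orphan'' missed change-points is legitimate and the sketch does not resolve it: the scenario you call the delicate one (every bad segment with \emph{both} boundaries strictly interior to true segments) is actually impossible, and the real issue is a missed $\taustar_i$ whose small-overhang side abuts a \emph{matched} true change-point, which cannot be paired. The missing idea is the counting forced by $D<\dstar$: if $k$ change-points are missed, at most $D-1-(\dstar-1-k)\leq k-1$ of the $\tau$-change-points lie strictly inside true segments; each missed change-point with $\min(L_i,R_i)<n\lambdainf_{\taustar}$ consumes one such interior $\tau$-change-point on its min side, so by pigeonhole either some missed change-point has a full side (and contributes $\geq n\lambdainf_{\taustar}/2$ alone) or two adjacent missed change-points share one interior $\tau$-change-point on their respective min sides, and only then does your overhang identity $R_{i_r}+L_{i_{r+1}}=\card{\lstar_{i_r+1}}$ close the argument. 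With these two repairs the approach goes through, but as submitted the proof of the key inequality is incomplete.
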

The proof of Lemma~\ref{lemma:approx-error-minoration:optimal-bound} 
can be found in Section~\ref{sec:proof-approx-error}. % of the supplementary material.

\begin{remark}
Lemma~\ref{lemma:approx-error-minoration:optimal-bound} is tight. 
Indeed, consider the simple case $\dtau =1$ and $\dstar = 2$. 
Assume that~$n=2m$ is an even number, and let~$\taustar_1 = m$. 
It follows from definitions \eqref*{eq:def-deltainf} 
and~\eqref*{eq:def-lambdainf} that, in this case,
\[
\deltainf = \hilbertnorm{\mustar_{1}-\mustar_{n}}
\qquad\text{and}\qquad 
\lambdainf_{\taustar} = \frac{1}{2} 
\, .
\]
According to \eqref[name=Eq.~]{eq:computation-projection}, 
$\left(\mustar_{\tau}\right)_i 
= \frac{1}{2}\left(\mustar_{1}+\mustar_{n}\right)$, 
which yields 
\[
\frac{1}{n}\A_{\tau} 
=\frac{1}{4}\hilbertnorm{\mustar_{1}-\mustar_{n}}^2
= \frac{1}{2} \lambdainf_{\taustar} \deltainf^2 
\, .
\] 
Thus, in this particular class of examples, equality holds in 
\eqref*{eq:approx-error-minoration:optimal-bound}.
\end{remark}

We next state an analogous result, valid for any $\tau \in \T_n$, 
which plays a key role in the proofs of Theorems \ref{thm.bounded} 
and~\ref{th:localization-moment}.

\begin{lemma}
\label{lemma:approx-error-minoration-2}
For any~$\tau\in\T_n$, 
\begin{equation}
\label{eq:approx-error-minoration-2}
\frac{1}{n}\A_{\tau} 
\geq \dfrac{1}{2} \min\biggl\{\lambdainf_{\taustar},\frac{1}{n}\dinf(\taustar,\tau)\biggr\}\deltainf^2 
\, .
\end{equation}
\end{lemma}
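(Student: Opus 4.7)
My plan is to split into two cases based on $\dtau$. If $\dtau < \dstar$, Lemma~\ref{lemma:approx-error-minoration:optimal-bound} immediately yields $\A_\tau/n \geq \frac{1}{2}\lambdainf_{\taustar}\deltainf^2$, which already dominates the right-hand side of \eqref{eq:approx-error-minoration-2}. So the work lies in the case $\dtau \geq \dstar$ (in which, necessarily, $\dstar \geq 2$); the inequality is also trivial when $\dinf(\taustar,\tau)=0$, so I may assume $D := \dinf(\taustar,\tau) \geq 1$.

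In the main case, I would pick an index $i_0 \in \{1,\dots,\dstar-1\}$ attaining the outer maximum in the definition of $D$, so that $\min_{1 \leq j \leq \dtau - 1} \abs{\taustar_{i_0} - \tau_j} = D$. Let $\ell$ be the unique index with $\tau_{\ell-1} < \taustar_{i_0} \leq \tau_\ell$, and consider the segment $\lambda = \{\tau_{\ell-1}+1,\dots,\tau_\ell\}$ of $\tau$, which straddles $\taustar_{i_0}$. Define $S := \{i \in \lambda : \mustar_i = \mustar_{\taustar_{i_0}}\}$ and $T := \{i \in \lambda : \mustar_i = \mustar_{\taustar_{i_0}+1}\}$, with cardinalities $p$ and $q$; these are disjoint, and a quick check (using $D \geq 1$ when $\ell < \dtau$, and the bound $\taustar_{i_0} \leq n-1$ when $\ell = \dtau$) ensures $\taustar_{i_0} \in S$ and $\taustar_{i_0}+1 \in T$. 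Comparing the restriction of $\Pi_\tau\mustar$ to $\lambda$ (which is constant on $\lambda$) with the richer projection of $\mustar$ onto the subspace of functions constant on $S$ and on $T$ separately, a Pythagorean computation yields
\[
\A_\tau \;\geq\; \frac{pq}{p+q}\,\hilbertnorm{\mustar_{\taustar_{i_0}} - \mustar_{\taustar_{i_0}+1}}^2 \;\geq\; \frac{\min(p,q)}{2}\,\deltainf^2,
\]
where the last step uses $pq/(p+q) \geq \min(p,q)/2$.

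It then remains to show $\min(p,q) \geq \min(D, n\lambdainf_{\taustar})$, after which dividing by $n$ recovers exactly the right-hand side of \eqref{eq:approx-error-minoration-2}. For $p$, the inclusion $S \supseteq \lambda \cap \{\taustar_{i_0-1}+1,\dots,\taustar_{i_0}\}$ gives $p \geq \taustar_{i_0} - \max(\tau_{\ell-1},\taustar_{i_0-1})$, with the convention $\taustar_0 := 0$. When $\ell \geq 2$, $\tau_{\ell-1}$ is an interior change-point of $\tau$ and the definition of $D$ forces $\taustar_{i_0} - \tau_{\ell-1} \geq D$; depending on which of $\tau_{\ell-1},\taustar_{i_0-1}$ is larger, $p$ is at least $D$ or at least the true-segment length $\taustar_{i_0} - \taustar_{i_0-1} \geq n\lambdainf_{\taustar}$. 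When $\ell = 1$, we have $\tau_{\ell-1} = 0 \leq \taustar_{i_0-1}$, so $p \geq \taustar_{i_0} - \taustar_{i_0-1} \geq n\lambdainf_{\taustar}$ directly. A symmetric analysis controls $q$. The main obstacle is precisely this boundary bookkeeping in the $\ell \in \{1,\dtau\}$ cases: since $\dinf$ ignores the trivial endpoints $0$ and $n$, the $D$-bound is not available there, but the minimal-true-segment-length $n\lambdainf_{\taustar}$ automatically takes over and closes the argument.
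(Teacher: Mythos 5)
Your proof is correct and follows essentially the same strategy as the paper's: both isolate a true change-point $\taustar_{i_0}$ that every change-point of $\tau$ misses by at least $\min\bigl\{\dinf(\taustar,\tau),\, n\lambdainf_{\taustar}\bigr\}$, restrict the approximation error to the segment of $\tau$ containing $\taustar_{i_0}$, and conclude with the two-group variance identity $\frac{pq}{p+q}\hilbertnorm{\mustar_{\taustar_{i_0}}-\mustar_{\taustar_{i_0}+1}}^2$ (the paper's Lemma~\ref{le.approx-lower}). The only difference is bookkeeping: the paper takes a symmetric window of half-width $\delta=\min\bigl\{n\lambdainf_{\taustar},\dinf(\taustar,\tau)\bigr\}$ around the chosen true change-point, which by construction lies inside one segment of $\tau$, meets exactly two true segments, and gives $p=q=\delta$ at once, whereas you keep the whole segment of $\tau$, define $S$ and $T$ by the values of $\mustar$, and settle the endpoint cases $\ell\in\{1,\dtau\}$ by hand (also dispatching $\dtau<\dstar$ separately via Lemma~\ref{lemma:approx-error-minoration:optimal-bound}, which the paper's uniform argument does not need).
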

Lemma~\ref{lemma:approx-error-minoration-2} is proved 
in Section~\ref{sec:proof-approx-error-2}. % of the supplementary material.

\subsubsection{Linear term~$\L_{\tau}$ and quadratic term~$\Q_{\tau}$} 
The proof of Theorem~\ref{th:localization-moment} 
relies on some deterministic bounds on $L_{\tau}$ and $Q_{\tau}$. 
We start with a preliminary lemma. 

\begin{lemma}
\label{lemma:partial-sum}
For any $\varepsilon_1, \ldots, \varepsilon_n \in \hilbert$, 
\begin{equation}
\label{eq:partial-sum-1}
\frac{1}{2}\max_{1\leq a < b \leq n} \hilbertnorm{\sum_{j=a}^{b}\varepsilon_j} \leq \max_{1\leq k\leq n} \hilbertnorm{\sum_{j=1}^k\varepsilon_j} \eqdef \maxeps 
\, .
\end{equation}
\end{lemma}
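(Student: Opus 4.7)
The plan is to reduce any partial sum $\sum_{j=a}^{b}\varepsilon_j$ with $1\le a<b\le n$ to a difference of two prefix sums indexed from $1$, and then apply the triangle inequality in $\hilbert$.

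More concretely, I would first observe the telescoping identity
\[
\sum_{j=a}^{b}\varepsilon_j \;=\; \sum_{j=1}^{b}\varepsilon_j \;-\; \sum_{j=1}^{a-1}\varepsilon_j,
\]
with the convention that the second sum is $0$ when $a=1$. The triangle inequality for $\hilbertnorm{\cdot}$ then gives
\[
\hilbertnorm{\sum_{j=a}^{b}\varepsilon_j} \;\leq\; \hilbertnorm{\sum_{j=1}^{b}\varepsilon_j} + \hilbertnorm{\sum_{j=1}^{a-1}\varepsilon_j}.
\]
Since both $b$ and $a-1$ belong to $\{0,1,\ldots,n\}$ (with the empty sum having norm $0$), each term on the right is bounded by $\maxeps$ as defined in the statement. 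Hence the right-hand side is at most $2\maxeps$, and taking the maximum over $1\le a<b\le n$ on the left and dividing by $2$ yields the claimed inequality.

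There is no real obstacle here: the argument is a one-line triangle-inequality trick, the only subtle point being to handle the boundary case $a=1$ by allowing the empty prefix sum (whose Hilbert norm is zero, so it trivially satisfies the bound by $\maxeps$). No properties of the $\varepsilon_j$ beyond living in a normed space are used, which is consistent with the lemma being a purely deterministic statement, later combined with probabilistic control of $\maxeps$ (via Lemma~\ref{lemma:kolmogorov-moment} as announced in the discussion).
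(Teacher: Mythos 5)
Your proof is correct and is essentially identical to the paper's: both write $\sum_{j=a}^{b}\varepsilon_j$ as the difference of the prefix sums up to $b$ and up to $a-1$, apply the triangle inequality, and bound each term by $\maxeps$ (the empty sum for $a=1$ being zero). Your explicit handling of the $a=1$ boundary case is a slightly more careful presentation of the same argument.
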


\begin{proof}
For every $a < b$, we have: 
\begin{align*}
\hilbertnorm{\sum_{j=a}^{b}\varepsilon_j}
= 
\hilbertnorm{\sum_{j=1}^{b}\varepsilon_j - \sum_{j=1}^{a-1}\varepsilon_j}
\leq 
\hilbertnorm{\sum_{j=1}^{b}\varepsilon_j} + \hilbertnorm{\sum_{j=1}^{a-1}\varepsilon_j}
\leq 2 \maxeps \, . 
\end{align*}
\end{proof}

The following result is a deterministic bound on~$\Q_{\tau}$ in terms of~$\maxeps$.

\begin{lemma}
\label{lemma:majo-V}
Let~$\tau\in\T_n$ be a segmentation.
Then
\[\Q_{\tau}\leq \frac{4D_{\tau}\maxeps^2}{n\lambdainf_{\tau}}.\]
\end{lemma}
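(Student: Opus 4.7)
The plan is to compute $Q_{\tau}=\hilbertnorm{\Pi_{\tau}\varepsilon}^2$ explicitly from the projection formula \eqref{eq:computation-projection} and then bound each segment-wise contribution via Lemma~\ref{lemma:partial-sum}.

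First, I would use \eqref{eq:computation-projection} applied to $\varepsilon$: for any $\ell\in\{1,\ldots,D_{\tau}\}$ and any $i\in\{\tau_{\ell-1}+1,\ldots,\tau_{\ell}\}$,
\[
(\Pi_{\tau}\varepsilon)_i
= \frac{1}{\tau_{\ell}-\tau_{\ell-1}}\sum_{j=\tau_{\ell-1}+1}^{\tau_{\ell}}\varepsilon_j
\, .
\]
Since this value is constant over the segment, summing the squared $\hilbert$-norms of the $n$ coordinates of $\Pi_{\tau}\varepsilon\in\hilbert^n$ collapses each segment of length $\tau_{\ell}-\tau_{\ell-1}$ to a single term, giving
\[
Q_{\tau}
= \sum_{\ell=1}^{D_{\tau}} \left( \tau_{\ell}-\tau_{\ell-1}\right) \cdot \frac{1}{(\tau_{\ell}-\tau_{\ell-1})^2}\hilbertnorm{\sum_{j=\tau_{\ell-1}+1}^{\tau_{\ell}}\varepsilon_j}^2
= \sum_{\ell=1}^{D_{\tau}} \frac{1}{\tau_{\ell}-\tau_{\ell-1}}\hilbertnorm{\sum_{j=\tau_{\ell-1}+1}^{\tau_{\ell}}\varepsilon_j}^2
\, .
\]

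Next, I would apply Lemma~\ref{lemma:partial-sum} to each inner Hilbert norm (taking $a=\tau_{\ell-1}+1$ and $b=\tau_{\ell}$): every such partial-sum norm is at most $2\maxeps$, so each squared norm is at most $4\maxeps^2$. Finally, by definition \eqref{eq:def-lambdainf} of $\lambdainf_{\tau}$, we have $\tau_{\ell}-\tau_{\ell-1}\geq n\lambdainf_{\tau}$ for every $\ell$, hence
\[
Q_{\tau}
\leq \sum_{\ell=1}^{D_{\tau}} \frac{4\maxeps^2}{n\lambdainf_{\tau}}
= \frac{4D_{\tau}\maxeps^2}{n\lambdainf_{\tau}}
\, ,
\]
which is the desired conclusion. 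There is no real obstacle here: the statement is an essentially deterministic bookkeeping lemma that combines the closed-form projection and the uniform control on partial sums provided by Lemma~\ref{lemma:partial-sum}; the only mild caveat is remembering that the norm on $\hilbert^n$ aggregates $n$ terms (one per index), so the factor $\tau_{\ell}-\tau_{\ell-1}$ appears in the numerator before canceling against one power of the segment length in the denominator.
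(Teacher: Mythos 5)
Your proof is correct and follows essentially the same route as the paper: both compute $Q_{\tau}$ segment-by-segment from the projection formula \eqref{eq:computation-projection}, bound each partial-sum norm by $2\maxeps$ via Lemma~\ref{lemma:partial-sum}, and use $\tau_{\ell}-\tau_{\ell-1}\geq n\lambdainf_{\tau}$. The only (immaterial) difference is that the paper first bounds the sum by $D_{\tau}$ times the maximum over segments before applying these estimates, whereas you bound each summand directly.
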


\begin{proof}
By \eqref[name=Eq.~]{eq:computation-projection},
\begin{align*}
\Q_{\tau} 
&= \sum_{\ell=1}^{D_{\tau}} \frac{1}{\abs{\tau_{\ell}-\tau_{\ell-1}}} \hilbertnorm{\sum_{j=\tau_{\ell-1}+1}^{\tau_{\ell}}\varepsilon_j}^2 
\\ 
&\leq D_{\tau}\max_{1\leq\ell\leq\dtau}\left\{\frac{1}{\abs{\tau_{\ell}-\tau_{\ell-1}}} \hilbertnorm{\sum_{j=\tau_{\ell-1}+1}^{\tau_{\ell}} \varepsilon_j}^2\right\} 
\\ &
\leq \frac{D_{\tau}}{n\lambdainf_{\tau}}\max_{1\leq\ell\leq\dtau} \hilbertnorm{\sum_{j=\tau_{\ell-1}+1}^{\tau_{\ell}} \varepsilon_j}^2 
%\\ &
\leq \frac{4D_{\tau}}{n\lambdainf_{\tau}}\maxeps^2
\, ,
\end{align*}
where we used Lemma~\ref{lemma:partial-sum} for the last inequality.
\end{proof}

The following result is a deterministic bound on~$\L_{\tau}$.

\begin{lemma}
\label{lemma:majo-linear-part}
For any $\tau\in\T_n$,
\[
\abs{\L_{\tau}} \leq 6 \dstar \max\left\{\dstar,D_{\tau}\right\}\deltasup\maxeps
\, .\]
\end{lemma}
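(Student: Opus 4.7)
\textbf{Proof plan for Lemma~\ref{lemma:majo-linear-part}.}

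The key observation is that $\mustar - \mustar_\tau$ is a piecewise constant element of $\hilbert^n$: indeed, $\mustar$ is constant on the segments of $\taustar$, and $\mustar_\tau = \Pi_\tau \mustar$ is constant on the segments of $\tau$. Hence $\mustar - \mustar_\tau$ is constant on each segment of the common refinement $\tau \vee \taustar$, whose interior break-points form the union of the interior break-points of $\tau$ and $\taustar$; consequently $\tau \vee \taustar$ has at most $\dstar + D_\tau - 1$ segments. Let me denote these refinement segments by $s = \{a_s + 1, \ldots, b_s\}$, and let $v_s \in \hilbert$ be the common value of $\mustar - \mustar_\tau$ on $s$.

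Then the plan is to decompose
\[
\L_\tau = \inner{\mustar - \mustar_\tau}{\varepsilon} = \sum_{s} \hilbertinner{v_s}{\sum_{j = a_s + 1}^{b_s} \varepsilon_j},
\]
apply Cauchy--Schwarz term by term, and invoke Lemma~\ref{lemma:partial-sum} to bound each partial sum by $2 \maxeps$. This yields
\[
\abs{\L_\tau} \leq 2 \maxeps \sum_s \hilbertnorm{v_s}.
\]

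The remaining step is to bound $\sum_s \hilbertnorm{v_s}$ in terms of $\dstar$, $D_\tau$ and $\deltasup$. For a refinement segment $s$ contained in a segment $\lambda_\ell$ of $\tau$, we have $v_s = \mustar_i - |\lambda_\ell|^{-1} \sum_{j \in \lambda_\ell} \mustar_j$ for any $i \in s$, which is a convex combination of differences $\mustar_i - \mustar_j$ with $j \in \lambda_\ell$. If $m_\ell$ denotes the number of distinct values of $\mustar$ on $\lambda_\ell$, then every such difference is a telescoping sum of at most $m_\ell - 1$ jumps of $\mustar$, each of norm at most $\deltasup$; hence $\hilbertnorm{v_s} \leq (m_\ell - 1) \deltasup \leq (\dstar - 1) \deltasup$. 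Since the number of refinement segments is at most $\dstar + D_\tau - 1 \leq 2 \max(\dstar, D_\tau) + (\dstar - 1)$, combining gives
\[
\abs{\L_\tau} \leq 2 \maxeps \cdot (\dstar + D_\tau - 1)(\dstar - 1) \deltasup \leq 6 \dstar \max(\dstar, D_\tau) \deltasup \maxeps,
\]
after using the crude inequality $(\dstar + D_\tau - 1)(\dstar - 1) \leq 2 \dstar \max(\dstar, D_\tau) \cdot \dstar$ with room to spare (a tighter counting would give a smaller constant, but the stated $6$ is sufficient).

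The main obstacle is the counting step: one must correctly identify that $\mustar - \mustar_\tau$ lives in a space of piecewise-constant functions adapted to the common refinement, and carefully bound $\hilbertnorm{v_s}$ via a telescoping argument using the jumps of $\mustar$. Everything else reduces to Cauchy--Schwarz and the elementary Lemma~\ref{lemma:partial-sum}, so the structural observation about the common refinement is really the heart of the proof.
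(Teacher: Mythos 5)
Your proof is correct, but it takes a genuinely different decomposition from the paper's. The paper does not pass to the common refinement: it pairs the segments of $\taustar$ and $\tau$ index by index, padding the shorter list with empty segments up to $D^+ \defeq \max\{\dstar, D_{\tau}\}$, and writes
\[
\L_{\tau} = \sum_{i=1}^{D^+}\hilbertinner{\mustar_{\lstar_i} - \mustar_{\lambda_i}}{\sum_{j\in\lstar_i}\varepsilon_j} + \sum_{i=1}^{D^+}\hilbertinner{\mustar_{\lambda_i} - \mustar_n}{\sum_{j\in\lstar_i}\varepsilon_j - \sum_{j\in\lambda_i}\varepsilon_j} \, ,
\]
exploiting that the differences of partial sums in the second term add up to zero so that the centering by $\mustar_n$ costs nothing; it then bounds every coefficient by $\diam\{\mustar_j \,/\, j \in \{1,\ldots,n\}\} \leq (\dstar-1)\deltasup < \dstar\deltasup$ and every partial sum of $\varepsilon$ by $2\maxeps$ via Lemma~\ref{lemma:partial-sum}, which yields $3D^+ \cdot \dstar\deltasup \cdot 2\maxeps$. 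Your route through $\tau\vee\taustar$ is arguably more transparent: $\mustar-\mustar_{\tau}$ is indeed constant on each of the at most $\dstar+D_{\tau}-1 \leq 2\max\{\dstar,D_{\tau}\}$ refinement segments, each constant value has norm at most $(\dstar-1)\deltasup$ by your telescoping argument (which is exactly the diameter bound in disguise), and Lemma~\ref{lemma:partial-sum} handles the partial sums; this gives $2(\dstar+D_{\tau}-1)(\dstar-1)\deltasup\maxeps \leq 4\dstar\max\{\dstar,D_{\tau}\}\deltasup\maxeps$, slightly better than the stated constant $6$. The only blemish is in your final arithmetic: the ``crude inequality'' you invoke, $(\dstar+D_{\tau}-1)(\dstar-1)\leq 2\dstar\max\{\dstar,D_{\tau}\}\cdot\dstar$, carries a spurious extra factor $\dstar$ and, taken literally, would only produce a bound of order $\dstar^2\max\{\dstar,D_{\tau}\}$; the correct chain is $(\dstar+D_{\tau}-1)(\dstar-1) < 2\max\{\dstar,D_{\tau}\}\cdot\dstar$, which closes the argument as you intended.
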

Lemma~\ref{lemma:majo-linear-part} is proved in 
Section~\ref{sec:proof-lemma-linear}. % of the supplementary material. 

\subsection{Concentration}\label{sec:concentration}

In this subsection, we present concentration results on~$\Q_{\tau}$,~$\L_{\tau}$,  
and deviation bounds for~$\maxeps$ --- which will imply deviation bounds 
on $\Q_{\tau}$ and $L_{\tau}$ by Lemmas~\ref{lemma:majo-V} and~\ref{lemma:majo-linear-part}). 
For any $j \in \{1, \ldots, n\}$, $\tau \in \T_n$ and $\ell \in \{1, \ldots, D_{\tau}\}$, 
we define 
\[ 
v_j \defeq \expec{\hilbertnorm{\varepsilon_j}^2} 
\qquad 
v_{\tau , \ell} \defeq \frac{1}{\tau_{\ell}-\tau_{\ell-1}}\sum_{j=\tau_{\ell-1}+1}^{\tau_{\ell}}v_j
\qquad \text{and} \qquad 
v_{\tau}\defeq \sum_{\ell=1}^D v_{\tau , \ell}
\, . 
\]

\paragraph{Concentration under Assumption~\ref{assump:bounded-kernel}} 
The first result takes care of the linear term~$\L_{\tau}$ 
when Assumption~\ref{assump:bounded-kernel} is satisfied. 
\begin{lemma}[Prop.~3 of~\citet{Arl_Cel_Har:2012}]
\label{lemma:concentration-linear}
Suppose that Assumption~\ref{assump:bounded-kernel} holds true.
Then for any $x>0$, with probability at least $1-2\e^{-x}$, 
for any $\theta>0$, 
\[
\abs{L_{\tau}}
\leq \theta A_{\tau} + \left( \frac{4}{3}+\frac{1}{2\theta}\right) M^2 x 
\, .
\] 
\end{lemma}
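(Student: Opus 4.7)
The plan is to recognize $L_\tau = \inner{\mustar - \mustar_\tau}{\varepsilon}$ as a sum of independent, centered, bounded scalar random variables and apply a scalar Bernstein inequality, then use Young's inequality to split the square-root term into the desired $\theta A_\tau + c M^2 x$ shape.

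First, I would write $u \defeq \mustar - \mustar_\tau \in \hilbert^n$ so that $\norm{u}^2 = A_\tau$, and decompose
\[
L_\tau = \sum_{i=1}^n \hilbertinner{u_i}{\varepsilon_i} \defeq \sum_{i=1}^n Z_i.
\]
Because $\mustar_i = \mathbb{E}[Y_i]$ is the Bochner integral of $Y_i$, the property stated just before the true-segmentation definition gives $\mathbb{E}[\hilbertinner{u_i}{\varepsilon_i}] = \hilbertinner{u_i}{\mathbb{E}[\varepsilon_i]} = 0$, so the $Z_i$ are independent and centered real random variables. Cauchy--Schwarz in $\hilbert$ together with Assumption~\ref{assump:bounded-kernel} (which yields $\hilbertnorm{\varepsilon_i} \leq 2M$ a.s.\ and, averaged, $\hilbertnorm{u_i} \leq 2M$) gives $\abs{Z_i} \leq 4M^2$ a.s. For the variance, I would use $\mathbb{E}[\hilbertnorm{\varepsilon_i}^2] = \mathbb{E}[\hilbertnorm{Y_i}^2] - \hilbertnorm{\mustar_i}^2 \leq M^2$, hence by Cauchy--Schwarz again,
\[
\sum_{i=1}^n \mathbb{E}[Z_i^2] \leq \sum_{i=1}^n \hilbertnorm{u_i}^2 \, \mathbb{E}[\hilbertnorm{\varepsilon_i}^2] \leq M^2 A_\tau.
\]

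Next, I would apply the one-dimensional Bernstein inequality to $\sum_i Z_i$: with probability at least $1 - 2\e^{-x}$,
\[
\abs{L_\tau} \leq \sqrt{2 M^2 A_\tau \, x} + \frac{4 M^2 x}{3}.
\]
Finally, Young's inequality $\sqrt{ab} \leq \theta a + b/(4\theta)$ applied to $a = A_\tau$ and $b = 2 M^2 x$ yields $\sqrt{2 M^2 A_\tau x} \leq \theta A_\tau + \tfrac{M^2 x}{2\theta}$, which combined with the $\tfrac{4}{3} M^2 x$ remainder gives exactly the claimed bound.

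The step I expect to be the only mild obstacle is the second one, ensuring the sub-exponential tail constants come out as $\tfrac{4}{3}$ and $\tfrac{1}{2\theta}$: this requires the sharp variance bound $\mathbb{E}\hilbertnorm{\varepsilon_i}^2 \leq M^2$ (rather than the cruder $4M^2$) and the uniform bound $\hilbertnorm{u_i} \leq 2M$ via the averaging representation \eqref{eq:computation-projection} of $\mustar_\tau$. Once these two elementary facts are in place, the Bernstein + Young combination is routine and independent of $\tau$, so the same bound holds for every fixed $\tau \in \T_n$ on the same high-probability event.
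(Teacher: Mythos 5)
Your proof is correct: the centering of the $Z_i$, the bounds $\hilbertnorm{u_i}\leq 2M$ and $\sum_{i}\mathbb{E}[Z_i^2]\leq M^2 A_{\tau}$, and the Bernstein-plus-Young computation all check out and reproduce the constants $\tfrac{4}{3}$ and $\tfrac{1}{2\theta}$ exactly, with the high-probability event independent of $\theta$ so that the bound holds for all $\theta>0$ simultaneously as the statement requires. Note that the present paper does not prove this lemma itself but imports it as Proposition~3 of \citet{Arl_Cel_Har:2012}, where the argument is essentially the one you give --- Bernstein's inequality applied to the real variables $\hilbertinner{(\mustar-\mustar_{\tau})_i}{\varepsilon_i}$ followed by the splitting $\sqrt{ab}\leq\theta a+b/(4\theta)$ --- so your attempt matches the intended proof.
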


The next result deals with the quadratic term~$\Q_{\tau}$ 
when Assumption~\ref{assump:bounded-kernel} is satisfied. 
\begin{lemma}[Prop.~1 of~\citet{Arl_Cel_Har:2012}]
\label{lemma:concentration-quadratic}
Suppose that Assumption~\ref{assump:bounded-kernel} holds true.
Then for any $x>0$, with probability at least $1-\e^{-x}$,
\[Q_{\tau} - v_{\tau} \leq \left(x+2\sqrt{2xD_{\tau}}\right)\frac{14M^2}{3}.\]
\end{lemma}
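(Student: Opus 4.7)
The plan is to apply a Talagrand-type concentration inequality for the supremum of an empirical process to $\sqrt{Q_\tau}$, then convert the resulting deviation into one for $Q_\tau$. First, since $\Pi_\tau$ is the orthogonal projection onto $F_\tau$, duality gives
\[
\sqrt{Q_\tau} = \norm{\Pi_\tau \varepsilon} = \sup_{h \in F_\tau, \, \norm{h} \leq 1} \inner{\varepsilon}{h} = \sup_{h \in F_\tau, \, \norm{h} \leq 1} \sum_{i=1}^n \hilbertinner{\varepsilon_i}{h_i} \, ,
\]
exhibiting $\sqrt{Q_\tau}$ as the supremum of a centered empirical process indexed by the unit ball of $F_\tau$. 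A direct calculation using the independence of $\varepsilon_1, \ldots, \varepsilon_n$ and $\expec{\varepsilon_i} = 0$ yields $\expec{Q_\tau} = v_\tau$.

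Second, I would check the hypotheses for Bousquet's version of Talagrand's inequality. Under Assumption~\ref{assump:bounded-kernel}, $\hilbertnorm{\varepsilon_i} \leq 2M$ almost surely, so for any $h$ in the index set, each summand satisfies $\abs{\hilbertinner{\varepsilon_i}{h_i}} \leq 2M \hilbertnorm{h_i} \leq 2M$ since $\hilbertnorm{h_i} \leq \norm{h} \leq 1$. The variance proxy $\sup_h \sum_i \text{Var}(\hilbertinner{\varepsilon_i}{h_i})$ is bounded by $\sup_h \sum_i v_i \hilbertnorm{h_i}^2 \leq M^2$, using $v_i \leq \expec{k(X_i,X_i)} \leq M^2$ and $\sum_i \hilbertnorm{h_i}^2 = \norm{h}^2 \leq 1$. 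Setting $Z \defeq \sqrt{Q_\tau}$, Bousquet's inequality then yields, with probability at least $1 - e^{-x}$,
\[
Z \leq \expec{Z} + \sqrt{2x \bigl( M^2 + 4M \expec{Z} \bigr)} + \frac{2 M x}{3} \, .
\]
Jensen's inequality and $v_\tau \leq M^2 D_\tau$ give $\expec{Z} \leq \sqrt{v_\tau} \leq M \sqrt{D_\tau}$.

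Third, I would square the deviation to pass from $Z$ to $Q_\tau = Z^2$. Using the inequality $Z^2 - \expec{Z^2} \leq 2 \expec{Z} (Z - \expec{Z}) + (Z - \expec{Z})^2$, valid because $\expec{Z}^2 \leq \expec{Z^2}$, substituting the Bousquet bound on $Z - \expec{Z}$ together with the estimate $\expec{Z} \leq M \sqrt{D_\tau}$, and reorganizing terms should produce the stated bound $Q_\tau - v_\tau \leq ( x + 2 \sqrt{2 x D_\tau} ) \cdot 14 M^2 / 3$.

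The main obstacle is this final squaring step: naive expansion produces terms of order $M^2 D_\tau^{3/4} \sqrt{x}$, which scales worse than the target $\sqrt{x D_\tau}$. Recovering the correct Bernstein-like form requires carefully pairing the cross term arising from $\expec{Z} \sqrt{M \expec{Z} x}$ using the quadratic control $\expec{Z}^2 \leq v_\tau \leq M^2 D_\tau$ rather than the looser linear bound $\expec{Z} \leq M \sqrt{D_\tau}$. Tight bookkeeping with inequalities such as $\sqrt{a+b} \leq \sqrt{a} + \sqrt{b}$ and $2ab \leq a^2 + b^2$ is needed to land on the exact constant $14/3$.
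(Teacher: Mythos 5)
First, note that the paper does not actually prove this lemma: it is quoted verbatim as Proposition~1 of \citet{Arl_Cel_Har:2012}, so the only ``proof'' in the paper is a citation. Judged on its own terms, your proposal is correct and standard up to the final step (the duality formula for $\sqrt{Q_{\tau}}$, the identity $\expec{Q_{\tau}}=v_{\tau}$, the constants $b=2M$ and $\sigma^2\leq M^2$ in Bousquet's inequality, and $\expec{Z}\leq\sqrt{v_{\tau}}\leq M\sqrt{D_{\tau}}$ are all fine). But the final step is where the lemma lives, and it cannot be completed along the route you describe: the obstruction you flag is not a bookkeeping issue. On the good event you have $Z-\expec{Z}\leq u$ with $u=\sqrt{2x(M^2+4M\expec{Z})}+2Mx/3$, and the best you can extract is $Q_{\tau}-v_{\tau}\leq 2\expec{Z}\,u+u^2$. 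The cross term $2\expec{Z}\sqrt{8Mx\expec{Z}}=\sqrt{32Mx\,\expec{Z}^3}$ is fatal: whether you bound $\expec{Z}^3$ by $(M\sqrt{D_{\tau}})^3$ or by $v_{\tau}\cdot M\sqrt{D_{\tau}}$ (the ``quadratic control'' you invoke), you get a term of order $M^2\sqrt{x}\,D_{\tau}^{3/4}$, and any AM--GM rebalancing only trades it for $M^2x\sqrt{D_{\tau}}$ or $M^2D_{\tau}$; none of these is dominated by $M^2(x+\sqrt{xD_{\tau}})$ uniformly in $x$ and $D_{\tau}$. The $M^2x^2$ term produced by squaring $2Mx/3$ is a second symptom of the same loss. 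To see the gap concretely, take $\tau$ to be the segmentation into $n$ singletons: then $\expec{Z}$ can be of order $M\sqrt{n}$, the Bousquet bound on $Z-\expec{Z}$ contains $\sqrt{8Mx\expec{Z}}\asymp M\sqrt{x}\,n^{1/4}$, and multiplying by $2\expec{Z}$ gives $M^2\sqrt{x}\,n^{3/4}$, whereas the lemma claims a deviation of order $M^2\sqrt{xn}$. So the strategy ``apply Talagrand to the global supremum $\sqrt{Q_{\tau}}$, then square'' falls short by a factor $D_{\tau}^{1/4}$ no matter how tightly you track constants.

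The missing idea is that the rate $\sqrt{2xD_{\tau}}$ records a Bernstein inequality \emph{across segments}, i.e., it relies on the block independence that your single global supremum throws away. Writing $Q_{\tau}=\sum_{\ell=1}^{D_{\tau}}\abs{\lambda_{\ell}}^{-1}\hilbertnorm{\sum_{j\in\lambda_{\ell}}\varepsilon_j}^2$ exhibits $Q_{\tau}$ as a sum of $D_{\tau}$ \emph{independent} nonnegative random variables, each with mean $v_{\tau,\ell}\leq M^2$ and sub-exponential upper tail at scale $M^2$. The sub-exponential control of each block is where a Pinelis- or Bousquet-type inequality enters, but applied \emph{within} a segment, where $\hilbertnorm{\sum_{j\in\lambda_{\ell}}\varepsilon_j}/\sqrt{\abs{\lambda_{\ell}}}$ has a dimension-free deviation of order $M\sqrt{x}+Mx/\sqrt{\abs{\lambda_{\ell}}}$, so that squaring is harmless there. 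A Bernstein inequality for the resulting independent sum, with variance proxy of order $D_{\tau}M^4$ and scale of order $M^2$, then delivers $Q_{\tau}-v_{\tau}\lesssim M^2(\sqrt{xD_{\tau}}+x)$ with the stated constants; this is essentially how Proposition~1 of \citet{Arl_Cel_Har:2012} is established. I would either rebuild your argument along these lines or simply cite the source, as the paper does.
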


We merge Lemmas~\ref{lemma:concentration-linear} 
and~\ref{lemma:concentration-quadratic} for convenience. 
\begin{lemma}
\label{lemma:concentration-synt-alt}
Suppose that Assumption~\ref{assump:bounded-kernel} holds true.
Take any~$\lambda >1$ and~$\tau \in \T_n$ be a segmentation.
Then, there exists an event~$\Omega^{(0)}_{\tau,\lambda}$ 
of probability greater than $1-3\e^{-\lambda\dtau}$ on which\textup{:} 
\[
\psi_{\tau} 
\geq \frac{1}{3}\A_{\tau}-\frac{74}{3}\lambda \dtau M^2
\, .
\]
\end{lemma}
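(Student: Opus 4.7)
The plan is to combine Lemmas~\ref{lemma:concentration-linear} and~\ref{lemma:concentration-quadratic} through a union bound, with the free deviation parameter in both set to $x = \lambda \dtau$, and to control the resulting constant in front of $\lambda \dtau M^2$ by tuning the free parameter $\theta$ of Lemma~\ref{lemma:concentration-linear} and by bounding the deterministic variance $v_\tau$ sharply. The starting point is the decomposition $\psi_\tau = A_\tau + 2L_\tau - Q_\tau$ from \eqref{eq:def-psi}, so one needs a lower bound on $2L_\tau$ and an upper bound on $Q_\tau$.

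First, I would apply Lemma~\ref{lemma:concentration-linear} with $x = \lambda \dtau$ and with the free parameter $\theta = 1/3$. This produces an event of probability at least $1 - 2 e^{-\lambda \dtau}$ on which
\[
2 L_\tau \geq -\tfrac{2}{3} A_\tau - \tfrac{17}{3} \lambda M^2 \dtau ,
\]
so that $A_\tau + 2 L_\tau \geq \tfrac{1}{3} A_\tau - \tfrac{17}{3} \lambda M^2 \dtau$. The choice $\theta = 1/3$ is the one that leaves precisely the coefficient $1/3$ in front of $A_\tau$ that appears in the target inequality.

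Next, I would apply Lemma~\ref{lemma:concentration-quadratic} with the same $x = \lambda \dtau$, yielding an event of probability at least $1 - e^{-\lambda \dtau}$ on which $Q_\tau \leq v_\tau + (x + 2\sqrt{2 x \dtau})(14 M^2 / 3)$. Under Assumption~\ref{assump:bounded-kernel} one has
\[
v_j = \expec{\hilbertnorm{\varepsilon_j}^2} = \expec{k(X_j,X_j)} - \hilbertnorm{\mustar_j}^2 \leq M^2 ,
\]
hence $v_\tau \leq M^2 \dtau$. Combined with the elementary estimate $\lambda + 2\sqrt{2\lambda} \leq (1 + 2\sqrt{2})\lambda$, valid for $\lambda \geq 1$, this gives $Q_\tau \leq \bigl(1 + \tfrac{14}{3}(1 + 2\sqrt{2})\bigr) \lambda M^2 \dtau$ on this event.

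Intersecting the two events (total complement probability at most $3 e^{-\lambda \dtau}$) and substituting into the decomposition, the coefficient in front of $\lambda M^2 \dtau$ becomes
\[
\tfrac{17}{3} + 1 + \tfrac{14}{3}\bigl(1 + 2\sqrt{2}\bigr) = \tfrac{34 + 28 \sqrt{2}}{3} ,
\]
which is bounded by $74/3$ since $28\sqrt{2} < 40$. The only genuinely tight step is to control $v_\tau$ via the variance identity $v_j \leq M^2$ rather than the cruder almost-sure bound $\hilbertnorm{\varepsilon_j} \leq 2M$, which would give $v_\tau \leq 4 M^2 \dtau$ and push the final constant past $74/3$; once this is in place the rest is routine bookkeeping with $\lambda \geq 1$.
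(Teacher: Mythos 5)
Your proof is correct and follows essentially the same route as the paper's: Lemma~\ref{lemma:concentration-linear} with $\theta = 1/3$ and $x = \lambda\dtau$, Lemma~\ref{lemma:concentration-quadratic} with the same $x$, the bound $v_{\tau} \leq \dtau M^2$, a union bound, and the final estimate $\tfrac{34 + 28\sqrt{2}}{3} \leq \tfrac{74}{3}$ using $\lambda \geq 1$. Your observation that the sharper variance bound (rather than the almost-sure bound $\hilbertnorm{\varepsilon_j} \leq 2M$) is needed to stay under $74/3$ is accurate.
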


\begin{proof}
According to Lemma~\ref{lemma:concentration-linear} with $\theta=1/3$ 
and $x = \lambda \dtau$, 
there exists an event~$\Omega_{\tau,\lambda}^{(1)}$ on which 
$\abs{\L_{\tau}}\leq \frac{1}{3}\A_{\tau} + \frac{17}{6}\lambda\dtau M^2$, 
with $\proba{\Omega_{\tau,\lambda}^{(1)}}\geq 1-2\e^{-\lambda\dtau}$.
Lemma~\ref{lemma:concentration-quadratic} with $x = \lambda \dtau$ 
gives~$\Omega_{\tau,\lambda}^{(2)}$ on which 
$\Q_{\tau}-v_{\tau} \leq \frac{14}{3}\left (\lambda+2\sqrt{2\lambda}\right ) \dtau M^2$, 
with $\proba{\Omega_{\tau,\lambda}^{(2)}}\geq 1-\e^{- \lambda \dtau}$.
Then, $\Omega^{(0)}_{\tau,\lambda} \defeq\Omega_{\tau,\lambda}^{(1)}\cap\Omega_{\tau,\lambda}^{(2)}$ has a probability larger than 
$1- 3 \e^{- \lambda \dtau}$ by the union bound. 
Since for any $1\leq \ell\leq \dtau$, $v_{\tau , \ell}\leq M^2$, we have $v_{\tau}=\sum_{\ell=1}^{\dtau}v_{\tau , \ell}\leq D_{\tau} M^2$. 
Hence, by definition \eqref*{eq:def-psi} of~$\psi_{\tau}$ 
and using that $\lambda \geq 1$, 
on the event~$\Omega^{(0)}_{\tau,\lambda}$, we have: 
\begin{align*}
\psi_{\tau} 
&\geq \dfrac{1}{3}\A_{\tau} 
- \left(\frac{31}{3}\lambda + \frac{28}{3}\sqrt{2}\sqrt{\lambda}+1\right ) \dtau M^2 
\\
&\geq \dfrac{1}{3}\A_{\tau} 
- \lambda \left(\frac{31}{3}+ \frac{28}{3}\sqrt{2}+1\right ) \dtau M^2
\, . 
\end{align*}
\end{proof}

\begin{remark}
It is also possible to obtain an upper bound for~$\psi_{\tau}$: 
by Lemma~\ref{lemma:concentration-linear}, 
for every $\lambda \geq 0$, 
on the event $\Omega_{\tau,\lambda}^{(2)} \subset \Omega_{\tau,\lambda}^{(0)}$, 
\[
\psi_{\tau} \leq \frac{5}{3}\A_{\tau} + \frac{17}{3}\lambda\dtau M^2
\, .
\]
However, we do not need this result thereafter.
\end{remark}

\paragraph{Concentration under Assumption~\ref{assump:bounded-variance}} 
Lemma~\ref{lemma:majo-V} and~\ref{lemma:majo-linear-part} directly translate 
upper bounds on~$\maxeps$ into controls of~$\L_{\tau}$ and~$\Q_{\tau}$.
Under Assumption~\ref{assump:bounded-variance}, this is achieved via the following lemma, 
a Kolmogorov-like inequality for the noise in the RKHS.
This result is a straightforward generalization of the inequality obtained 
by \citet{Kol:1928} into the Hilbert setting.
A more precise result (for real random variables only) can be found in~\citep{Haj_Ren:1955}, 
of which we follow the proof. 
The scheme of \citet{Haj_Ren:1955} adapts well in our setting even though we do not need the full result.

\begin{lemma}
\label{lemma:kolmogorov-moment}
If Assumption~\ref{assump:bounded-variance} holds true, 
then, for any~$x >0$,
\begin{equation}
\label{eq:kolmogorov-moment}
\proba{\maxeps \geq x}\leq \dfrac{1}{x^2}\sum_{j=1}^n v_j \, .
\end{equation}
\end{lemma}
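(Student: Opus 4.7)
The plan is to adapt Kolmogorov's classical maximal inequality to the Hilbert-valued setting, following the standard stopping-time argument. The key observation is that the $\varepsilon_j$ are centered (since $\varepsilon_j = Y_j - \mustar_j$ and $\mustar_j = \mathbb{E}[Y_j]$ as a Bochner integral) and independent, so the partial sums $S_k \defeq \sum_{j=1}^k \varepsilon_j$ behave like a martingale in $\hilbert$, and in particular $\mathbb{E}\bigl[\hilbertnorm{S_n}^2\bigr] = \sum_{j=1}^n v_j$ by Pythagoras in $\hilbert$ combined with independence.

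First I would introduce the first-passage time $T \defeq \min\{k \in \{1,\ldots,n\} : \hilbertnorm{S_k} \geq x\}$, with the convention $T = +\infty$ if $\hilbertnorm{S_k} < x$ for every $k$. Then $\{\maxeps \geq x\} = \bigsqcup_{k=1}^n \{T = k\}$ is a disjoint partition, and on $\{T = k\}$ one has $\hilbertnorm{S_k} \geq x$. Expanding
\[
\hilbertnorm{S_n}^2 = \hilbertnorm{S_k}^2 + 2 \hilbertinner{S_k}{S_n - S_k} + \hilbertnorm{S_n - S_k}^2
\]
and multiplying by $\indic{T = k}$, the middle cross term has zero expectation: indeed $S_k \indic{T = k}$ is $\sigma(\varepsilon_1,\ldots,\varepsilon_k)$-measurable while $S_n - S_k$ is independent of this sigma-algebra and centered, so continuity of the inner product yields $\mathbb{E}[\hilbertinner{S_k \indic{T=k}}{S_n - S_k}] = \hilbertinner{\mathbb{E}[S_k \indic{T=k}]}{\mathbb{E}[S_n - S_k]} = 0$.

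Dropping the nonnegative term $\hilbertnorm{S_n - S_k}^2$ and using $\hilbertnorm{S_k} \geq x$ on $\{T = k\}$, I would obtain
\[
\mathbb{E}\bigl[\hilbertnorm{S_n}^2 \indic{T = k}\bigr] \geq \mathbb{E}\bigl[\hilbertnorm{S_k}^2 \indic{T = k}\bigr] \geq x^2 \, \proba{T = k}
\, .
\]
Summing over $k$ from $1$ to $n$ and using $\sum_{k=1}^n \proba{T=k} = \proba{\maxeps \geq x}$ gives
\[
\sum_{j=1}^n v_j = \mathbb{E}\bigl[\hilbertnorm{S_n}^2\bigr] \geq \sum_{k=1}^n \mathbb{E}\bigl[\hilbertnorm{S_n}^2 \indic{T=k}\bigr] \geq x^2 \, \proba{\maxeps \geq x}
\, ,
\]
which rearranges to \eqref{eq:kolmogorov-moment}.

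There is no serious obstacle here; the only point requiring a bit of care is the vanishing of the cross term in the Hilbert setting. This is really the only place where one must invoke Bochner/independence properties rather than repeat the real-valued proof verbatim, but it follows immediately from the continuous bilinearity of $\hilbertinner{\cdot}{\cdot}$ and the fact that Bochner expectation commutes with continuous linear functionals. Everything else is just the standard maximal-inequality bookkeeping.
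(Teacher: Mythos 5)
Your proof is correct and follows essentially the same route as the paper: both use the first-passage decomposition of $\{\maxeps \geq x\}$ into the disjoint events $\{T=k\}$ (the paper's $A_r$), show the cross term between $S_k$ on that event and the independent centered increment $S_n - S_k$ vanishes, and conclude via $\mathbb{E}\bigl[\hilbertnorm{S_n}^2\bigr] = \sum_{j=1}^n v_j$. The only difference is cosmetic — you work with indicators and unconditional expectations where the paper uses conditional expectations given the events $A_r$ — and your handling of the vanishing cross term via measurability of $S_k \indic{T=k}$ is, if anything, slightly cleaner.
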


We prove Lemma~\ref{lemma:kolmogorov-moment} in Section~\ref{sec:proof:lemma:kolmogorov-moment}. % of the supplementary material.

\begin{remark}
We can reformulate Lemma~\ref{lemma:kolmogorov-moment} as follows. 
For any $y >0$, there exists an event of probability at least $1 - y^{-2}$ 
on which $M_n < y \sqrt{\sum_{i=j}^n v_j} \leq y\sqrt{nV}$.
Equivalently, for any $z \geq 0$, there exists an event of probability 
at least $1-\e^{-z}$ such that 
$M_n < \e^{z/2}\sqrt{\sum_{i=j}^n v_j} \leq \e^{z/2} \sqrt{nV}$.
\end{remark}

\subsection{Proof of Theorem~\ref{thm.bounded}}
\label{sec:proof-main-result}

We follow the strategy described at the beginning of Section~\ref{sec:proofs}.

\paragraph{Definition of $\Omega$} 
Let us define 
$\Omega\defeq \bigcap_{\tau\in\T_n}\Omega^{(0)}_{\tau,\lambda}$ 
with $\lambda = y+\log n + 1 > 1$, 
where we recall that $\Omega^{(0)}_{\tau,\lambda}$ 
is defined in Lemma~\ref{lemma:concentration-synt-alt}. 
By the union bound, and since the~$\Omega^{(0)}_{\tau,\lambda}$ 
have probability greater than $1-3\e^{-\lambda\dtau}$,
\[\proba{\Omega}\geq 1-3\sum_{\tau\in\T_n}\e^{-\lambda\dtau}
\, .\]
The inequality $\proba{\Omega} \geq 1-\e^{-y}$ follows since 
\begin{align*}
\sum_{\tau\in\T_n}\e^{-\lambda\dtau} 
=\sum_{d=1}^{n}\binom{n-1}{d-1}\e^{-\lambda d} 
%\\ 
&= \e^{-\lambda}\left(1+\e^{-\lambda}\right)^{n-1} 
\\ & 
\leq \e^{-\lambda} \exp\left( (n-1) \e^{-\lambda}\right) \\
&= \frac{\e^{-y}}{n \e} \exp\left( \frac{n-1}{n} \e^{-1-y}\right) \\
&\leq  \e^{-y} \frac{\exp(\e^{-1})}{n \e} 
\leq  0.27 \e^{-y} 
\, , 
\end{align*}
where the last inequality uses that $n \geq 2$. 
From now on we work exclusively on~$\Omega$.

\paragraph{Key argument}
We now make the simple (but crucial) observation that 
$\crit(\taustar) \geq \crit(\tauhat)$, 
hence 
\[ 
n \pen(\tauhat) + \psi_{\tauhat} 
\leq n \pen(\taustar) + \psi_{\taustar} 
\leq n \pen(\taustar) 
= C D_{\taustar}  M^2 
\, . 
\]
Since we work on $\Omega$, 
by definition of $\Omega^{(0)}_{\tau,\lambda}$ 
in Lemma~\ref{lemma:concentration-synt-alt}, 
for any $\tau \in \T_n$, we have: 
\begin{equation} 
\notag %%\label{eq.thm.bounded.pr.key.2} 
\psi_{\tau} 
\geq \frac{1}{3}\A_{\tau}-\frac{74}{3}\lambda\dtau M^2 
\, . 
\end{equation}
Therefore, we get: 
\begin{equation}
\label{eq.thm.bounded.pr.key}
C D_{\taustar} M^2 
\geq 
\frac{1}{3}\A_{\tauhat} + \left( C -\frac{74}{3}\lambda \right) D_{\tauhat} M^2 
\, . 
\end{equation}

\paragraph{Proof that $\dhat \leq \dstar$} 
Since $C>74 \lambda/3$ 
(by the lower bound in assumption~\eqref*{eq:hyp-main-result-synth}), 
$M^2>0$ and $\A_{\tauhat} \geq 0$, 
\eqref[name=Eq.~]{eq.thm.bounded.pr.key} implies that 
\[
D_{\tauhat} 
\leq 
\frac{C}{C -\frac{74}{3}\lambda} D_{\taustar} 
\, . 
\]
The lower bound in assumption~\eqref*{eq:hyp-main-result-synth} 
ensures that 
\[
\frac{C}{C -\frac{74}{3}\lambda} < \frac{D_{\taustar} + 1}{D_{\taustar} }
\]
hence $\dhat \leq \dstar$ on $\Omega$.

\paragraph{Proof that $D_{\tauhat} \geq \dstar$}
Since $C>74 \lambda/3$ 
(by the lower bound in assumption~\eqref*{eq:hyp-main-result-synth}),    
\eqref[name=Eq.~]{eq.thm.bounded.pr.key} implies that $\A_{\tauhat} \leq 3 C D_{\taustar} M^2$.
A direct consequence of~\eqref*{eq:hyp-main-result-synth} 
is that $\A_{\tauhat} < \frac{1}{2}n\lambdainf_{\taustar} \deltainf^2$, hence $D_{\tauhat} \geq \dstar$ by Lemma~\ref{lemma:approx-error-minoration:optimal-bound}.

\paragraph{Loss between $\tauhat$ and $\taustar$}
We have proved that $\dhat = \dstar$ on $\Omega$, 
therefore, \eqref[name=Eq.~]{eq.thm.bounded.pr.key} 
can be rewritten 
\begin{equation}
\label{eq.thm.bounded.maj-approx}
\A_{\tauhat} \leq 74 \lambda \dstar M^2 
\, . 
\end{equation}
By Lemma~\ref{lemma:approx-error-minoration-2} and the definition of $\lambda$, we get 
\begin{equation}
\label{eq.thm.bounded.pr.1}
\min\biggl\{\lambdainf_{\taustar},\frac{1}{n}\dinf(\taustar,\tauhat)\biggr\} 
\leq 
\frac{148 \dstar M^2}{ \deltainf^2} \cdot \frac{y+\log n + 1}{n} 
= \vitThmbounded(y) 
\, . 
\end{equation}
Remark that assumption~\eqref*{eq:hyp-main-result-synth} implies that 
\[
\dfrac{\deltainf^2}{M^2} 
\frac{\lambdainf_{\taustar} }{6 \dstar} n 
> \frac{74}{3}(\dstar+1)(y+\log n + 1)
\]
hence 
\[
\lambdainf_{\taustar}
> (\dstar+1) \frac{148 \dstar M^2}{ \deltainf^2} \cdot \frac{y+\log n + 1}{n}  
> \vitThmbounded(y) 
\, . 
\]
Therefore, \eqref[name=Eq.~]{eq.thm.bounded.pr.1} 
can be simplified into 
\[
\frac{1}{n}\dinf(\taustar,\tauhat)  
\leq 
 \vitThmbounded(y) 
\, . 
\qed
\]

\begin{remark}
\label{rk.thm.bounded.deltan}
The proof of Theorem~\ref{thm.bounded} 
generalizes to $\tauhat$ defined by 
\[ 
\tauhat \in \argmin_{\tau\in\T_n \,/\, \lambdainf_{\tau} \geq \delta_n} 
\bigl\{ \crit(\tau)\bigr\}
\]
instead of~\eqref*{eq:original-problem},  
for any $\delta_n \geq 0$ 
such that $\lambdainf_{\taustar} \geq \delta_n$. 
Indeed, this assumption allows to write $\crit(\taustar) \geq \crit(\tauhat)$ 
in the key argument, 
and the rest of the proof can stay unchanged (with the same event $\Omega$). 
More generally, any constraint can be added in the argmin defining $\tauhat$, 
provided that $\taustar$ satisfies this constraint. 
\end{remark}

\subsection{Proof of Theorem~\ref{th:localization-moment}}
\label{sec:proof-th-localization-moment}

We follow the strategy described at the beginning of Section~\ref{sec:proofs}. 
Throughout the proof, we write $\tauhatmom$ as a shortcut for $\tauhat(\dstar,\delta_n)$. 

\paragraph{Key argument} 
By definition \eqref*{eq:alternative-problem} of $\tauhatmom = \tauhat(\dstar,\delta_n)$, 
since we assume $\lambdainf_{\taustar} \geq \delta_n$, 
\[ 
\emprisk_n(\taustar) \geq \emprisk_n ( \tauhatmom )
\]
hence 
\[
0 
\geq \psi_{\taustar} 
\geq \psi_{\tauhatmom} 
= A_{\tauhatmom} + 2 L_{\tauhatmom} - Q_{\tauhatmom}
\, . 
\]
By Lemma~\ref{lemma:majo-V}, Lemma~\ref{lemma:majo-linear-part} 
and the facts that $D_{\tauhatmom} = \dstar$ 
and $\lambdainf_{\tauhatmom} \geq \delta_n$, 
we get 
\[ 
0 
\geq \psi_{\tauhatmom}
\geq 
A_{\tauhatmom} 
- 12 \dstar^2 \deltasup \maxeps
- \frac{4 \dstar \maxeps^2}{n \delta_n} 
\]
hence, using Lemma~\ref{lemma:approx-error-minoration-2}, 
\begin{equation} 
\label{eq.th:localization-moment.pr.key}
\min\biggl\{\lambdainf_{\taustar},\frac{1}{n}\dinf(\taustar,\tauhatmom)\biggr\}
\leq \frac{24 \dstar^2 \deltasup }{\deltainf^2 } \frac{\maxeps}{n}
+ \frac{8 \dstar }{\deltainf^2 } \frac{\maxeps^2}{n^2 \delta_n } 
\, . 
\end{equation}

\paragraph{Definition of $\Omega_2$} 
We define 
\[ 
\Omega_2 
\defeq \{ M_n \leq y \sqrt{n V} \} 
\, . 
\]
By Lemma~\ref{lemma:kolmogorov-moment}, 
under  Assumption~\ref{assump:bounded-variance}, 
$\proba{ \Omega_2 } \geq 1 - y^{-2}$.

\paragraph{Conclusion} 
By definition of $\Omega_2$, 
\eqref[name=Eq.~]{eq.th:localization-moment.pr.key} implies that on $\Omega_2$: 
\[ 
\min\biggl\{\lambdainf_{\taustar},\frac{1}{n}\dinf(\taustar,\tauhatmom)\biggr\}
\leq 
24 (\dstar)^2  \frac{ \deltasup \sqrt{V} }{ \deltainf^2}  \frac{ y }{ \sqrt{n} }
+ 8 \dstar \frac{ V }{ \deltainf^2 } \frac{ y^2 }{ n \delta_n }
= \vitThmmoment(y,\delta_n) 
\, . 
\]
Since we assume $\vitThmmoment(y,\delta_n)  < \lambdainf_{\taustar}$, 
the result follows. 
\qed

\appendix
\section{Additional notation}

In this appendix are collected a large part of the technical details of the proofs that precede.
Some additional notation used solely in the appendix are introduced below.

We denote by $\lstar_1,\ldots,\lstar_{\dstar}$ the segments of~$\taustar$, 
that is, 
\[
\lstar_i = \set{\taustar_{i-1}+1,\dots,\taustar_i}
\, .
\]
For any segment $\lambda$ of $\tau \in \T_n$, 
we denote by $\mustar_{\lambda}$ the value of~$\mustar_{\tau}$ 
on $\lambda$, 
which does not depend on $\tau$ and is given by \eqref{eq:computation-projection}: 
\begin{equation}
\label{eq:computation-projection.reformulee}
\mustar_{\lambda} 
= \frac{1}{\card{\lambda}} \sum_{j \in \lambda} \mustar_j
\, . 
\end{equation}

\section{Proofs}

\subsection{Proof of Lemma~\ref{lemma:equality-distances}}\label{sec:proof-equality-distances}

\paragraph{Proof of (i)}
We set $D^i\defeq D_{\tau^i}$ for $i\in\{1,2\}$.
Let us show first that $\dinfb(\tau^1,\tau^2) = \dinf(\tau^1,\tau^2)$.
Take any $i\in\bigl\{1,\dots, D^1 - 1\bigr\}$, by the definition of~$\lambdainf_{\tau^1}$,
\[
\abs{\tau_i^1 - \tau_{D^2}^2}=\abs{\tau_i^1 - n} \geq n\lambdainf_{\tau^1} > n\lambdainf_{\tau^1} / 2 \geq n\min\bigl\{\lambdainf_{\tau^1},\lambdainf_{\tau^2}\bigr\}/2 
\, ,
\]
which is greater than $\dinf(\tau^1,\tau^2)$ by assumption.
In the same fashion we can prove that $\abs{\tau_i^1 - \tau_0^2} > \dinf(\tau^1,\tau^2)$.
Hence, for any~$i \in \{1, \ldots, D^1 - 1 \}$,
\[\min_{0\leq j\leq D^2}\abs{\tau_i^1 - \tau_j^2} = \min_{1\leq j\leq D^2 - 1}\abs{\tau_i^1 - \tau_j^2},\]
which proves that $\dinfb(\tau^1,\tau^2) = \dinf(\tau^1,\tau^2)$.

Next, we prove that $D^1=D^2$ and $\dinfD(\tau^1,\tau^2) = \dinf(\tau^1,\tau^2)$.
Define $\phi : \set{1,\dots,D^1-1}\to\set{1,\dots,D^2-1}$ such that
\[
\bigl\{\phi(i)\bigr\} = \argmin_{1\leq j\leq D^2-1}\abs{\tau_i^1-\tau_j^2}
\]
for all $i  \in \{1, \ldots, D^1 - 1 \}$.
This mapping is well-defined: indeed, suppose that~$j,k\in\set{1,\dots,D^2-1}$ both realize the minimum for some $i\in\set{1,\dots,D^1-1}$.
Since we assumed $\frac{1}{n}\dinf(\tau^1,\tau^2)<\min\bigl\{\lambdainf_{\tau^1},\lambdainf_{\tau^2}\bigr\}/2$,
\[\abs{\tau_i^1-\tau_j^2} = \abs{\tau_i^1-\tau_k^2} \leq \dinf(\tau^1,\tau^2) < n\min\bigl\{\lambdainf_{\tau^1},\lambdainf_{\tau^2}\bigr\}/2.\]
By the triangle inequality,
\[\abs{\tau_j^2-\tau_k^2}<n\min\bigl\{\lambdainf_{\tau^1},\lambdainf_{\tau^2}\bigr\}\leq n\lambdainf_{\tau^2},\]
hence $j=k$. 
Next, we show that~$\phi$ is increasing.
Take $i,j\in\set{1,\dots,D^1-1}$ such that $i<j$.
Recall that~$\tau^k_{\cdot}$ is increasing ($k=1,2$).
Then
\begin{align*}
\tau_{\phi(i)}^2 - \tau_{\phi(j)}^2 &= \tau_{\phi(i)}^2-\tau_i^1 + \tau_i^1-\tau_j^1+\tau_j^1-\tau_{\phi(j)}^2 \\
&= \tau_{\phi(i)}^2-\tau_i^1 - \abs{\tau_i^1-\tau_j^1}+\tau_j^1-\tau_{\phi(j)}^2 \\
&\leq \abs{\tau_{\phi(i)}^2-\tau_i^1} - \abs{\tau_i^1-\tau_j^1}+\abs{\tau_j^1-\tau_{\phi(j)}^2} \\
& \leq 2\dinf(\tau^1,\tau^2) -\abs{\tau_i^1-\tau_j^1}\\
&< n\min\bigl\{\lambdainf_{\tau^1},\lambdainf_{\tau^2}\bigr\} - n\lambdainf_{\tau^1} \leq 0
\, .
\end{align*}
Hence $\phi(i) < \phi(j)$, so $\phi$ is increasing. 
As a consequence, $\phi$ is injective and we get $D^1 \leq D^2$. 
The same argument, exchanging $\tau^1$ and $\tau^2$, shows that $D^2 \leq D^1$. 
Therefore, $D^1 = D^2$ and $\phi$ is an increasing permutation of $\set{1,\dots,D^1-1}$, hence it is the identity.
As a consequence, $\dinfD(\tau^1,\tau^2) = \dinf(\tau^1,\tau^2)$.

Finally, since~$\dinfD$ is symmetric, $\distinf^{(i)}(\tau^1,\tau^2)=\disthaus^{(i)}(\tau^1,\tau^2)$ for any $i\in\set{1,2,3}$.

\paragraph{Proof of (ii)} 
Since $D_{\tau^1}=D_{\tau^2}$, we can set $D=D_{\tau^1}=D_{\tau^2}$. 
Next, define $\phi(i)\defeq \argmin_{1\leq j\leq D-1}\abs{\tau_i^1-\tau_j^2}$ and 
$C_{\phi}(i)\defeq \card{\phi(i)}$ for all $i  \in \{1, \ldots, D - 1 \}$.
Clearly,~$C_{\phi} (i)\geq 1$ for any~$i$.
Let us show that we actually have $C_{\phi} (i)=1$.

Take $i$ and $j$ distincts elements of $\set{1,\ldots,D-1}$, 
and suppose that $\phi(i)\cap\phi(j)$ is non-empty.
Let $k$ be any element of $\phi(i)\cap\phi(j)$. 
By the triangle inequality and the definition of~$\dinf$,
\[
n\lambdainf_{\tau^1} 
\leq \abs{\tau_i^1-\tau_{j}^1} 
\leq \abs{\tau_i^1-\tau_k^2} + \abs{\tau_k^2-\tau_{j}^1} 
\leq 2 \dinf (\tau^1, \tau^2) 
< n\lambdainf_{\tau^1}
\, .
\]
Hence, the $\phi(i)$ are disjoint and we can write
$\sum_{i=1}^{D-1} C_{\phi} (i) = D-1$, which clearly implies that $C_{\phi} (i)=1$.

From now on, we identify~$\phi(i)$ with its unique element. 
Let us show that~$\phi$ is increasing similarily to what we have done 
for proving (i). 
Take $i,j\in\set{1,\dots,D-1}$ such that $i<j$.
We showed that
\[
\tau_{\phi(i)}^2 - \tau_{\phi(j)}^2 \leq 2\dinf(\tau^1,\tau^2) -\abs{\tau_i^1-\tau_j^1},
\]
thus according to the definition of~$\lambdainf_{\tau^1}$, and our assumption,
\[\tau_{\phi(i)}^2 - \tau_{\phi(j)}^2< n\lambdainf_{\tau^1} - n\lambdainf_{\tau^1} \leq 0.\]
Hence $\phi(i) < \phi(j)$: $\phi$ is increasing. 
As a consequence, 
\[
\dinf(\tau^1,\tau^2) = \dinf(\tau^2,\tau^1)=\dinfH(\tau^1,\tau^2)
\, .
\]
\qed

\subsection{The Frobenius loss}
\label{sec:proof-prop-equivalence}

\subsubsection{A formula for $\distfrob^2$} 
We start by proving a general formula for $\distfrob$, 
which is stated by \citet{Laj_Arl_Bac:2014}, we prove it here for completeness: 
\begin{equation}
\label{eq.dF.formule-exacte}
\forall \tau^1,\tau^2 \in \T_n, \qquad 
\distfrob(\tau^1,\tau^2)^2 
=  D_{\tau^1}+D_{\tau^2} 
- 2 \sum_{k=1}^{D_{\tau^1}} \sum_{\ell=1}^{D_{\tau^2}} 
\frac{\abs{\lambda^1_k\cap\lambda^2_{\ell}}^2}{\abs{\lambda^1_k}\times\abs{\lambda^2_{\ell}}} 
\, .
\end{equation}
Indeed, by definition, we have
\begin{gather*}
\distfrob(\tau^1,\tau^2)^2 
= 
\Tr\bigl( (\Pi_{\tau^1} - \Pi_{\tau^2})^2 \bigr)
= 
\underbrace{\Tr(\Pi_{\tau^1})}_{=D_{\tau^1}} + \underbrace{\Tr(\Pi_{\tau^2})}_{=D_{\tau^2}} 
- 2 \Tr( \Pi_{\tau^1} \Pi_{\tau^2} )
\\ 
\text{and}\-
\Tr( \Pi_{\tau^1} \Pi_{\tau^2} )
= 
\sum_{i=1}^n \sum_{j=1}^n \frac{\indic{\lambda_1(i)=\lambda_1(j) \text{ and } \lambda_2(i)=\lambda_2(j)}}{\abs{\lambda_1(i)} \abs{\lambda_2(i)}}
= 
\sum_{k=1}^{D_{\tau^1}} \sum_{\ell=1}^{D_{\tau^2}} 
\frac{\abs{\lambda^1_k\cap\lambda^2_{\ell}}^2}{\abs{\lambda^1_k}\times\abs{\lambda^2_{\ell}}}
\, , 
\end{gather*}
where we denoted by $\lambda_k(i)$ the segment of $\tau^k$ to which $i \in \{1, \ldots, n\}$ belongs.

\subsubsection{Proof of Eq.~\eqref{eq.distfrob.ineq}}
Eq.~\eqref{eq.distfrob.ineq} is stated by \citet{Laj_Arl_Bac:2014}. 
The upper bound is a straightforward consequence of Eq.~\eqref*{eq.dF.formule-exacte}. 
We prove the lower bound here for completeness. 
We remark that 
\begin{align*}
\sum_{k=1}^{D_{\tau^1}} \sum_{\ell=1}^{D_{\tau^2}} 
\frac{\abs{\lambda^1_k\cap\lambda^2_{\ell}}^2}{\abs{\lambda^1_k}\times\abs{\lambda^2_{\ell}}}
\leq 
\sum_{k=1}^{D_{\tau^1}} \sum_{\ell=1}^{D_{\tau^2}} 
\frac{\abs{\lambda^1_k\cap\lambda^2_{\ell}}}{\abs{\lambda^1_k}}
= 
D_{\tau^1} 
\, , 
\end{align*}
hence 
Eq.~\eqref*{eq.dF.formule-exacte} shows that 
\[
\distfrob(\tau^1,\tau^2)^2 
\geq D_{\tau^2} - D_{\tau^1}
\, . 
\]
The lower bound follows since $\tau^1$ and $\tau^2$ play symmetric roles. 
\qed

\subsubsection{Proof of Proposition~\ref{prop:equivalence}}
Throughout the proof, we write $D = D_{\tau^1} = D_{\tau^2}$, 
$\epsilon = n^{-1} \dinf(\tau^1, \tau^2)$ 
and we denote by 
$(\lambda^1_k)_{1 \leq k \leq D}$ and $(\lambda^2_k)_{1 \leq k \leq D}$ 
the segments of $\tau^1$ and $\tau^2$, respectively.

\paragraph{Preliminary remark} 
Since we assume that $D_{\tau^1} = D_{\tau^2}$ and 
$\frac{1}{n}\dinf(\tau^1,\tau^2)=\epsilon <\lambdainf_{\tau^1}/2$, 
point (ii) in Lemma~\ref{lemma:equality-distances} 
shows that $\dinf(\tau^1,\tau^2) = \dinfH(\tau^1,\tau^2) 
= d_{\infty}^{(3)} (\tau^1,\tau^2)$. 
In other words, for every $k \in \{1, \ldots, D - 1\}$, we have 
$\abs{\tau^1_k - \tau^2_k} \leq n \epsilon$, 
and some $k_0 \in \{1, \ldots, D-1 \}$ exists such that 
$\abs{\tau^1_{k_0} - \tau^2_{k_0}} = n \epsilon$.  
As a consequence, for every $k \in \{1, \ldots, D - 1\}$, 
\begin{equation} 
\label{prop:equivalence:eq.pr.1}
\abs{\card{\lambda_k^1}-\card{\lambda_k^2}} \leq 2n\epsilon
\qquad \text{and} \qquad 
\abs{ \lambda^1_k \cap \lambda^2_k } \geq \abs{ \lambda^1_k } - 2 n \epsilon 
\, . 
\end{equation}

\paragraph{Upper bound for $\distfrob(\tau^1,\tau^2)^2$} 
We focus on the sum appearing in the right-hand side of Eq.~\eqref*{eq.dF.formule-exacte}. 
Using Eq.~\eqref*{prop:equivalence:eq.pr.1}, we get: 
\begin{align*}
\sum_{k=1}^{D} \sum_{\ell=1}^{D} 
\frac{\abs{\lambda^1_k\cap\lambda^2_{\ell}}^2}{\abs{\lambda^1_k}\times\abs{\lambda^2_{\ell}}} 
&\geq 
\sum_{k=1}^D \frac{\abs{\lambda^1_k\cap\lambda^2_k}^2}{\abs{\lambda^1_k}\times\abs{\lambda^2_k}} 
\\
&\geq 
\sum_{k=1}^D \left[ \frac{ \left(\abs{\lambda^1_k}- 2n\epsilon \right)^2 }{ \abs{\lambda^1_k}\times \left(\abs{\lambda^1_k} + 2n\epsilon\right) } \right] 
= \sum_{k=1}^{D} \frac{\left(1-\frac{2n\epsilon}{\abs{\lambda^1_k}}\right)^2}{1+\frac{2n\epsilon}{\abs{\lambda^1_k}}} \\
&\geq \sum_{k=1}^D \left(1-\frac{6n\epsilon}{\abs{\lambda^1_k}}\right) 
\geq D-\frac{6\epsilon D}{\lambdainf_{\tau^1}}
\, ,
\end{align*}
since for any $x \geq 0$, $\frac{(1-x)^2}{1+x} \geq 1-3x$. 
The upper bound follows, using Eq.~\eqref*{eq.dF.formule-exacte}.

\paragraph{Lower bound for $\distfrob(\tau^1,\tau^2)^2$} 
As shown in the preliminary remark, 
there exists some $k_0 \in \{1, \ldots, D-1 \}$ such that 
$\abs{\tau^1_{k_0} - \tau^2_{k_0}} = n \epsilon$. 
First consider the case where $\tau^1_{k_0} < \tau^2_{k_0}$. 
Then, by definition of $\distfrob$ and $\Pi_{\tau}$, we have: 
\begin{align*}
\distfrob(\tau^1,\tau^2)^2
&\defeq \sum_{1 \leq i , j \leq n} (\Pi_{\tau^1}-\Pi_{\tau^2})_{i,j}^2
\\
&\geq \sum_{i \in \lambda^1_{k_0+1} \cap \lambda^2_{k_0}}  
\, \sum_{j \in \lambda^1_{k_0+1} \cap \lambda^2_{k_0+1}} \frac{1}{\abs{ \lambda^1_{k_0+1} }^2} \\
&\phantom{blabla}+ \sum_{i \in \lambda^1_{k_0+1} \cap \lambda^2_{k_0+1}} 
\, \sum_{j \in \lambda^1_{k_0+1} \cap \lambda^2_{k_0}} \frac{1}{\abs{ \lambda^1_{k_0+1} }^2}
\\
&= \frac{ 2 \abs{\lambda^1_{k_0+1} \cap \lambda^2_{k_0}} 
\cdot \abs{\lambda^1_{k_0+1} \cap \lambda^2_{k_0+1}} 
}{ \abs{ \lambda^1_{k_0+1} }^2 }
\, . 
\end{align*}
Now, remark that 
$\abs{\lambda^1_{k_0+1} \cap \lambda^2_{k_0}} = n \epsilon$, 
by the preliminary remark and our assumption $\tau^2_{k_0} > \tau^1_{k_0}$. 
Using also Eq.~\eqref*{prop:equivalence:eq.pr.1}, we get: 
\begin{align*}
\distfrob(\tau^1,\tau^2)^2
\geq \frac{ 2 n \epsilon \bigl( \abs{\lambda^1_{k_0+1}} - 2 n \epsilon \bigr)  
}{ \abs{ \lambda^1_{k_0+1} }^2 }
\geq \frac{ 2 n \epsilon }{ 3 \lambdasup_{\tau^1} }
\, , 
\end{align*}
since 
$\abs{\lambda^1_{k_0+1}} - 2 n \epsilon \geq \abs{\lambda^1_{k_0+1}}/3$ 
and $\abs{\lambda^1_{k_0+1}} \leq \lambdasup_{\tau^1}$.  
When $\tau^1_{k_0} > \tau^2_{k_0}$, we apply the same reasoning, 
restricting the sum over $i,j$ in the definition of $\distfrob$ to 
$i \in \lambda^1_{k_0} \cap \lambda^2_{k_0}$ 
and $j \in \lambda^1_{k_0} \cap \lambda^2_{k_0+1}$ 
(plus its symmetric). 
We obtain the same lower bound, which concludes the proof. 
\qed

\subsection{Lower bounds on the approximation error}

This section provides the proofs of 
Lemmas \ref{lemma:approx-error-minoration:optimal-bound} 
and~\ref{lemma:approx-error-minoration-2}.

\subsubsection{Preliminary lemma}
We start with a lemma useful in the two proofs. 

\begin{lemma}
\label{le.approx-lower}
If a segment $\lambda \subset \{ 1, \ldots, n\}$ intersects only 
two segments of $\taustar$, $\lstar_i$ and~$\lstar_{i+1}$, 
then we have\textup{:} 
\begin{align} 
\label{eq:lemma:approx-error-minoration:claim-1}
\sum_{j\in\lambda} \hilbertnorm{ \mustar_j - \mustar_{\lambda} }^2 
&= \frac{\card{\lambda\cap\lstar_i}\cdot \card{\lambda\cap\lstar_{i+1}}}{ 
\card{\lambda\cap\lstar_i}+\card{\lambda\cap\lstar_{i+1}}} 
\hilbertnorm{\mustar_{\lstar_{i+1}} - \mustar_{\lstar_i}}^2 
\\ 
\label{eq:approx-error-minoration:remark}
\geq &
\left(\frac{\card{\lambda\cap\lstar_i}}{\card{\lstar_i}} 
\wedge\frac{\card{\lambda\cap\lstar_{i+1}}}{\card{\lstar_{i+1}}} \right)
\cdot 
\frac{\card{\lstar_i}\cdot\card{\lstar_{i+1}}}{ 
\card{\lstar_i}+\card{\lstar_{i+1}}} 
\cdot\hilbertnorm{\mustar_{\lstar_{i+1}} - \mustar_{\lstar_i}}^2
\, .
\end{align}
\end{lemma}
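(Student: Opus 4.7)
The plan is to reduce the claim to an elementary computation involving only two constant values of $\mustar$ on $\lambda$. Write $a \defeq \card{\lambda\cap\lstar_i}$ and $b \defeq \card{\lambda\cap\lstar_{i+1}}$, and set $\alpha \defeq \mustar_{\lstar_i}$ and $\beta \defeq \mustar_{\lstar_{i+1}}$ (so that by \eqref{def.taustar}, $\mustar_j = \alpha$ for $j\in\lambda\cap\lstar_i$ and $\mustar_j=\beta$ for $j\in\lambda\cap\lstar_{i+1}$, since $\lambda$ meets no other segment of $\taustar$). By formula \eqref{eq:computation-projection.reformulee} applied to $\lambda$, we then have $\mustar_{\lambda} = (a+b)^{-1}(a\alpha + b\beta)$.

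From this, a direct computation gives $\mustar_j - \mustar_\lambda = b(\alpha-\beta)/(a+b)$ for $j \in \lambda\cap\lstar_i$ and $\mustar_j-\mustar_\lambda = a(\beta-\alpha)/(a+b)$ for $j\in\lambda\cap\lstar_{i+1}$. Summing the squared Hilbert-norms over the $a+b$ indices yields
\[
\sum_{j\in\lambda}\hilbertnorm{\mustar_j-\mustar_\lambda}^2
= \Bigl(\frac{ab^2}{(a+b)^2} + \frac{a^2 b}{(a+b)^2}\Bigr)\hilbertnorm{\alpha-\beta}^2
= \frac{ab}{a+b}\hilbertnorm{\alpha-\beta}^2,
\]
which is exactly \eqref{eq:lemma:approx-error-minoration:claim-1}.

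For \eqref{eq:approx-error-minoration:remark}, set $A \defeq \card{\lstar_i}$ and $B \defeq \card{\lstar_{i+1}}$, and assume without loss of generality that $a/A \leq b/B$ (so the minimum in the stated bound equals $a/A$). The inequality to prove reduces to $\frac{ab}{a+b} \geq \frac{aB}{A+B}$, that is, $b(A+B) \geq B(a+b)$, that is, $bA \geq aB$, which is precisely the assumed inequality $a/A \leq b/B$. The symmetric case is identical.

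No genuine obstacle arises: both parts are essentially one-line calculations, the first being the orthogonal-projection variance decomposition for a two-valued constant vector, and the second an elementary manipulation. The only care needed is to use that $\lambda$ meets no segment of $\taustar$ other than $\lstar_i$ and $\lstar_{i+1}$, which is what allows us to reduce to the two-value situation.
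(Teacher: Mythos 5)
Your proof is correct and follows essentially the same route as the paper: the equality is the same two-value variance computation (the paper writes it as $a\hilbertnorm{\alpha-\mustar_\lambda}^2+b\hilbertnorm{\beta-\mustar_\lambda}^2$ and then evaluates each term, which is your calculation), and your WLOG reduction of the inequality to $bA\geq aB$ is an equivalent elementary rearrangement of the algebraic bound $\frac{abcd}{ab+cd}\geq\min(a,c)\frac{bd}{b+d}$ that the paper invokes. No gaps.
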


\begin{proof}
We first prove Eq.~\eqref*{eq:lemma:approx-error-minoration:claim-1}. 
Since~$\lambda$ only intersects~$\lstar_i$ and~$\lstar_{i+1}$, we have: 
\begin{align} \notag 
\sum_{j\in\lambda} \hilbertnorm{ \mustar_j - \mustar_{\lambda} }^2  
&= \sum_{j\in \lambda\cap\lstar_i} \hilbertnorm{\mustar_j - \mustar_{\lambda}}^2 
+ \sum_{j\in \lambda\cap\lstar_{i+1}} \hilbertnorm{\mustar_j - \mustar_{\lambda}}^2
\\
&= \card{\lambda\cap \lstar_i}\cdot \hilbertnorm{\mustar_{\lstar_i} - \mustar_{\lambda}}^2 
+ \card{\lambda\cap \lstar_{i+1}}\cdot \hilbertnorm{\mustar_{\lstar_{i+1}} - \mustar_{\lambda}}^2 
\, .
\label{claim.minor-approx-err.1.pr:eq-1}
\end{align}
Since $\mustar_{\lambda}$ is given by Eq.~\eqref*{eq:computation-projection.reformulee}, 
we obtain 
\begin{align*}
\hilbertnorm{\mustar_{\lstar_i} - \mustar_{\lambda}}^2 
= \hilbertnorm{\dfrac{1}{\card{\lambda}}\sum_{j\in\lambda}\left (\mustar_{\lstar_i}-\mustar_j\right )}^2 
%%\\ 
&= \hilbertnorm{\dfrac{1}{\card{\lambda}}\sum_{j\in\lambda\cap\lstar_{i+1}}\left (\mustar_{\lstar_i}-\mustar_{\lstar_{i+1}}\right )}^2 
\\
&= \frac{\card{\lambda\cap\lstar_{i+1}}^2}{\card{\lambda}^2} 
\hilbertnorm{ \mustar_{\lstar_{i+1}} - \mustar_{\lstar_i} }^2
\, .
\end{align*}
The same computation on~$\lambda\cap\lstar_{i+1}$ yields
\[
\hilbertnorm{\mustar_{\lstar_{i+1}} - \mustar_{\lambda}}^2 
= \frac{ \card{\lambda\cap\lstar_i}^2 }{\card{\lambda}^2} \hilbertnorm{ \mustar_{\lstar_{i+1}} - \mustar_{\lstar_i} }^2
\, .
\]
Therefore, 
Eq.~\eqref*{claim.minor-approx-err.1.pr:eq-1} 
and the fact that 
$\card{\lambda}=\card{\lambda\cap\lstar_i} + \card{\lambda\cap\lstar_{i+1}}$ 
yield 
Eq.~\eqref*{eq:lemma:approx-error-minoration:claim-1}.  

Now, we remark that for any $a,b,c,d >0$,
\[
\frac{abcd}{ab+cd} = \frac{1}{\frac{ab}{\max(a,c)}+\frac{cd}{\max(a,c)}}\times \min(a,c)\times bd \geq \min(a,c)\frac{bd}{b+d}
\, .
\]
Taking $a=\card{\lambda\cap\lstar_i} / \card{\lstar_i}$, $b=\card{\lstar_i}$, 
$c=\card{\lambda \cap\lstar_{i+1}} /\card{\lstar_{i+1}}$ and $d=\card{\lstar_{i+1}}$, 
we get Eq.~\eqref*{eq:approx-error-minoration:remark}. 
\end{proof}

\subsubsection{Proof of Lemma~\ref{lemma:approx-error-minoration:optimal-bound}}
\label{sec:proof-approx-error}

In fact, we prove a slightly stronger statement.
We show that, for any $n\geq 2$, 
for any $\dstar \in \{2, \ldots, n\}$, 
for any $D \in \{1, \ldots, \dstar - 1\}$ and any $\tau\in\T_n^D$,
\begin{equation}
\label{eq:approx-error-minoration:stronger}
\norm{\mustar-\mustar_{\tau}}^2 
\geq \min_{1 \leq i \leq \dstar - 1 } 
\left\{ \frac{ \card{\lstar_i}\cdot\card{\lstar_{i+1}}}{ \card{\lstar_i} + \card{\lstar_{i+1}}} 
\cdot \hilbertnorm{\mustar_{\lstar_{i+1}}-\mustar_{\lstar_i}}^2\right\}
%%\geq \gammainf\cdot \deltainf^2 
\, . 
\end{equation}
Then, 
\[
\norm{\mustar-\mustar_{\tau}}^2 
\geq \gammainf\cdot \deltainf^2 
\qquad \text{where} \qquad 
\gammainf  
= \left( n \max_{1 \leq i \leq \dstar - 1} \left\{ \frac{1}{\abs{\lstar_i}} 
+ \frac{1}{\abs{\lstar_{i+1}}} \right\} \right)^{-1}
\, . 
\]
Since we always have 
\begin{equation}
\notag 
\lambdainf_{\taustar} 
\geq 
\gammainf 
\geq 
\frac{1}{2} \lambdainf_{\taustar} 
\, , 
\end{equation}
Eq.~\eqref{eq:approx-error-minoration:optimal-bound} follows.

\paragraph{Proof of Eq.~\eqref*{eq:approx-error-minoration:stronger} by induction}
We show by strong induction on $\dstar$ that, for any $\dstar \geq 2$, 
for any $D \in \{ 1, \ldots,  \dstar - 1\} $, any $n \geq \dstar$ and any $\tau \in\T_n^D$, 
Eq.~\eqref*{eq:approx-error-minoration:stronger} holds true. 

First, if $\dstar = 2$, 
the result follows by Eq.~\eqref*{eq:approx-error-minoration:remark} 
in Lemma~\ref{le.approx-lower} 
since we then have $i=1$ and 
\[
\frac{\card{\lambda\cap\lstar_1}}{\card{\lstar_1}}  
= \frac{\card{\lambda\cap\lstar_{2}}}{\card{\lstar_{2}}}
= 1 
\, . 
\]

Suppose now that the result is proved for all $\dstar \in \{2, \ldots, p\}$ 
and consider a change-point problem $(\taustar, \mustar)$ 
with $D_{\taustar} = \dstar = p+1$ and $n \geq p+1$. 
Let $D < p+1$ and some segmentation $\tau \in \T_n^D$ be fixed. 
Then one of these two scenarios occurs: 
(i) there exists $\lstar_i$ with $2\leq i\leq \dstar - 1$ that does not contain any change-point of~$\tau$, or 
(ii) $\lstar_2$,...,$\lstar_{\dstar-1}$ all contain a change-point of~$\tau$. 

\paragraph{Case (i)} 
Suppose that there exists an inner segment~$\lstar_i$ of~$\taustar$, 
$2\leq i\leq \dstar-1$,  
that does not contain any change-point of~$\tau$ 
(see Figure~\ref{fig:approx-error-minoration:case-i}). 
Therefore, there exists  $k \in \{1, \ldots, D \}$ such that 
$\lstar_i \subsetneqq \lambda_{k}$. 
By definition, there are $i-1$ change-points of~$\taustar$ to the left of~$\lstar_i$ 
and~$k-1$ change-points of~$\tau$ to the left of~$\lstar_i$. 
Suppose that $k<i$. 
We define~$\tauzero$ as the segmentation obtained by adding~$\taustar_i$ to~$\tau$ 
(see Figure~\ref{fig:approx-error-minoration:case-i}). 
Then $\norm{\mustar-\mustar_{\tau}}^2 \geq \norm{\mustar-\mustar_{\tauzero}}^2$ because~$\tauzero$ is finer than~$\tau$.
Reducing $\tauzero$ to a segmentation $\redtauzero$ of $\{1,2,\ldots, \taustar_i \}$ 
in~$k$ segments  and~$\taustar$ to a segmentation $\redtaustar$ 
of $\{1,2,\ldots, \taustar_i \}$ in~$i$ segments 
and defining $\redmustar = (\mustar_1, \ldots, \mustar_{\taustar_i}) \in \hilbert^i$, 
we get back to a situation covered by the induction 
since $i\leq \dstar -1$ and $k <i$. 
So, 
\begin{align*}
\norm{\redmustar-\redmustar_{\redtauzero}}^2  
&\geq \inf_{1\leq j\leq i-1}\left\{\frac{\card{\lstar_j}\cdot\card{\lstar_{j+1}}}{\card{\lstar_j} + \card{\lstar_{j+1}}} \cdot \hilbertnorm{\redmustar_{\lstar_{j+1}}-\redmustar_{\lstar_j}}^2\right\} 
\\
&\geq \inf_{1\leq j\leq \dstar-1}\left\{\frac{\card{\lstar_j}\cdot\card{\lstar_{j+1}}}{\card{\lstar_j} + \card{\lstar_{j+1}}} \cdot \hilbertnorm{\mustar_{\lstar_{j+1}}-\mustar_{\lstar_j}}^2\right\}
\end{align*}
and we get the result since 
$ \norm{\mustar-\mustar_{\tauzero}}^2 
\geq \norm{\redmustar-\redmustar_{\redtauzero}}^2  $. 
A symmetric reasonning can be applied if $k\geq i$, 
considering change-points to the right of~$\lstar_i$ and using that 
$D - k + 1 < \dstar - i + 1$ since $D<\dstar$. 

\begin{figure}[ht!]
\centering
\begin{tikzpicture}
\draw (0,0) rectangle (10,1.5) ;
\draw (0,0.5) rectangle (10,1) ;
\draw (4,1) -- (4,2) ;
\draw (5,1.25) node {$\lstar_i$} ;
\draw (5,0.75) node {$\lambda_{k}$} ;
\draw (6,1) -- (6,1.5) ;
\draw (3,-0.5) -- (3,1) ;
\draw (7,1) -- (7,0) ;
\draw (6,0) -- (6,0.5) ;
\draw (-0.5,1.75) node {$\widetilde{\tau}^{\star}$} ;
\draw (-0.5,1.25) node {$\taustar$} ;
\draw (-0.5,0.75) node {$\tau$} ;
\draw (-0.5,0.25) node {$\tauzero$} ;
\draw (-0.5,-0.25) node {$\widetilde{\tau}^{\circ}$} ;
\draw (1.5,0.75) node {$\cdots$} ;
\draw (1.5,1.25) node {$\cdots$} ;
\draw (1.5,0.25) node {$\cdots$} ;
\draw (1.5,1.75) node {$\cdots$} ;
\draw (1.5,-0.25) node {$\cdots$} ;
\draw (8.5,0.75) node {$\cdots$} ;
\draw (8.5,1.25) node {$\cdots$} ;
\draw (8.5,0.25) node {$\cdots$} ;
\draw (0,-0.5) -- (0,0) ;
\draw (0,-0.5) -- (6,-0.5) ;
\draw (6,-0.5) -- (6,0) ;
\draw (0,2) -- (6,2) ;
\draw (0,1.5) -- (0,2) ;
\draw (6,1.5) -- (6,2) ;
\end{tikzpicture}
\caption{\label{fig:approx-error-minoration:case-i}
Proof of Lemma~\ref{lemma:approx-error-minoration:optimal-bound}, Case (i):~$\lstar_i$ is a segment of~$\taustar$ that is included in a segment of~$\tau$. The segmentation~$\tauzero$ is obtained by joining~$\taustar_i$ to the segmentation~$\tau$. 
}
\end{figure}

\paragraph{Case (ii)} 
Suppose that each inner segment of~$\taustar$ contains a change-point of~$\tau$. 
Since there are $\dstar-2$ inner segments of~$\taustar$ and $D-1\leq \dstar-2$ change-points of~$\tau$, 
there is at most (hence exactly) one change-point of~$\tau$ in each inner segment of~$\taustar$.
Then $D=\dstar - 1$ and we are in the situation depicted in Figure~\ref{fig:approx-error-minoration:case-ii}.

\begin{figure}[ht!]
\centering
\begin{tikzpicture}
\draw (0,0) rectangle (10,1) ;
\draw (0,0.5) -- (10,0.5) ;
\draw (-0.5,0.75) node {$\taustar$} ;
\draw (-0.5,0.25) node {$\tau$} ;
\draw (1.25,0.5) -- (1.25,1) ;
\draw (4.0,0.5) -- (4.0,1) ;
\draw (6.5,0.5) -- (6.5,1) ;
\draw (8.7,0.5) -- (8.7,1) ;
\draw (2.5,0.5) -- (2.5,0) ;
\draw (5.9,0.5) -- (5.9,0) ;
\draw (7.5,0.5) -- (7.5,0) ;
\draw (5.3,0.75) node {$\cdots$} ;
\draw (6.8,0.25) node {$\cdots$} ;
\draw (0.6,0.75) node {$\lstar_1$} ;
\draw (2.7,0.75) node {$\lstar_{2}$} ;
\draw (7.7,0.75) node {$\lstar_{D}$} ;
\draw (9.5,0.75) node {$\lstar_{D+1}$} ;
\draw (1.5,0.25) node {$\lambda_1$} ;
\draw (4.6,0.25) node {$\lambda_2$} ;
\draw (8.8,0.25) node {$\lambda_D$} ;
\draw[dotted] (1.25,1) -- (1.25,1.5) ;
\draw[dotted] (2.5,1) -- (2.5,1.5) ;
\draw[dotted] (4.0,1) -- (4.0,1.5) ;
\draw[dotted] (6.5,1) -- (6.5,1.5) ;
\draw[dotted] (7.5,1) -- (7.5,1.5) ;
\draw[dotted] (8.7,1) -- (8.7,1.5) ;
\draw[<->] (1.25,1.5) -- (2.5,1.5) ;
\draw[<->] (2.5,1.5) -- (4.0,1.5) ;
\draw[<->] (6.5,1.5) -- (7.5,1.5) ;
\draw[<->] (7.5,1.5) -- (8.7,1.5) ;
\draw (1.9,1.75) node {$\alpha_2 \card{\lstar_{2}}$} ;
\draw (3.4,1.75) node {$(1-\alpha_2) \card{\lstar_{2}}$} ;
\draw (6.9,1.75) node {$\alpha_{D} \card{\lstar_{D}}$} ;
\draw (8.5,1.75) node {$(1-\alpha_{D})\card{\lstar_{D}}$} ;
\end{tikzpicture}
\caption{\label{fig:approx-error-minoration:case-ii}
Proof of Lemma~\ref{lemma:approx-error-minoration:optimal-bound}, Case (ii): $D=\dstar-1$ and each inner segment of~$\taustar$ contains exactly one change-point of~$\tau$.
}
\end{figure}
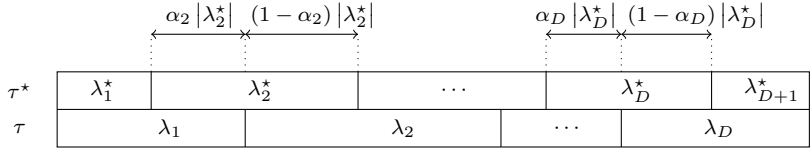

We can use Eq.~\eqref*{eq:approx-error-minoration:remark} 
in Lemma~\ref{le.approx-lower} to lower bound the contribution 
of each $\lambda \in \tau$ to $\norm{\mustar-\mustar_{\tau}}^2$. 
For $2\leq i\leq D=\dstar - 1$, define $\alpha_i\defeq \card{\lstar_i\cap\lambda_{i-1}} / \card{\lstar_i}$. 
Then, we have 
\begin{align*}
\norm{\mustar-\mustar_{\tau}}^2 
&\geq 
(1 \wedge \alpha_2) \frac{\card{\lstar_1}\cdot\card{\lstar_{2}}}{\card{\lstar_1} + \card{\lstar_{2}}} \cdot \hilbertnorm{\mustar_{\lstar_{2}}-\mustar_{\lstar_1}}^2 
\\
&\quad 
+ \sum_{j=2}^{D-1} \left( \bigl[ (1-\alpha_j)\wedge \alpha_{j+1} \bigr] 
\cdot \frac{\card{\lstar_j}\cdot\card{\lstar_{j+1}}}{\card{\lstar_j} + \card{\lstar_{j+1}}} \cdot \hilbertnorm{\mustar_{\lstar_{j+1}}-\mustar_{\lstar_j}}^2 \right) 
\\
&\quad 
+ \bigl[ (1-\alpha_D) \wedge 1 \bigr]
\frac{\card{\lstar_{D}}\cdot\card{\lstar_{D+1}}}{\card{\lstar_D} + \card{\lstar_{D+1}}} \cdot \hilbertnorm{\mustar_{\lstar_{D+1}}-\mustar_{\lstar_D}}^2 
\\
\geq 
\left[1\wedge \alpha_2 \right. & \left. +(1-\alpha_2)\wedge \alpha_3 + \cdots + (1-\alpha_{D-1})\wedge \alpha_D + (1-\alpha_{D})\wedge 1\right] 
\\
&\quad \times \inf_{1\leq j\leq \dstar-1}\left\{\frac{\card{\lstar_j}\cdot\card{\lstar_{j+1}}}{\card{\lstar_j} + \card{\lstar_{j+1}}} \cdot \hilbertnorm{\mustar_{\lstar_{j+1}}-\mustar_{\lstar_j}}^2\right\}
\, .
\end{align*}
Since $\alpha_i\geq 0$ for any $2\leq i\leq \dstar-1$, it is straightforward to show that
\[
\alpha_2 + (1-\alpha_2)\wedge \alpha_3 + \cdots + (1-\alpha_{D})\geq 1
\, , 
\]
which concludes the proof.
\qed

\subsection{Proof of Lemma~\ref{lemma:approx-error-minoration-2}} 
\label{sec:proof-approx-error-2}

Let us define 
$\delta := \min\bigl\{n\lambdainf_{\taustar},\dinf(\taustar,\tau)\bigr\}$. 
If $\delta = 0$, then Eq.~\eqref{eq:approx-error-minoration-2} holds true. 
We assume from now on that $\delta>0$.

Because $n\lambdainf_{\taustar}\geq \delta$, for any $1\leq i\leq \dstar-1$, we can write $\abs{\taustar_{i+1}-\taustar_i} \geq \delta$.
On the other hand, because $\dinf(\taustar,\tau) \geq \delta$, there exists $i\in \{1,\dots,\dstar-1\}$ such that, for any $j\in\{1,\dots, D-1\}$, $\abs{\taustar_i - \tau_j}\geq \delta$.
Since $\delta \leq n\lambdainf_{\taustar}$, this also holds true for $j=0$ and $j=D$.
Let us define, as illustrated by Figure~\ref{fig:construction-mzero-2}, 
\[\lzero \defeq \set{\taustar_i-\delta+1,\dots,\taustar_i,\taustar_i+1,\dots,\taustar_i +\delta}
\, .
\]
\begin{figure}[ht]
\centering
\begin{tikzpicture}
\def\offset{0}
\draw (0,0+\offset) rectangle (10,1.5+\offset) ;
\draw (0,1) -- (10,1) ;
\draw (0,0.5+\offset) -- (10,0.5+\offset) ;
\draw (5,1.5+\offset) -- (5,1+\offset) ;
\draw (8,1.5+\offset) -- (8,1+\offset) ;
\draw (2,1) -- (2,0.5) ;
\draw (1.5,0.73) node {$\bullet$} ;
\draw (2.5,0.73) node {$\times$} ;
\draw (9,0) -- (9,1) ;
\draw (8.5,0.73) node {$\bullet$} ;
\draw (9.5,0.73) node {$\times$} ; 
\draw (0.5,1.23+\offset) node {$\bullet$} ;
\draw (1.5,1.23+\offset) node {$\bullet$} ;
\draw (2.5,1.23+\offset) node {$\bullet$} ;
\draw (3.5,1.23+\offset) node {$\bullet$} ;
\draw (4.5,1.23+\offset) node {$\bullet$} ;
\draw (5.5,1.23+\offset) node {$\times$} ;
\draw (6.5,1.23+\offset) node {$\bullet$} ;
\draw (7.5,1.23+\offset) node {$\bullet$} ;
\draw (8.5,1.23+\offset) node {$\times$} ;
\draw (9.5,1.23+\offset) node {$\bullet$} ;
\draw (2,0+\offset) -- (2,0.5+\offset) ;
\draw (3,0+\offset) -- (3,0.5+\offset) ;
\draw (7,0+\offset) -- (7,0.5+\offset) ;
\draw (-0.5,1.25+\offset) node {$\taustar$} ;
\draw (-0.5,0.75+\offset) node {$\tau$} ;
\draw (5,0.22+\offset) node {$\lzero$} ;
\draw (4.5,1.72+\offset) node {$\taustar_i$} ;
\draw (7.5,1.72) node {$\taustar_{i+1}$} ;
\end{tikzpicture}
\caption{\label{fig:construction-mzero-2} Construction of~$\lzero$ in the proof of Lemma~\ref{lemma:approx-error-minoration-2}. 
In this case, $\delta=2$ since $\lambdainf_{\taustar}=2/10$ 
(the rightmost segment of~$\taustar$ is of size~$2$) 
and $\distinf(\taustar,\tau)=3$ (achieved in~$\taustar_i$).}
\end{figure}

Since $\lzero$ is included in a segment of~$\tau$, 
\[
\norm{\mustar-\mustar_{\tau}}^2 
\geq \sum_{j\in \lzero}\hilbertnorm{ \mustar_j - (\mustar_{\tau})_j }^2 
\geq \sum_{j\in \lzero}\hilbertnorm{ \mustar_j - \mustar_{\lzero} }^2 
\, . 
\]
Because of the hypothesis we made,~$\lzero$ only intersects~$\lstar_i$ and~$\lstar_{i+1}$ 
among the segments of $\taustar$, 
so Eq.~\eqref*{eq:lemma:approx-error-minoration:claim-1}
in Lemma~\ref{le.approx-lower} shows that 
\begin{align*}
\sum_{j\in \lzero}\hilbertnorm{ \mustar_j - \mustar_{\lzero} }^2 
&= \frac{\card{\lzero\cap\lstar_i}\cdot \card{\lzero\cap\lstar_{i+1}}}{\card{\lzero\cap\lstar_i}+\card{\lzero\cap\lstar_{i+1}}} \hilbertnorm{ \mustar_{\lstar_{i+1}} - \mustar_{\lstar_i} }^2 \\
&= \frac{\delta}{2} \hilbertnorm{ \mustar_{\lstar_{i+1}} - \mustar_{\lstar_i} }^2 
\geq \frac{\delta}{2} \deltainf^2 
\, ,
\end{align*}
hence the result. 
\qed

\subsection{Proof of Lemma~\ref{lemma:majo-linear-part}}
\label{sec:proof-lemma-linear}

In this proof, since $\tau$ is fixed, we denote by 
$\lambda_1,\dots,\lambda_D$ the  segments of~$\tau$, 
that is,  $\lambda_i=\set{\tau_{i-1}+1,\dots,\tau_i}$.

First, notice that
\begin{equation}
\label{eq:decomp-linear}
L_{\tau} = \inner{\mustar-\mustar_{\tau}}{\varepsilon} = \sum_{i=1}^{\dstar}\hilbertinner{\mustar_{\lstar_i}}{\sum_{j\in\lstar_i}\varepsilon_j} - \sum_{i=1}^{D_{\tau}}\hilbertinner{\mustar_{\lambda_i}}{\sum_{j\in\lambda_i}\varepsilon_j}
\, .
\end{equation}
Now, if $D_{\tau} < \dstar$ we \emph{arbitrarily} define 
$\lambda_{D_{\tau}+1} = \cdots = \lambda_{\dstar} = \emptyset$, so that 
$\sum_{j\in\lambda_i}\varepsilon_j = 0$ for every $i \in \{ D_{\tau}+1 , \ldots, \dstar \}$. 
Similarly, if $\dstar < D_{\tau}$, we define 
$\lstar_{\dstar+1} = \cdots = \lambda_{D_{\tau}} = \emptyset$. 
We also define $\mustar_{\emptyset} = \mustar_n$ by convention. 
Then, defining $D^+ := \max\bigl\{ \dstar, D_{\tau}\bigr\}$, we can rewrite 
Eq.~\eqref*{eq:decomp-linear} as follows: 
\begin{align}
\notag 
L_{\tau} 
&= \sum_{i=1}^{D^+}\hilbertinner{\mustar_{\lstar_i}}{\sum_{j\in\lstar_i}\varepsilon_j} - \sum_{i=1}^{D^+}\hilbertinner{\mustar_{\lambda_i}}{\sum_{j\in\lambda_i}\varepsilon_j}
\\ \notag 
&= 
\sum_{i=1}^{D^+}\hilbertinner{\mustar_{\lstar_i} - \mustar_{\lambda_i}}{\sum_{j\in\lstar_i}\varepsilon_j}
%\\ \notag &\qquad %\draw (-0.5,0.25) node {$\tauzero$} ;
+ \sum_{i=1}^{D^+}\hilbertinner{\mustar_{\lambda_i}}{\sum_{j\in\lstar_i}\varepsilon_j - \sum_{j\in\lambda_i}\varepsilon_j}
\\ \notag 
&= 
\sum_{i=1}^{D^+}\hilbertinner{\mustar_{\lstar_i} - \mustar_{\lambda_i}}{\sum_{j\in\lstar_i}\varepsilon_j}
%\\ \notag  &\qquad 
+ \sum_{i=1}^{D^+}\hilbertinner{\mustar_{\lambda_i} - \mustar_n}{\sum_{j\in\lstar_i}\varepsilon_j - \sum_{j\in\lambda_i}\varepsilon_j}
\, ,
\end{align}
since 
\[
\sum_{i=1}^{D^+} \Bigl( \sum_{j\in\lstar_i}\varepsilon_j - \sum_{j\in\lambda_i}\varepsilon_j \Bigr) = 0 
\, . 
\]
Then, by the triangle inequality and Cauchy-Schwarz inequality, 
\begin{align*}
\lvert L_{\tau} \rvert 
&\leq 
\sum_{i=1}^{D^+} \hilbertnorm{\mustar_{\lstar_i} - \mustar_{\lambda_i}} \hilbertnorm{\sum_{j\in\lstar_i}\varepsilon_j}
%\\ &\qquad 
+\; \sum_{i=1}^{D^+} \hilbertnorm{\mustar_{\lambda_i} - \mustar_n} \hilbertnorm{\sum_{j\in\lstar_i}\varepsilon_j - \sum_{j\in\lambda_i}\varepsilon_j}
\\
&\leq 
\diam\conv\set{\mustar_j \, / \, j\in\set{1,\ldots,n}}   \\
&\qquad\times \left[ 
\sum_{i=1}^{D^+} \hilbertnorm{\sum_{j\in\lstar_i}\varepsilon_j}
+ 
\;\sum_{i=1}^{D^+} \left( \hilbertnorm{\sum_{j\in\lstar_i}\varepsilon_j} 
+ \hilbertnorm{\sum_{j\in\lambda_i}\varepsilon_j} \right)
\right]
\\
&\leq 
3D^+ \diam\conv\set{\mustar_j \, / \, j\in\set{1,\ldots,n}} 
 \times \sup_{1 \leq a < b \leq n} \hilbertnorm{ \sum_{j=a}^b \varepsilon_j }
\end{align*}
where we used that $\mustar_{\lambda} \in \conv\set{\mustar_j \, / \, j\in\set{1,\ldots,n}}$ 
for any segment $\lambda$. 
Since the diameter of the convex hull of a finite set of points 
is equal to the diameter of the set, we have 
\begin{align*}
\diam\conv\set{\mustar_j \, / \, j\in\set{1,\ldots,n}} 
&= \diam\set{\mustar_j \, / \, j\in\set{1,\ldots,n}} \\
&\leq (\dstar -1) \deltasup < \dstar \deltasup 
\, .
\end{align*}
Using also Lemma~\ref{lemma:partial-sum}, we get the result. 
\qed

\subsection{Proof of Lemma~\ref{lemma:kolmogorov-moment}}\label{sec:proof:lemma:kolmogorov-moment}

Let us put $\zeta\defeq \hilbertnorm{\varepsilon_1+\cdots+\varepsilon_n}^2$. 
Since for any~$j \neq k$, $\expec{\hilbertinner{\varepsilon_j}{\varepsilon_{k}}}=0$ 
(see Remark~\ref{remark:ind}), by definition of~$v_j$,
\begin{align*}
\expec{\zeta} 
= \expec{\hilbertnorm{\varepsilon_1+\cdots+\varepsilon_n}^2}
= \sum_{j=1}^n v_j.
\end{align*}
We recognize the right-hand side of \eqref{eq:kolmogorov-moment} up to~$1/x^2$.
For any $r>1$, let us denote by~$A_r$ the event
\[
\forall 1\leq s < r, \qquad 
\hilbertnorm{\varepsilon_1+\cdots +\varepsilon_s} <x
\qquad\text{and}\qquad 
\hilbertnorm{\varepsilon_1+\cdots+\varepsilon_r}\geq x
\, ,
\]
and by~$A_1$ the event $\hilbertnorm{\varepsilon_1}\geq x$. 
These events are disjoints, thus we can write
\begin{equation} 
\label{lemma:kolmogorov-moment:pr.1} 
\proba{\max_{1\leq k\leq n} \hilbertnorm{\varepsilon_1+\cdots +\varepsilon_k} \geq x} 
= \proba{\bigcup_{r=1}^n A_r} 
= \sum_{r=1}^n\proba{A_r} 
\, . 
\end{equation}
The law of total expectation and the positiveness of~$\zeta$ yield
\[
\expec{\zeta}\geq \sum_{r=1}^n \condexpec{\zeta}{A_r}\proba{A_r}
\, .
\]
Finally, let~$\ell \leq r < k$ be integers.
Since $\varepsilon_{\ell}$ is independent from~$\varepsilon_k$ 
conditionally to~$\sigma(\varepsilon_1,\dots,\varepsilon_r)$, 
$\varepsilon_{\ell}$ is independent from~$\varepsilon_k$ conditionally to~$A_r$.
Furthermore,~$\varepsilon_k$ is independent from~$A_r$ and
\[
\condexpec{\hilbertinner{\varepsilon_k}{\varepsilon_{\ell}}}{A_r} 
= \hilbertinner{\expec{\varepsilon_k}}{\condexpec{\varepsilon_{\ell}}{A_r}} 
= 0
\, .
\]
Because of this relation and the positivity of the (real) conditional expectation, 
for any integers~$r\leq k\leq j$,
\[
\condexpec{ \zeta }{A_r} 
= \condexpec{\hilbertnorm{\varepsilon_1+\cdots+\varepsilon_n}^2}{A_r} 
\geq \condexpec{\hilbertnorm{\varepsilon_1+\cdots+\varepsilon_r}^2}{A_r} 
\geq x^2
\, . 
\]
Therefore, $\condexpec{\zeta}{A_r}\geq x^2$, which gives $\expec{\zeta} \geq x^2\sum \proba{A_r}$.
This concludes the proof, thanks to \eqref[name=Eq.~]{lemma:kolmogorov-moment:pr.1}. 
\qed

\begin{remark}
\label{remark:ind}
The independence between~$\varepsilon_j$ and~$\varepsilon_k$ for $j\neq k$ 
yields $\expec{\hilbertinner{\varepsilon_j}{\varepsilon_k}}=0$.
Indeed, we dispose of a conditional expectation on~$\hilbert$ \citep[chapter~5]{Die_Uhl:1977}, 
which satisfies the same properties than the conditional expectation with real random variables.
Hence we can write
\begin{align*}
\expec{\hilbertinner{\varepsilon_j}{\varepsilon_k}} 
= \expec{\condexpec{\hilbertinner{\varepsilon_j}{\varepsilon_k}}{\varepsilon_k}} 
&= \expec{\hilbertinner{\condexpec{\varepsilon_j}{\varepsilon_k}}{\varepsilon_k}} 
\\
&
= \expec{\hilbertinner{\expec{\varepsilon_j}}{\varepsilon_k}} = 0.
\end{align*}
Note that the~$\varepsilon_j$s expectation vanishes by hypothesis.
\end{remark}

\section*{Acknowledgments}

Damien Garreau PhD scholarship is financed by DGA / Inria.
Sylvain Arlot is also member of the Select project-team of Inria Saclay. 
At the beginning of this work, 
Sylvain Arlot was financed by CNRS and 
member of the Sierra team 
in the D\'epartement d'Informatique de l'\'Ecole normale sup\'erieure 
(CNRS / ENS / Inria UMR 8548), 45 rue d'Ulm, 75005 Paris, France. 
This work was also partly done while Sylvain Arlot 
was supported by Institut des Hautes \'Etudes Scientifiques 
(IHES, Le Bois-Marie, 35, route de Chartres, 91440 Bures-Sur-Yvette, France).
The authors thank Alain Celisse and Aymeric Dieuleveut for helpful discussions.

\bibliographystyle{imsart-nameyear}
\bibliography{learnker}

\end{document}